\newtheorem{thm}[equation]{Theorem}
\newtheorem*{thm*}{Theorem}
\newtheorem{cor}[equation]{Corollary}
\newtheorem{lem}[equation]{Lemma}
\newtheorem{prop}[equation]{Proposition}
\theoremstyle{remark}
\newtheorem{rem}[equation]{Remark}
\theoremstyle{remark}
\newtheorem{exam}{Example}
\numberwithin{equation}{section} 
 \newcommand{\Hom}{\operatorname{Hom}}
\newcommand{\ad}{\operatorname{ad}}
 \newcommand{\Witt}{\operatorname{Witt}}
 \newcommand{\Diff}{\operatorname{Diff}}
 \newcommand{\Vir}{\operatorname{Vir}}
 \newcommand{\ord}{\operatorname{ord}}
 \newcommand{\SGL}{\operatorname{SGL}}
  \newcommand{\sop}{\operatorname{supp}}
 \renewcommand\sl{\mathfrak{sl}}
 \newcommand\g{{\mathfrak{g}}}
\newcommand\Czz{{\mathbb C}[\mkern-.42\thinmuskip[z]\mkern-.45\thinmuskip]}
\newcommand\C{{\mathbb C}}
\newcommand\Z{{\mathbb Z}}
\newcommand\Cz{\C(\!(z)\!)}
\title[Lie subalgebras of Differential Operators]{Lie subalgebras of Differential Operators in one Variable}
\author[F. J. Plaza Mart\'{\i}n]{F. J. Plaza Mart\'{\i}n}
\author[C. Tejero Prieto]{C. Tejero Prieto}
\address{Departamento de Matem\'aticas and IUFFyM, Universidad de
Salamanca,  Plaza de la Merced 1-4
        \\
        37008 Salamanca. Spain.
        \\
         Tel: +34 923294460. Fax: +34 923294583}
\thanks{
       {\it 2010 Mathematics Subject Classification}:  17B68 (Primary) 81R10  (Secondary). \\
\indent {\it Key words}:   Witt algebra, Virasoro algebra, representation theory, differential operators. \\
\indent This work is supported by the research contracts MTM2017-86042-P and
MTM2015-66760-P of MINECO (Spain) and FS/29-2017 Fundación Solórzano.  \\
}
\email{fplaza@usal.es}
\email{carlost@usal.es}
\begin{document}

\begin{abstract} 
Let $\Witt$ be the Lie algebra generated by  the  set $\{L_i\,\vert\, i \in {\mathbb Z}\}$ and $\Vir$ its universal central extension. Let $\Diff(V)$ be the Lie algebra of differential operators on $V=\Cz$, $\Czz$  or $V=\C(z)$. We explicitly describe all Lie algebra homomorphisms from $\sl(2)$, $\Witt$ and $\Vir$ to $\Diff(V)$ such that $L_0$ acts on $V$ as a first order differential operator. 
\end{abstract}

 \maketitle

\section{Introduction}

The study of Lie algebras of differential operators is a crucial step in several problems such as Conformal Field Theory or Gromov-Witten theory. Indeed, the description of the representations of a Lie algebra by vector fields is a classical problem that was first considered by S. Lie himself (\cite{Lie}). A natural generalization of this problem deals with the classification of realizations in terms of differential operators. It is worth pointing out that the explicit expressions of these representations can be a powerful tool to tackle a variety of problems.  Let us give some evidences supporting this claim.

In the case of finite dimensional Lie algebras, a better knowledge of their representations is of great help in a variety of problems, such as, integration of ordinary differential equations, group classification of partial differential equations,  classification  of  gravity  fields  of  a  general  form  under   motion  groups, geometric control theory, Levine's problem, etc. See, for instance,  \cite{Draisma,Kamran-Olver,Popovych,Turbiner-sl, Turbiner}  and the references therein.


Regarding the case of infinite dimensional Lie algebras, during the last decades some Lie algebras (Virasoro, Witt, Krichever-Novikov, etc.) have emerged as highly relevant objects in  mathematics and in mathematical physics. As an illustration, it is enough to mention the role played by the  Virasoro algebra in Conformal Field Theory, Eynard-Orantin topological recursion, Virasoro conjecture, etc. For further properties and applications on this algebra, and with no aim to be exhaustive, let us mention \cite{Iohara-Koga, Kac} as well as \cite{Frenkel,Mulase,Plaza-Opers, Plaza-Sigma,Pope,VirKdV}.

Now, let us be more precise. Let $\Witt$ be the complex Lie algebra generated by $\{L_i\,\vert\, i\in {\mathbb Z}\}$ and whose  bracket is $[L_i, L_j] :=(i-j)L_{i+j}$. Let $\Witt_>$ and $\Witt_<$ be the Lie-subalgebras generated by $\{L_i\,\vert\, i\geq -1\}$ and $\{L_i\,\vert\, i \leq 1\}$ respectively. Let  ${\Vir}$ be the Virasoro algebra; that is, the central extension of the $\Witt$ algebra associated to the cocycle:
	\begin{equation}\label{eq:Vir-co}
	\Psi(L_i,L_j)\, :=\, \frac1{12}(i^3-i) \delta_{i+j,0}\, .
	\end{equation} 
Note that the subalgebra $\langle L_{-1}, L_0,L_1\rangle$ is isomorphic to the Lie algebra $\sl(2)$ and that, by fixing such an isomorphism, we get inclusions:
$$ \begin{tikzcd} &\ \Witt_<\quad \ar[dr,hook] &\\  \sl(2)\ar[ur,hook] \ar[dr,hook] & & \Vir\\ & \ \Witt_>\ar[ur,hook] &  \end{tikzcd} $$
These chains of subalgebras will be intensively used along this paper since a representation of $\Witt_>$ (resp. $\Witt_<$) will be thought of as one of $\sl(2)$ admitting an extension to $\Witt_>$ (resp. $\Witt_<$), and similarly for $\Vir$ with respect to $\Witt_>$ and $\Witt_<$.  Actually, this strategy  is deeply influenced by \cite{Plaza-Tejero} where the question of whether an $\sl(2)$-module admits a compatible structure of $\Witt_>$-module (resp. $\Witt_<$-module) has been solved in full generality.

The main result of this paper is an explicit description and classification of the following set of realizations:
	\[
	\big\{ \rho\in \Hom_{\text{Lie-alg}}({\mathfrak{g}}, \Diff(V)) \text{ s.t. $\rho(L_0)$ is of first order} \big\} \, ,
	\]
for certain choices of ${\mathfrak{g}}$ and $V$; namely, the cases ${\mathfrak{g}}=\sl(2)$, $\Witt_>$, $\Witt_<$, $\Witt$ or $\Vir$ and $V=\C(z), \Czz$ or $ \Cz$ will be exhaustively studied. 

Let us mention in passing that  the constraint that $\rho(L_0)$ is a first order differential operator is granted in many interesting physical applications. For instance, in Conformal Field Theory, the operator $\rho(L_0)$ corresponds to the dilaton equation in string theory which is always of first order (e.g. \cite{Ribault}). On the mathematical side, this constraint on the order also appears in many problems as in the Virasoro constraints and Gromov-Witten theory (e.g. \cite{Givental}), representation theory of $\sl(2)$ (see \cite{Block, Turbiner, Turbiner-sl}) and of ${\mathcal W}_{\infty}$-algebras (\cite{Frenkel, Pope}) and the study of the KP hierarchy (\cite{Mulase, Plaza-Sigma}). Moreover, the fact that $\rho(L_0)$ is of first order is also relevant in the finite dimensional situation; it is worth mentioning the case of Levine's problem (\cite{Kamran-Olver}).

Let us explain the contents of the paper. 

After proving some technical facts in \S2, in \S\ref{sec:expre} we focus on the case of representations of $\sl(2)$ in $\Diff(V)$ for $V=\C(z), \Czz, \Cz$ where $L_0$ is realized as a first order differential operator. All these representations are completely classified and explicit expressions are provided (see Theorem~\ref{t:repr-sl2} for the cases $\C(z), \Cz$ and Theorems~\ref{t:sl2-C[[z]]} and~\ref{t:sl2-line} for $ \Czz$). Let us also mention two remarkable facts about these representations. The first one is that the Casimir operator of any representation of this type acts by a constant. The second one is that, through the study of the enveloping algebra, our results on subalgebras of differential operators are closely related to Block's original approach to irreducible $\sl(2)$-modules by means of differential operators \cite{Block} and the interpretation given by Bavula \cite{Bavula}. 

Once the case of $\sl(2)$ has been solved, in \S\ref{sec:Witt}  we address the problem for the Lie algebras $\Witt_>$, $\Witt_<$, $\Witt$ and $\Vir$. Again, Theorem~\ref{t:Witt>} solves  completely the question for the vector spaces $\C(z), \Cz$ and Theorem~\ref{t:Witt>-line} for $\Witt_>$ and $ \Czz$.  It is also studied how these representations restrict to $\sl(2)$. In particular, our results of \S\ref{sec:expre}-\S\ref{sec:Witt} unveil the intimate connection between the representation theory of $\sl(2)$ and that of ${\mathfrak g}=\Witt_>, \Witt_<,\Witt,\Vir$.  Note that all results stated for $\Witt_>$ and $\C[[z]]$ are also valid for $\Witt_<$ via the Chevalley involution.

%

The last section, \S\ref{sec:remarks}, begins with some facts on the universal enveloping algebra of certain representations. Then, we  succinctly recall several instances of representations of $\sl(2)$ in terms of differential operators, already present in the literature. It will be shown that all of these instances fit into our setup (\cite{Block, Givental, Iohara-Koga, Mulase,Ribault, Turbiner}).


As a future research, it would also be interesting to obtain explicit representations of $\Witt_>$ or $\Witt_<$ by means of the theory of irreducible representations of $\sl(2)$, see \cite{Block, JAlg}. On the other hand, it is reasonable to expect some new results in  the study of the representation theory of the algebra ${\mathcal D}$ of differential operators on the circle and its central extension $\widehat{\mathcal D}$ (a.k.a. ${\mathcal W}_{1+\infty}$) as a consequence of our results of \S\ref{subsec:env-alg-w}. 

\textbf{Acknowlegement:} The authors would like to thank the anonymous referee for the careful reading of the paper and pointing out many typos and some inaccuracies that have helped to greatly improve this work.

%

\section{Symbols and Orders}\label{sec:symbolorder}

In this section, we set  $V=\C(z), \Cz$ and ${\mathfrak{g}}$ is one of the following complex Lie algebras: $\sl(2), \Witt, \Witt_>, \Witt_<$. Recall that there is a standard choice for a basis of $\sl(2)$;  namely, the so-called  Chevalley basis consisting of a triple $\{e,f,h\}$  where $[e,f]=h$, $[h,e]=2e$ and $[h,f]=-2f$. Hence, mapping $f$ to $L_{-1}$, $h$ to $-2L_0$  and $e$ to $-L_1$ yields and identification $\sl(2) =\langle L_{-1}, L_0, L_1\rangle = \Witt_>\cap \Witt_<$; in particular, 
	\[
	\sl(2)=\langle L_{-1}, L_0, L_1\rangle \,\hookrightarrow\, \Witt_>=\langle \{L_i\,\vert\, i\geq -1 \}\rangle\, ,  
	\] 
with Lie bracket $[L_i,L_j]=(i-j)L_{i+j}$ for all $L_i\in{\mathfrak{g}}$.  The support of $\mathfrak g$ is the subset of $\Z$ given by $\sop(\mathfrak g)=\{i\in \Z\colon L_i\in\mathfrak g\}$. 

The Chevalley involution $\Theta\colon \Witt\to \Witt$, defined  by
	\begin{equation}\label{eq:Chevalley}
	\Theta(L_i)=(-1)^{i+1} L_{-i}\, , 
	\end{equation}
defines Lie algebra isomorphisms $\Theta\colon \Witt_>\xrightarrow{\sim}\Witt_<$, $\Theta\colon \Witt_<\xrightarrow{\sim}\Witt_>$ and $\Theta\colon \sl(2)\xrightarrow{\sim} \sl(2)$.

The Casimir of $\sl(2)$, see \cite[Lemma 1.30, page 11]{Mazorchuk}, is the element of its universal enveloping algebra given by the equivalent expressions 
	\begin{align}\label{eq:Casimir1}
	\mathcal C&=(h+1)^2+4fe=4\left(\left(L_0-\frac{1}{2}\right)^2-L_{-1}L_1\right)=\\
	\label{eq:Casimir2}&=(h-1)^2+4ef=4\left(\left(L_0+\frac{1}{2}\right)^2-L_{1}L_{-1}\right).
	\end{align} Therefore, one has $\Theta(\mathcal C)=\mathcal C$.
Let us recall that differential operators $P\in \Diff(V)$ are finite linear combinations $$P=\sum_{j=0}^k \xi_j\, \partial^j$$where $\partial$ denotes $\frac{\partial}{\partial z}$ and $\xi_i\in V$. If $\xi_k\neq 0$ then we say that $P$ has order $k$ and we denote by $\Diff^k(V)\subset \Diff(V)$ the subset of $k$-th order differential operators. The commutator endows $\Diff(V)$ with a Lie algebra structure and one has $[\Diff^k(V),\Diff^l(V)]\subset \Diff^{k+l-1}(V)$.

Assume  that a representation $\rho: {\mathfrak{g}}\to \Diff(V)$ is given. Since ${\mathfrak{g}}$ is simple, it follows that $\rho$ is either zero or injective. Thus, from now on we assume that $\rho$ is injective. We consider the symbol, $a_i\in V$, and the order, $n_i\in {\mathbb Z}$ with $n_i\geq 0$, of the differential operator $\rho(L_i)$; that is,
	\[
	\rho(L_i)\, =\, a_i \partial^{n_i} + \text{lower order terms}\, \in \, \Diff^{n_i}(V), 
	\]
where $a_i\neq 0$. From now on, we define $a'$ to be $\partial a$ where $a\in V$. We denote by $\ord(L)$ the order of a differential operator $L$; and, thus, $n_i:=\ord(\rho(L_i))$.

 A key observation is that the identity $[\rho(L_i), \rho(L_j)]=(i-j)\rho(L_{i+j})$ implies the following relation:
	\begin{equation}\label{e:ci>1}
	n_i+n_j -1 \,\geq \, n_{i+j}, \quad\text{for all}\  i\neq j. 
	\end{equation}

More precisely, we have  the following:
\begin{prop}\label{p:fundamental}
For $i \neq j$, the following  conditions are equivalent:
\begin{enumerate}
	\item $n_i+n_j -1> n_{i+j}$, 
	\item the coefficient of $\partial^{n_i+n_j -1}$ of $[\rho(L_i), \rho(L_j)]$ vanishes, that is $$n_i a_i a'_j- n_j a_j a'_i =0.$$
	\item  $a_i^{n_j}$ is equal to $a_j^{n_i}$ up to a non zero multiplicative constant.
\end{enumerate}
\end{prop}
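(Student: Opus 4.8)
The plan is to establish the equivalences $(1)\Leftrightarrow(2)$ and $(2)\Leftrightarrow(3)$ separately. The first rests on an explicit computation of the two top-order coefficients of the commutator $[\rho(L_i),\rho(L_j)]$, whereas the second is a purely algebraic statement about $V$ that I would settle by means of logarithmic derivatives.

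For $(1)\Leftrightarrow(2)$, I would compute the two leading terms of the product $\rho(L_i)\rho(L_j)$ using the Leibniz rule in the form $\partial^{n_i}\circ a_j=a_j\partial^{n_i}+n_i a_j'\,\partial^{n_i-1}+\cdots$. Writing $\rho(L_i)=a_i\partial^{n_i}+\cdots$ and $\rho(L_j)=a_j\partial^{n_j}+\cdots$, one finds
\[\rho(L_i)\rho(L_j)=a_ia_j\,\partial^{n_i+n_j}+n_i a_i a_j'\,\partial^{n_i+n_j-1}+\cdots,\]
and, symmetrically, $\rho(L_j)\rho(L_i)=a_ja_i\,\partial^{n_i+n_j}+n_j a_j a_i'\,\partial^{n_i+n_j-1}+\cdots$. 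Subtracting, the $\partial^{n_i+n_j}$ term cancels---this is precisely the source of the general order bound \eqref{e:ci>1}---and the coefficient of $\partial^{n_i+n_j-1}$ in $[\rho(L_i),\rho(L_j)]$ equals $n_i a_i a_j'-n_j a_j a_i'$. Now I would invoke the relation $[\rho(L_i),\rho(L_j)]=(i-j)\rho(L_{i+j})$: since $i\neq j$ and $a_{i+j}\neq 0$, its right-hand side has order exactly $n_{i+j}$, with nonzero leading coefficient $(i-j)a_{i+j}$. Hence the commutator has order strictly below $n_i+n_j-1$ if and only if the coefficient of $\partial^{n_i+n_j-1}$ vanishes, which is exactly the assertion $(1)\Leftrightarrow(2)$.

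For $(2)\Leftrightarrow(3)$, I would use that $V=\C(z)$ or $V=\Cz$ is a field and that $a_i,a_j\neq 0$ because $\rho$ is injective. Dividing the identity $n_i a_i a_j'=n_j a_j a_i'$ by $a_ia_j$ rewrites $(2)$ as the equality of logarithmic derivatives $n_i\,a_j'/a_j=n_j\,a_i'/a_i$; recognizing $n_i\,a_j'/a_j=(a_j^{n_i})'/a_j^{n_i}$ and likewise for the other side, this reads $(a_j^{n_i})'/a_j^{n_i}=(a_i^{n_j})'/a_i^{n_j}$, i.e. $\bigl(a_i^{n_j}/a_j^{n_i}\bigr)'=0$. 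The concluding step is that the kernel of $\partial$ on $V$ is exactly the constants $\C$, so a vanishing derivative forces $a_i^{n_j}/a_j^{n_i}$ to be a nonzero constant, which is condition $(3)$; conversely, differentiating a relation $a_i^{n_j}=c\,a_j^{n_i}$ with $c\in\C^\times$ returns $(2)$. Possibly vanishing exponents cause no trouble, since $a^0=1$ and all the identities above persist.

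The Leibniz expansion is entirely routine, so the only genuinely delicate point is the last one: the argument for $(2)\Leftrightarrow(3)$ requires $V$ to be a field---so that one may divide by the nonzero elements $a_i,a_j$---whose field of constants is $\C$, so that a zero derivative indeed means a constant. Both properties hold for $V\in\{\C(z),\Cz\}$, and this is exactly where the hypothesis of the present section enters; the power-series ring $\Czz$, not being a field, is excluded here.
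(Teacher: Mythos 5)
Your overall route coincides with the paper's: compute the coefficient of $\partial^{n_i+n_j-1}$ in the commutator as $n_ia_ia_j'-n_ja_ja_i'$, compare with the exact order $n_{i+j}$ of $(i-j)\rho(L_{i+j})$ to get $(1)\Leftrightarrow(2)$, and integrate the logarithmic-derivative identity (using that $V$ is a field whose constants are $\C$) to get $(2)\Leftrightarrow(3)$. The paper's proof is much terser, but it contains one step that your proof is genuinely missing.

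You never treat the case $n_i=n_j=0$, and your closing claim that ``possibly vanishing exponents cause no trouble'' is false precisely there. If both orders vanish, then $\rho(L_i)$ and $\rho(L_j)$ are multiplication operators, so conditions $(2)$ and $(3)$ hold trivially ($n_ia_ia_j'-n_ja_ja_i'=0$ identically, and $a_i^{n_j}=1=a_j^{n_i}$), while condition $(1)$ reads $-1>n_{i+j}\geq 0$, which is false; so the asserted equivalence would break down. Moreover, your argument for $(1)\Leftrightarrow(2)$ does not even parse in this case: there is no ``coefficient of $\partial^{-1}$,'' and the commutator of two multiplication operators is the zero operator, which has no order. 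The proposition survives only because this case cannot occur: two multiplication operators commute, so $\rho(L_{i+j})=\frac{1}{i-j}[\rho(L_i),\rho(L_j)]=0$, contradicting the standing assumption that $\rho$ is injective. This is exactly the first case of the paper's proof, and your argument needs it (it is the only place where injectivity is indispensable, rather than merely guaranteeing $a_i,a_j,a_{i+j}\neq 0$). A second, minor point: your displayed expansion of the product $\rho(L_i)\rho(L_j)$ omits the contributions $a_ib_j+b_ia_j$ of the subleading coefficients $b_i,b_j$ of $\rho(L_i),\rho(L_j)$ to the $\partial^{n_i+n_j-1}$ term; these terms are symmetric in $(i,j)$ and cancel upon subtraction, so your formula for the commutator is correct, but the intermediate formula as written is not.
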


\begin{proof}
Having in mind~\eqref{e:ci>1}, we  consider three cases. First, if $n_i=n_j=0$, then $\rho(L_{i+j})=0$ and this is a contradiction since $\rho$ is injective. Second, if $n_i,n_j>0$, then  the coefficient of $\partial^{n_i+n_j -1}   $ in $[\rho(L_i), \rho(L_j)]$ is given by  $n_i a_i a'_j- n_j a_j a'_i $,  thus the conclusion follows easily.  The remaining case, $n_i=0,n_j>0$ is similar.
\end{proof}

\begin{cor}\label{cor:fundamental}
For $i \neq j$, the following  conditions are equivalent:
\begin{enumerate}
	\item $n_i+n_j -1= n_{i+j}$, 
	\item the coefficient of $\partial^{n_i+n_j -1}$ of $[\rho(L_i), \rho(L_j)]$ coincides with the symbol of $(i-j)\rho(L_{i+j})$, that is $$n_i a_i a'_j- n_j a_j a'_i =(i-j)a_{i+j}.$$
	\item  $n_i a_i a'_j- n_j a'_i a_j\neq 0.$
\end{enumerate}
\end{cor}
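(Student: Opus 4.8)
The plan is to deduce this corollary directly from Proposition~\ref{p:fundamental}, reading condition (1) here as the exact complement of condition (1) there. The starting point is an observation already made in the proof of the proposition: the coefficient of $\partial^{n_i+n_j-1}$ in the commutator $[\rho(L_i),\rho(L_j)]$ equals $c:=n_i a_i a'_j- n_j a_j a'_i$ in every admissible configuration of orders (including the mixed case $n_i=0<n_j$, where one of the two terms simply drops out). I would first record this fact so that it is available throughout the argument.

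Next I would establish the equivalence (1)$\Leftrightarrow$(3). By the basic inequality~\eqref{e:ci>1} we always have $n_i+n_j-1\geq n_{i+j}$, so the equality asserted in (1) is precisely the negation of the strict inequality in condition (1) of Proposition~\ref{p:fundamental}. Since that condition is, by the proposition, equivalent to $c=0$, its negation is exactly $c\neq 0$, which is condition (3). This yields (1)$\Leftrightarrow$(3) at once.

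Finally I would handle (2). Assuming (1), the commutator has order exactly $n_{i+j}=n_i+n_j-1$, so comparing its leading symbol $c$ with the leading symbol of $(i-j)\rho(L_{i+j})$, namely $(i-j)a_{i+j}$ (which has the matching degree because $n_{i+j}=n_i+n_j-1$), gives $c=(i-j)a_{i+j}$, that is, (2). Conversely, since $\rho$ is injective and $i\neq j$ we have $(i-j)a_{i+j}\neq 0$, so (2) forces $c\neq 0$, returning us to (3); combined with (1)$\Leftrightarrow$(3) this closes the chain of implications.

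I do not anticipate any serious obstacle, as the statement is essentially the logical complement of the proposition. The only point requiring some care is verifying that the expression for $c$ as the subleading coefficient of the commutator holds uniformly across all the order configurations permitted by~\eqref{e:ci>1}, which is exactly the short case analysis already carried out in the proof of Proposition~\ref{p:fundamental} and can therefore be invoked rather than repeated.
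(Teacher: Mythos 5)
Your proposal is correct and follows exactly the route the paper intends: the corollary is stated without proof precisely because it is the logical complement of Proposition~\ref{p:fundamental} given the inequality~\eqref{e:ci>1}, which is what you spell out, together with the (immediate) identification of leading coefficients for (2) and the observation that $(i-j)a_{i+j}\neq 0$ by injectivity of $\rho$. Nothing is missing; your care about the mixed case $n_i=0<n_j$ matches the case analysis already done in the proof of the proposition.
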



\begin{prop}\label{p:a_ia_1a_0} 
If $n_0= 1$, then for every $i,j\in\sop(\g)$, one has

  \begin{equation}\label{eq:clave}
	 a_0 a'_i - n_i a_ia'_0  \, =\, (-i) a_i,
	\end{equation}  
\begin{equation}\label{eq:clave2}
a_0(n_ia_ia_j'-n_ja_ja_i')=(in_j-jn_i)a_ia_j,\end{equation}
\begin{equation}\label{eq:clave3}
a_0(ia_ia_j'-ja_ja_i')=(in_j-jn_i)a_ia_ja_0'.
\end{equation}
Moreover,  for every $k\in \sop(\g)$ there exist non vanishing constants $\mu_k \in \C^*$, with $\mu_0=\mu_1=1$, such that  $$a_k=\mu_k a_1^ka_0^{n_k-k n_1}.$$
\end{prop}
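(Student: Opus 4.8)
The plan is to bootstrap everything from the single relation \eqref{eq:clave}, which itself comes from specializing the commutator identity to the index $j=0$. First I would establish \eqref{eq:clave}. For $i=0$ it is the trivial identity $a_0a_0'-n_0a_0a_0'=0=(-0)a_0$, so assume $i\neq 0$. Since $n_0=1$, the order estimate for the pair $(i,0)$ reads $n_i+n_0-1=n_i=n_{i+0}$, so equality always holds in \eqref{e:ci>1}; that is, condition (1) of Corollary~\ref{cor:fundamental} is automatically satisfied. Hence its part (2) gives $n_ia_ia_0'-n_0a_0a_i'=(i-0)a_{i+0}=ia_i$, and using $n_0=1$ together with a rearrangement produces exactly $a_0a_i'-n_ia_ia_0'=(-i)a_i$.

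Next I would deduce \eqref{eq:clave2} and \eqref{eq:clave3} purely algebraically from \eqref{eq:clave}, with no further input. Writing \eqref{eq:clave} for the indices $i$ and $j$ yields $a_0a_i'=n_ia_ia_0'-ia_i$ and $a_0a_j'=n_ja_ja_0'-ja_j$. Substituting these into $a_0(n_ia_ia_j'-n_ja_ja_i')$ and expanding, the two terms proportional to $a_0'$ (both equal to $n_in_ja_ia_ja_0'$) cancel, leaving $(in_j-jn_i)a_ia_j$, which is \eqref{eq:clave2}. Substituting the same relations into $a_0(ia_ia_j'-ja_ja_i')$, the two terms free of $a_0'$ (both equal to $ija_ia_j$) cancel, leaving $(in_j-jn_i)a_ia_ja_0'$, which is \eqref{eq:clave3}.

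Finally, for the product formula I would pass to logarithmic derivatives, which is legitimate because $a_0,a_1\neq 0$ are invertible in the field $V\in\{\C(z),\Cz\}$ and $\ker\partial=\C$. Dividing \eqref{eq:clave} by $a_0a_i$ gives $\tfrac{a_i'}{a_i}-n_i\tfrac{a_0'}{a_0}=-\tfrac{i}{a_0}$ for every $i\in\sop(\g)$; in particular the case $i=1$ reads $\tfrac{a_1'}{a_1}-n_1\tfrac{a_0'}{a_0}=-\tfrac{1}{a_0}$. Multiplying the $i=1$ relation by $k$ and subtracting it from the relation for a general index $k$ eliminates the inhomogeneous term $1/a_0$ and yields $\tfrac{a_k'}{a_k}=k\tfrac{a_1'}{a_1}+(n_k-kn_1)\tfrac{a_0'}{a_0}$. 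This says precisely that $a_k$ and $a_1^ka_0^{n_k-kn_1}$ have the same logarithmic derivative, so the quotient $a_k\,(a_1^ka_0^{n_k-kn_1})^{-1}$ has vanishing derivative and is therefore a constant $\mu_k\in\C$; it is nonzero since $a_k\neq 0$. Evaluating at $k=0$ and $k=1$ forces $\mu_0=\mu_1=1$.

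I expect the only genuinely delicate point to be this last step: one must ensure that forming logarithmic derivatives and then recovering $a_k$ as a product is valid, i.e.\ that the possibly negative exponents $n_k-kn_1$ make sense and that a function with zero derivative is constant. Both are guaranteed here because $V$ is a field in which every nonzero element is invertible and $\ker\partial=\C$. By contrast, the manipulations establishing \eqref{eq:clave}, \eqref{eq:clave2} and \eqref{eq:clave3} are routine once \eqref{eq:clave} is in hand.
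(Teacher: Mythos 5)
Your proof is correct and follows essentially the same route as the paper: \eqref{eq:clave} is obtained from Corollary~\ref{cor:fundamental} applied to the pair $(i,0)$ using $n_0+n_i-1=n_i$, the identities \eqref{eq:clave2} and \eqref{eq:clave3} are linear combinations of two instances of \eqref{eq:clave}, and the product formula follows by integrating the resulting logarithmic-derivative equation. Your only deviations are cosmetic: you derive the equation $\frac{a_k'}{a_k}-k\frac{a_1'}{a_1}-(n_k-kn_1)\frac{a_0'}{a_0}=0$ directly from \eqref{eq:clave} rather than from \eqref{eq:clave3} specialized at $j=1$, and you make explicit the points the paper leaves implicit (the trivial case $i=0$, and that integration is legitimate because $\ker\partial=\C$ in $V$).
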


\begin{proof}
Under the assumption $n_0=1$, it holds that  $n_0+n_k-1= n_k$. Therefore, by Corollary \ref{cor:fundamental} we get the equality:
	$$
	a_0 a'_k - n_k a_k a'_0 \, =\, (-k) a_k.
	$$ 
which is \eqref{eq:clave}. Taking  this equation for $k=j$ multiplied by $n_ia_i$ (resp. $ia_i$) minus the same equation for $k=i$ multiplied by $n_ja_j$ (resp. $ja_j$) we get the equality 
	\begin{align*}
	a_0(n_ia_ia_j'-n_ja_ja_i')=(in_j-jn_i)a_ia_j,
	\\ (\text{resp.}\ a_0(ia_ia_j'-ja_ja_i')=(in_j-jn_i)a_ia_ja_0'). \end{align*} 
and, thus, equations \eqref{eq:clave2} and \eqref{eq:clave3} are proved. 

In a similar way, 
considering equation (\ref{eq:clave3}) for $i=k$, $j=1$ and dividing it by $a_k a_1 a_0$, we obtain the differential equation:
	\[
	 \frac{a'_k}{a_k}- k  \frac{a'_1}{a_1}  - (n_k- k n_1) \frac{a'_0}{a_0}\, =\, 0
	\,. \]
 Integrating it, the last claim follows and clearly  $\mu_0=\mu_1=1$.
\end{proof}

\begin{prop}\label{p:n_i=}
If $n_0= 1$, then $n_i+n_j-1= n_{i+j}$ for  all $i\neq j$ such that $-1\leq i,j\leq 1$.
\end{prop}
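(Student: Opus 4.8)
The plan is to reduce the statement to a single nontrivial case and then dispatch it by combining the vanishing criterion of Proposition~\ref{p:fundamental} with the structural identity~\eqref{eq:clave2}. Since the relation $n_i+n_j-1=n_{i+j}$ is symmetric in $i$ and $j$, it suffices to treat the unordered pairs $\{-1,0\}$, $\{0,1\}$ and $\{-1,1\}$. The first two are immediate from the hypothesis $n_0=1$: for $\{-1,0\}$ one has $n_{-1}+n_0-1=n_{-1}=n_{-1+0}$, and for $\{0,1\}$ one has $n_0+n_1-1=n_1=n_{0+1}$. Thus everything reduces to proving $n_{-1}+n_1-1=n_0$, i.e. $n_{-1}+n_1=2$.

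For this remaining case I would first recall that the inequality $n_{-1}+n_1-1\ge n_0=1$ is exactly~\eqref{e:ci>1} applied to $i=-1$, $j=1$; so the only thing to exclude is the strict inequality $n_{-1}+n_1-1> n_0$. Arguing by contradiction, assume strictness. By the equivalence (1)$\Leftrightarrow$(2) in Proposition~\ref{p:fundamental} with $i=-1$, $j=1$, this forces the vanishing of the top commutator coefficient, namely
	\[
	n_{-1}a_{-1}a_1'-n_1 a_1 a_{-1}'=0\, .
	\]
Now substitute this into identity~\eqref{eq:clave2} evaluated at $i=-1$, $j=1$, where the right-hand factor is $(i n_j-j n_i)=-(n_{-1}+n_1)$. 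The left-hand side then vanishes, giving $(n_{-1}+n_1)\,a_{-1}a_1=0$. Since $\rho$ is injective we have $a_{-1}\ne 0$ and $a_1\ne 0$, hence $n_{-1}+n_1=0$, contradicting $n_{-1}+n_1\ge 2$. Therefore the strict inequality is impossible and $n_{-1}+n_1=2$, completing the argument.

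The only delicate point is making sure the tools I invoke are legitimately available: identity~\eqref{eq:clave2} is derived in Proposition~\ref{p:a_ia_1a_0} precisely under the standing assumption $n_0=1$, which is our hypothesis, and Proposition~\ref{p:fundamental} applies since $i\ne j$. An alternative route, slightly longer, would avoid~\eqref{eq:clave2} altogether: under the strict-inequality assumption, Proposition~\ref{p:fundamental}(3) gives $a_{-1}^{\,n_1}=c\,a_1^{\,n_{-1}}$ up to a constant, and feeding in the explicit formula $a_{-1}=\mu_{-1}a_1^{-1}a_0^{\,n_{-1}+n_1}$ from Proposition~\ref{p:a_ia_1a_0} reduces this to $(a_0^{\,n_1}/a_1)^{n_{-1}+n_1}\in\C^*$; since an element of $\C(z)$ or $\Cz$ whose power is a nonzero constant must itself be constant, one obtains $a_1=c'a_0^{\,n_1}$, which then contradicts the nonvanishing $a_0a_1'-n_1a_1a_0'\ne0$ supplied by Corollary~\ref{cor:fundamental} for the pair $(0,1)$. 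The first argument is cleaner and is the one I would write up; the main, and rather minor, obstacle is simply spotting that~\eqref{eq:clave2} collapses the vanishing condition into the arithmetic constraint $n_{-1}+n_1=0$.
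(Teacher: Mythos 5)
Your proof is correct and follows essentially the same route as the paper's: both reduce to the pair $(i,j)=(-1,1)$ and combine identity~\eqref{eq:clave2} for that pair with the dichotomy of Proposition~\ref{p:fundamental}/Corollary~\ref{cor:fundamental}. The only difference is organizational — the paper argues directly (injectivity of $\rho$ rules out $n_{-1}=n_1=0$, so the right-hand side of~\eqref{eq:clave2} is nonzero and Corollary~\ref{cor:fundamental} gives the equality), whereas you run the contrapositive (strict inequality forces the left-hand side to vanish, hence $n_{-1}+n_1=0$, contradicting~\eqref{e:ci>1}).
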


\begin{proof} Thanks to Corollary \ref{cor:fundamental}, equation (\ref{eq:clave}) implies the claim when either $i=0$ or $j=0$. Hence, we only need to consider the case $i=1$, $j=-1$. Taking equation (\ref{eq:clave2}) for these values, we have $$a_0(n_1a_1a_{-1}'-n_{-1}a_{-1}a_1')=a_1a_{-1}(n_{-1}+n_1).$$ Now, $n_{-1}+n_1\neq 0$, because $n_{-1},n_1\geq 0$ and $n_{-1}=n_1=0$ would imply $0=-\frac12[\rho(L_{-1}),\rho(L_1)]=\rho(L_0)$, a contradiction since $\rho$ is injective. Hence $n_1a_1a_{-1}'-n_{-1}a_{-1}a_1'\neq 0$ and the proof is finished invoking Corollary \ref{cor:fundamental}. 

\end{proof}

\begin{prop}\label{p:aih}
If $n_0= 1$, then for every $i\in\sop(\g)$ there exists $h(z)\in V$ with $h'(z)\neq 0$ such that:
	\[
	a_i= \mu_i h(z)^{i+n_i} (-h'(z))^{-n_i} 
	\, . \]
\end{prop}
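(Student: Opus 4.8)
The plan is to reduce everything to the symbols $a_0$ and $a_1$ and then produce $h$ explicitly from them. Proposition~\ref{p:a_ia_1a_0} already gives $a_k=\mu_k a_1^k a_0^{n_k-kn_1}$ for every $k\in\sop(\g)$, so it suffices to rewrite $a_0$ and $a_1$ in the desired shape and read off the general formula. Matching the target expression at $i=0$ (where $n_0=1$, $\mu_0=1$) forces $a_0=h(-h')^{-1}$, which suggests defining $h$ directly in terms of $a_0$ and $a_1$. The key guess I would make is
	\[
	h\,:=\,a_1 a_0^{-n_1}\,,
	\]
which lies in $V$ because $V$ is a field and $a_0,a_1\in V$ with $a_0\neq 0$.

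The first thing I would verify is the differential relation linking $h$ and $a_0$. Differentiating the definition gives $h'=a_0^{-n_1-1}(a_0 a_1'-n_1 a_1 a_0')$, and the factor in parentheses is precisely the left-hand side of equation~\eqref{eq:clave} at $i=1$, which equals $-a_1$. Hence $h'=-a_1 a_0^{-n_1-1}$. In particular $h'\neq 0$, since $a_0,a_1$ are nonzero, and a direct comparison then yields the two base identities
	\[
	a_0\,=\,h(-h')^{-1}\,,\qquad a_1\,=\,h^{1+n_1}(-h')^{-n_1}\,,
	\]
the second following from $a_1=h\,a_0^{n_1}$ together with the first.

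With these two cases in hand, the general formula is mere bookkeeping. Substituting the expressions for $a_0$ and $a_1$ into $a_k=\mu_k a_1^k a_0^{n_k-kn_1}$, the exponent of $h$ collects to $k(1+n_1)+(n_k-kn_1)=k+n_k$ and the exponent of $(-h')$ to $-kn_1-(n_k-kn_1)=-n_k$, giving $a_k=\mu_k h^{k+n_k}(-h')^{-n_k}$, as claimed. I do not expect a genuine obstacle: once $h=a_1 a_0^{-n_1}$ is written down, the only real input is equation~\eqref{eq:clave} at $i=1$, and the rest is elementary manipulation in $V$. The one point deserving a line of care is that each $n_k\geq 0$ is an integer and that $h$ and $-h'$ are nonzero elements of $V$, so all the (possibly negative) integer powers occurring in the formula are unambiguous elements of the field $V$.
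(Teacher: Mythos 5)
Your proof is correct and follows essentially the same route as the paper: both define $h:=a_1 a_0^{-n_1}$, use the relation $a_0a_1'-n_1a_1a_0'=-a_1$ (your equation~\eqref{eq:clave} at $i=1$ is, given $n_0=1$, literally the paper's equation~\eqref{eq:clave2} at $(i,j)=(1,0)$) to get $h'=-h/a_0\neq 0$, and then substitute $a_0=-h/h'$, $a_1=h^{1+n_1}(-h')^{-n_1}$ into the formula of Proposition~\ref{p:a_ia_1a_0}. The exponent bookkeeping matches the paper's conclusion exactly.
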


\begin{proof}
%
%
Let us set $h(z):= \frac{a_1}{a_0^{n_1}} \in V$.  After simplifiying equation (\ref{eq:clave2})  for $i=1$, $j=0$, we get 
	\[
	a_1=n_1 a_1{a_0'}- a_0{a'_1}.
	\]
Computing the derivative of $h(z)$  and using the previous relation we have $$h'(z)=\frac{1}{a_0^{n_1+1}}(a_0 a_1'-n_1a_1 a_0')=-\frac{a_1}{a_0^{n_1+1}}=-\frac{h(z)}{a_0}.$$  This shows that  $h'(z)\neq 0$ and  $a_0=-\frac{h(z)}{h'(z)}$, $a_1=\left( -\frac{h(z)}{h'(z)}\right)^{n_1} h(z)$. Plugging these expressions into the formula for  $a_i$ given in Proposition~\ref{p:a_ia_1a_0}, the conclusion follows. 
\end{proof}

\begin{prop}[Orders]\label{p:c-1es01}
Let $n_0=\ord(L_0)= 1$, then:
	\begin{enumerate}
		\item for ${\mathfrak{g}}=\sl(2)$, the triple $\{n_{-1},n_0,n_1\}$ is equal to one of the following cases: $\{0,1,2\}$, $\{1,1,1\}$ or $\{2,1,0\}$. Therefore, one has $n_i+n_j-1=n_{i+j}$ for every $-1\leq i,j\leq 1$ with $i\neq j$.
		\item for ${\mathfrak{g}}=\Witt_>$, the general term of the sequence $\{n_{-1},n_0,n_1,\ldots\}$ is given either by $n_i=1$ for all $i\geq -1$, or by $n_i=i+1$ for all $i\geq -1$. Therefore, one has $n_i+n_j-1=n_{i+j}$ for every $i,j\geq -1$ with $i\neq j$.
		\item for ${\mathfrak{g}}=\Witt_<$, the general term of the sequence $\{\ldots,n_{-1},n_0,n_1\}$ is given either by $n_i=1$ for all $i\leq 1$, or by $n_i=-i+1$ for all $i\leq 1$. Therefore, one has $n_i+n_j-1=n_{i+j}$ for every $i,j\leq 1$ with $i\neq j$.
		\item for ${\mathfrak{g}}=\Witt$, the sequence $\{\ldots,n_{-1},n_0,n_1,\ldots \}$ is constant and  $n_i=1$ for all $i\in\Z$. Therefore, one has $n_i+n_j-1=n_{i+j}$ for every $i,j\in\Z$ with $i\neq j$.
	\end{enumerate} 
\end{prop}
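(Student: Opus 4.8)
The plan is to first extract from the identities of this section a single arithmetic criterion governing when the inequality \eqref{e:ci>1} is an equality, and then to run a short induction on $\sop(\g)$. The criterion comes from combining \eqref{eq:clave2} with the dichotomy of Proposition~\ref{p:fundamental} and Corollary~\ref{cor:fundamental}. Indeed, by \eqref{eq:clave2} we have $a_0(n_ia_ia_j'-n_ja_ja_i')=(in_j-jn_i)a_ia_j$, and since $a_0,a_i,a_j$ are nonzero elements of the field $V$, the quantity $n_ia_ia_j'-n_ja_ja_i'$ vanishes if and only if $in_j-jn_i=0$. Comparing with Corollary~\ref{cor:fundamental}(3) this gives, for every $i\neq j$ with $i,j,i+j\in\sop(\g)$,
\begin{equation*}
n_{i+j}=n_i+n_j-1\ \Longleftrightarrow\ in_j\neq jn_i,\qquad n_{i+j}<n_i+n_j-1\ \Longleftrightarrow\ in_j= jn_i,\tag{$\star$}
\end{equation*}
which reduces everything to a combinatorial recursion for the non-negative integers $n_i$.

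For (1) I invoke Proposition~\ref{p:n_i=}: with $(i,j)=(1,-1)$ it forces $n_1+n_{-1}=n_0+1=2$, so that $n_{\pm1}\geq0$ leaves exactly $(n_{-1},n_1)\in\{(0,2),(1,1),(2,0)\}$. For (2) I start from these three options and propagate with $(\star)$ applied to the pair $(k+1,-1)$, whose sum is $k$. First, $(n_{-1},n_1)=(2,0)$ is ruled out: at $(2,-1)$ the factor $2n_{-1}+n_2=4+n_2$ is nonzero, so $(\star)$ gives $n_1=n_2+n_{-1}-1$ and hence $n_2=-1$, impossible. Next, for $(n_{-1},n_1)=(1,1)$ the factor $(k+1)n_{-1}+n_{k+1}=(k+1)+n_{k+1}$ is always positive, so $(\star)$ yields $n_k=n_{k+1}+n_{-1}-1=n_{k+1}$ and induction gives $n_i=1$ for all $i\geq-1$. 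Finally, for $(n_{-1},n_1)=(0,2)$ the factor is just $n_{k+1}$; were it zero, the degenerate branch of $(\star)$ would force $n_k<n_{k+1}+n_{-1}-1=-1$, which is impossible, so it is nonzero and $(\star)$ gives $n_{k+1}=n_k+1$, whence $n_i=i+1$ for all $i\geq-1$. In both surviving cases $in_j-jn_i=i-j\neq0$ for $i\neq j$, so the ``Therefore'' assertion is immediate from $(\star)$.

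Statements (3) and (4) are then formal. For (3) I transport a representation $\rho$ of $\Witt_<$ along the Chevalley involution \eqref{eq:Chevalley}: the composite $\rho\circ\Theta\colon\Witt_>\to\Diff(V)$ is injective, satisfies $(\rho\circ\Theta)(L_0)=-\rho(L_0)$ of first order, and has $\ord((\rho\circ\Theta)(L_i))=n_{-i}$, so part (2) translates term by term into the two sequences claimed for $i\leq1$. For (4) I restrict $\rho$ to $\Witt_>$ and to $\Witt_<$: part (2) gives $n_1\in\{1,2\}$ while part (3) gives $n_1\in\{0,1\}$, forcing $n_1=1$; this selects the constant branch on both sides, so $n_i=1$ for all $i\in\Z$, and the final equality follows once more from $(\star)$.

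The only genuinely substantive step is the extraction of $(\star)$; thereafter the argument is bookkeeping. The point demanding care is the induction in (2): one must notice that the degeneracy $in_j=jn_i$ of the pair $(k+1,-1)$ can actually occur, precisely in the branch $n_{-1}=0$, and verify that in that situation the strict alternative of $(\star)$ always returns a negative order and is therefore excluded.
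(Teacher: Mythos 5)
Your proof is correct, and it follows the paper's route very closely: part (1) via Proposition~\ref{p:n_i=}, part (2) via the recurrence coming from \eqref{eq:clave2} and Corollary~\ref{cor:fundamental} applied to pairs of the form $(i,-1)$, part (3) via the Chevalley involution \eqref{eq:Chevalley}, and part (4) by intersecting the conclusions of (2) and (3). Your criterion $(\star)$ — equality in \eqref{e:ci>1} holds precisely when $in_j\neq jn_i$ — is a clean, explicit packaging of what the paper uses implicitly each time it combines \eqref{eq:clave2} with Proposition~\ref{p:fundamental} and Corollary~\ref{cor:fundamental}. The one genuine point of divergence is how the degenerate alternative is excluded in part (2): the paper first proves a standalone positivity lemma ($n_i\geq 1$ for all $i\geq 0$) by an infinite-descent argument using only the inequality \eqref{e:ci>1}, after which the factor $in_{-1}+n_i$ is automatically nonzero; you instead propagate the trichotomy $(n_{-1},n_1)\in\{(0,2),(1,1),(2,0)\}$ from part (1), killing $(2,0)$ with the pair $(2,-1)$ and disposing of the possible degeneracy $n_{k+1}=0$ in the $(0,2)$ branch by observing that the strict branch of $(\star)$ would then force a negative order. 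Both mechanisms are sound; yours buys a slightly more self-contained induction (no descent lemma needed), while the paper's positivity lemma is more uniform in that it needs no case distinction and no prior knowledge of the admissible pairs $(n_{-1},n_1)$.
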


\begin{proof}
(1) Proposition~\ref{p:n_i=} states that $n_{-1}+n_1-1=n_0$. Since $n_0=1$ and $n_{-1}, n_1\geq 0$, the result follows. 

(2) In this situation, we claim that $n_i\geq 1$ for all $i\geq 0$. Indeed, if  there exists $i\geq 0$ with $n_i=0$. Then, $i\geq 1$ and  \eqref{e:ci>1}  imply that $n_j-1\geq n_{i+j}$ for all $j\geq 0$ with $j\neq i$. Repeating this argument for $i+j, 2i+j,\ldots$, we obtain  $n_j > n_{i+j} > n_{2i+j} > \ldots$; however this is not possible since $n_k\geq 0$ for all $k$.

Hence, the relations $n_{-1}+n_1-1=n_0$ (Proposition~\ref{p:n_i=}), $n_0=1$ and $n_i\geq 1$ for all $i\geq 0$ imply that  there are two cases, either $n_{-1}=0$ or $n_{-1}=1$.

Considering equation (\ref{eq:clave2}) for $j=-1$ we get $$a_0(n_ia_ia_{-1}'-n_{-1}a_{-1}a_i')=a_ia_{-1}(in_{-1}+n_i).$$ Since we have seen that $n_i\geq 1$, it follows that $in_{-1}+n_i\neq 0$ for every $i\geq 0$. Therefore $n_ia_ia_{-1}'-n_{-1}a_{-1}a_i'\neq 0$ and by Corollary \ref{cor:fundamental} we have:
	\[
	n_{i} + n_{-1} -1 \, =\, n_{i-1}\qquad \forall i\geq 0\, . 
	\]
A simple recurrence procedure shows that:
	\[
	n_i \,= \, n_0 + i (1-n_{-1}) \qquad \forall i\geq 0\, . 
	\]
Recalling that $n_0=1$ and that $n_{-1}$ is either equal to $0$ or $1$, the statement is proven. 

(3) We consider  the Chevalley involution \eqref{eq:Chevalley}. If $\rho_<$ is a representation of $\Witt_<$ then $\rho_>:=\rho_<\circ \Theta$ is a representation of $\Witt_>$. The claim follows immediately from part (2) since for every $i\leq 1$ one has $\ord(\rho_<(L_i))=\ord(\rho_>(L_{-i}))$.

(4) Since $\Witt_>$ and $\Witt_>$ are Lie subalgebras of $\Witt$, the claim follows immediately from (2) and (3).
%
%
%
\end{proof}

\begin{thm}[Symbols]\label{t:symbolorder} 
If $n_0=\ord(L_0)= 1$, then there exists $h(z)\in V$ with $h'(z)\neq 0$ such that for every $i\in\sop(\g)$ one has:
\[ 
a_i = h(z)^{i+n_i}(-h'(z))^{-n_i}.
\]
\end{thm}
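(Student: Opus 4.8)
The plan is to compare the desired formula with the one already obtained in Proposition~\ref{p:aih}, namely $a_i = \mu_i\, h^{i+n_i}(-h')^{-n_i}$ with $h = a_1/a_0^{n_1}$ and $\mu_0 = \mu_1 = 1$. Since the two statements differ only by the scalars $\mu_i$, the entire content of the theorem is the claim that $\mu_i = 1$ for every $i \in \sop(\g)$, for this same $h$. I would therefore set $f_i := h^{i+n_i}(-h')^{-n_i}$, so that $a_i = \mu_i f_i$, and aim to prove $\mu_i \equiv 1$.

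The key step is the normalized bracket identity for the $f_i$,
\[
n_i f_i f_j' - n_j f_j f_i' = (i-j)\, f_{i+j} \qquad (i \neq j),
\]
valid in each of the four cases. To establish it I would divide by $f_i f_j$ and use the logarithmic derivative $f_i'/f_i = (i+n_i)\frac{h'}{h} - n_i \frac{h''}{h'}$; the $h''/h'$ contributions cancel, leaving $(j n_i - i n_j)\frac{h'}{h}$. On the other hand, Proposition~\ref{p:c-1es01} guarantees that the equality $n_i + n_j - 1 = n_{i+j}$ holds for all admissible $i \neq j$, whence $f_{i+j} = f_i f_j\,(-h'/h)$; moreover it shows that $n_i$ is affine in $i$ with $n_0 = 1$, so that $i n_j - j n_i = i-j$. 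Combining these two facts makes both sides of the identity agree.

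Once the identity is available, I would substitute $a_i = \mu_i f_i$ into the equality furnished by Corollary~\ref{cor:fundamental}, that is $n_i a_i a_j' - n_j a_j a_i' = (i-j) a_{i+j}$ (legitimate for every admissible pair, since Proposition~\ref{p:c-1es01} ensures we are always in the equality case). Its left-hand side becomes $\mu_i \mu_j \bigl(n_i f_i f_j' - n_j f_j f_i'\bigr) = \mu_i\mu_j (i-j) f_{i+j}$, while its right-hand side is $(i-j)\mu_{i+j} f_{i+j}$. As $i \neq j$ and $f_{i+j} \neq 0$, this yields the multiplicative relation
\[
\mu_i \mu_j = \mu_{i+j} \qquad \text{whenever } i \neq j \text{ and } i+j \in \sop(\g).
\]

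It then remains to bootstrap from $\mu_0 = \mu_1 = 1$. For $\g = \Witt_>$ the pair $(1,-1)$ gives $\mu_{-1} = \mu_0/\mu_1 = 1$, and then $(k,-1)$ with $k \geq 1$ gives $\mu_k = \mu_{k-1}$, so a trivial induction yields $\mu_i = 1$ for all $i \geq -1$; the case $\g = \sl(2)$ is the same computation restricted to $\{-1,0,1\}$; the case $\g = \Witt_<$ follows by the Chevalley involution (or symmetrically from the pairs $(k,1)$); and $\g = \Witt$ combines both directions. I expect the verification of the normalized identity — in particular the observation that the affineness of $n_i$ with $n_0=1$ forces $i n_j - j n_i = i-j$ — to be the main technical point, everything else being bookkeeping with the already established results.
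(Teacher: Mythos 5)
Your proposal is correct and follows essentially the same route as the paper's proof: both reduce the theorem to showing $\mu_i=1$ in Proposition~\ref{p:aih}, derive the multiplicative relation $\mu_{i+j}=\mu_i\mu_j$ from the equality case of Corollary~\ref{cor:fundamental} combined with the explicit orders of Proposition~\ref{p:c-1es01} (the key point being $in_j-jn_i=i-j$), and then bootstrap from $\mu_0=\mu_1=1$. The only differences are organizational: the paper routes the computation through equation~\eqref{eq:clave2} rather than verifying the normalized bracket identity for the $f_i$ directly, and your bootstrap via the pairs $(k,-1)$ is in fact slightly more careful than the paper's one-line chain $\mu_{i+1}=\mu_i$, since it avoids the excluded pair $i=j=1$ when reaching $\mu_2$.
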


\begin{proof} It suffices to show that  $\mu_i=1$ for all $i\geq -1$ in Proposition~\ref{p:aih}. Thanks to   Proposition \ref{p:c-1es01} one has $n_i+n_j-1=n_{i+j}$ and by Corollary \ref{cor:fundamental} this implies the equality $$n_i a_i a'_j- n_j a'_i a_j=(i-j)a_{i+j}.$$ Substituting it into equation (\ref{eq:clave2}) yields the identity $$(i-j)a_0 a_{i+j} =(in_j-jn_i)a_ia_j.$$
Replacing $a_i$, $a_j$ by their expressions obtained in Proposition \ref{p:aih} and taking into account that $a_0=-\frac{h(z)}{h'(z)}$, $n_{i+j}=n_i+n_j-1$, we get  $$(i-j) \mu_{i+j} =(in_j-jn_i)\mu_i\mu_j.$$  

Substituting $n_i=1$ or $n_i=i+1$ or $n_i=-i
+1$ (see Proposition~\ref{p:c-1es01}), it follows the relation $\mu_{i+j}=\mu_i \mu_j$. Recalling that $\mu_1=1$, we have $\mu_{i+1}=\mu_i$ and since $1\in\sop(\g)$, this immediately implies:
	\[
	\mu_i=1\, , \qquad \forall i\in\sop(\g)\, .
	\]
\end{proof}

\begin{prop}\label{p:comm-constant}
Let $n_0=\ord(L_0)= 1$. Let $D\in\Diff(V)$ be such that $[D,\rho(L_i)]=0$ for all $L_i\in{\mathfrak{g}}$, then $D$ is constant. 
\end{prop}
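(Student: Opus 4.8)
The plan is to show that any differential operator $D$ commuting with the whole image $\rho(\g)$ must in fact be a multiplication by a constant. The key structural fact at our disposal is Theorem~\ref{t:symbolorder}, which gives the explicit symbols $a_i = h(z)^{i+n_i}(-h'(z))^{-n_i}$ for a single function $h(z)$ with $h'(z)\neq 0$. In particular, since $1\in\sop(\g)$ in every case under consideration, we have explicit control over $\rho(L_0)$ and $\rho(L_1)$; recalling from Proposition~\ref{p:aih} and its proof that $a_0=-h/h'$, the operator $\rho(L_0)=-\frac{h}{h'}\partial+(\text{order }0)$ is a genuine first-order operator whose symbol never vanishes. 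The natural strategy is therefore to exploit commutation with $\rho(L_0)$ first to pin down the \emph{order} and \emph{symbol} of $D$, and then use commutation with a second generator to force all lower-order coefficients to vanish.

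First I would write $D=\sum_{j=0}^{m}\xi_j\partial^j$ with $\xi_m\neq 0$ and compute the top-order term of $[D,\rho(L_0)]$. Writing $\rho(L_0)=a_0\partial+b_0$ with $a_0=-h/h'$, the bracket $[D,\rho(L_0)]$ has order at most $m$, and its coefficient in $\partial^{m}$ is $a_0\xi_m' - m\,a_0'\,\xi_m$ (up to sign conventions matching the paper's $n_ia_ia_j'-n_ja_ja_i'$ pattern with $n_0=1$). Setting this equal to zero gives a first-order linear ODE $\frac{\xi_m'}{\xi_m}=m\frac{a_0'}{a_0}$, whence $\xi_m = c\,a_0^{m}$ for a constant $c$. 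This is exactly the same integration step used repeatedly in Propositions~\ref{p:a_ia_1a_0} and~\ref{p:aih}, so it is routine. The point is that commutation with $\rho(L_0)$ alone already rigidifies the leading symbol of $D$ to be a constant multiple of a power of $a_0$.

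Next, and this is where the second generator enters, I would use $[D,\rho(L_1)]=0$ to eliminate the possibility that $m>0$ and to kill the remaining coefficients. The cleanest route is a descending induction on the order: having normalized the top symbol via $\rho(L_0)$, one subtracts off a suitable constant multiple of a power of $\rho(L_0)$ (which itself commutes with all of $\rho(\g)$, since $\rho(L_0)\in\rho(\g)$) to reduce $D$ to strictly lower order while preserving the commuting property, then repeats. The base case is a zeroth-order $D=\xi_0$; here $[\xi_0,\rho(L_1)]=0$ with $\rho(L_1)=a_1\partial+\cdots$ forces $a_1\xi_0'=0$, and since $a_1\neq 0$ we get $\xi_0'=0$, i.e. $\xi_0$ is constant.

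I expect the main obstacle to be the inductive reduction step rather than the base case: one must verify that subtracting the correct polynomial in $\rho(L_0)$ genuinely lowers the order, which requires knowing that the leading symbol of $\rho(L_0)^{m}$ is $a_0^{m}$ (true, since $a_0\neq 0$ and the top symbol of a product is the product of top symbols) and matching it against the $c\,a_0^{m}$ computed above. A subtle point worth checking is that $\rho(L_0)^m$ and $D$ have the \emph{same} lower-order corrections only after the subtraction, so the induction must be set up to track orders, not full operators. Once the order is driven to zero the argument closes immediately, giving $D$ constant as claimed.
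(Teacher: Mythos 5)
Your first step is correct and coincides with the paper's: comparing coefficients of $\partial^{m}$ in $[D,\rho(L_0)]=0$ gives $m\,a_0'\xi_m-a_0\xi_m'=0$, hence $\xi_m=c\,a_0^{m}$ with $c\in\C^*$. The gap is in your inductive reduction. You subtract $c\,\rho(L_0)^{m}$ from $D$ and assert that the commuting property is preserved because $\rho(L_0)$ ``commutes with all of $\rho(\mathfrak g)$, since $\rho(L_0)\in\rho(\mathfrak g)$''. Membership in the image does not give commutation with the image: $\mathfrak g$ is non-abelian and $L_0$ is not central, so $[\rho(L_0),\rho(L_i)]=-i\,\rho(L_i)\neq 0$ for every $i\neq 0$ (recall $\rho$ is injective). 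Consequently $[D-c\rho(L_0)^m,\rho(L_1)]=-c\,[\rho(L_0)^m,\rho(L_1)]$, which is nonzero in general; the only hypothesis your subtraction preserves is commutation with $\rho(L_0)$ itself. Run correctly, your descending induction therefore proves only that the centralizer of $\rho(L_0)$ in $\Diff(V)$ is $\C[\rho(L_0)]$, i.e.\ $D=P(\rho(L_0))$ for some polynomial $P$ --- which is not enough, since $\rho(L_0)$ is itself a non-constant element of that centralizer. Your base case (a zeroth-order operator commuting with $\rho(L_1)$ is constant) is fine, but the original $D$ never reaches it with the hypothesis $[D,\rho(L_1)]=0$ intact.

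There are two ways to close the gap. The paper stays at the level of symbols and never descends in order: for each $i$, the vanishing of the top coefficient of $[D,\rho(L_i)]$ integrates to $\xi_m^{\,n_i}=\mu_i a_i^{m}$; combining $i=0$ with $i=1$ and Theorem~\ref{t:symbolorder} shows $h(z)^{-1}=a_0^{n_1}/a_1$ would be constant when $m>0$ (an $m$-th power of an element of $V$ being a nonzero constant forces the element to be constant), contradicting $h'(z)\neq 0$; hence $m=0$, and then $0=[D,\rho(L_0)]=-\tfrac{h}{h'}\partial(D)$ gives $D\in\C$. Alternatively, your operator-level route can be repaired: once the corrected induction yields $D=P(\rho(L_0))$, use $[\rho(L_0),\rho(L_1)]=-\rho(L_1)$, i.e.\ $\rho(L_1)\rho(L_0)=(\rho(L_0)+1)\rho(L_1)$, to compute $[P(\rho(L_0)),\rho(L_1)]=\bigl(P(\rho(L_0))-P(\rho(L_0)+1)\bigr)\rho(L_1)$; since $\Diff(V)$ has no zero divisors and the powers of $\rho(L_0)$ are linearly independent, this forces $P(X)=P(X+1)$ in $\C[X]$, so $P$ is constant. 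Either completion supplies an idea that is missing from the proposal as written.
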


\begin{proof}
Let $D$ be a $n$-th order differential operator with symbol $d$. We have to show that $n=0$ and that $d\in \C$.  Assume that $n>0$. Considering the relation $[D,\rho(L_i)]=0$ and arguing as in the proof of Proposition~\ref{p:fundamental}, we get the identity  $d^{n_i}=\mu_i a_i^n$ with $\mu_i\in \C^*$.  Hence, for $i=0$ we have $d=\mu_0 a_0^n$ and substituting this in the previous identity we obtain  $\mu_0^{n_i}a_0^{nn_i}=\mu_i a_i^n$. Therefore $\left(\frac{a_0^{n_i}}{a_i}\right)^n\in\C^*$ and since $n>0$ it follows that $\frac{a_0^{n_i}}{a_i}\in\C^*$. However, thanks to Theorem \ref{t:symbolorder} one has $\frac{a_0^{n_i}}{a_i}=h(z)^{-i}$. Whence $h(z)\in\C^*$ and $h'(z)=0$ that contradicts Theorem \ref{t:symbolorder}. Thus, $n=0$
%
and $D\in\Diff^0(V)= V$. The relation $0=[D,\rho(L_0)]= -\frac{h(z)}{h'(z)}\partial (D)$ yields the result. 
\end{proof}
\section{Case ${\mathfrak{g}}=\sl(2)$}\label{sec:expre}

We say that an $\sl(2)$-representation $\rho$ is a Casimir representation if the Casimir operator $\mathcal C$  acts by a constant $\rho({\mathcal C})=(2\mu+1)^2$ where $\mu$, called the semi-level of $\rho$, is the unique complex number in the set $$\C_\mathrm{sl}:=\{z\in \C\colon \frak{Re}(\mu)= -\frac12, \Im(\mu)\geq 0\} \coprod \{z\in \C\colon \frak{Re}(\mu)> -\frac12\}.$$

\begin{thm}\label{t:Casimir} Let $V=\C(z), \Cz$. Let us consider the set:
	\[
	{\mathcal S}\,:=\, \big\{ \rho\in \Hom_{\text{Lie-alg}}(\sl(2), \Diff(V)) \text{ s.t. $\rho(L_0)$ is of first order} \big\} 
	\]
Every representation $\rho\in\mathcal S$  is a Casimir representation. 
\end{thm}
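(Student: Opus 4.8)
The plan is to exploit the single structural fact that the Casimir $\mathcal{C}$ lies in the center of the universal enveloping algebra $U(\sl(2))$, and to combine this with the rigidity already established in Proposition~\ref{p:comm-constant}. Observe first that, since $\Diff(V)$ is an associative algebra under composition, the Lie algebra homomorphism $\rho\colon \sl(2)\to\Diff(V)$ extends to an associative algebra homomorphism $U(\sl(2))\to\Diff(V)$. Under this extension $\rho(\mathcal{C})$ is a genuine differential operator, namely $\rho(\mathcal{C})=4\bigl((\rho(L_0)-\tfrac12)^2-\rho(L_{-1})\rho(L_1)\bigr)$ by \eqref{eq:Casimir1}. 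Crucially, no explicit knowledge of the symbols $a_i$ nor of the order triples from Proposition~\ref{p:c-1es01} is required for this step.

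The heart of the argument is then immediate: because $\mathcal{C}$ is central in $U(\sl(2))$ we have $[\mathcal{C},L_i]=0$ for each generator, and hence $[\rho(\mathcal{C}),\rho(L_i)]=\rho([\mathcal{C},L_i])=0$ for all $i\in\{-1,0,1\}$. Since membership $\rho\in\mathcal{S}$ means precisely that $n_0=\ord(\rho(L_0))=1$, the hypothesis of Proposition~\ref{p:comm-constant} is satisfied, so that proposition applies to $D=\rho(\mathcal{C})$ and forces $\rho(\mathcal{C})$ to be a constant $c\in\C$.

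It remains to write $c$ in the normalized form $(2\mu+1)^2$ and thereby pin down the semi-level uniquely. The plan here is to check that $\mu\mapsto(2\mu+1)^2$ restricts to a bijection $\C_\mathrm{sl}\to\C$. Substituting $w=2\mu+1$, the defining conditions translate as $\Re(\mu)=-\tfrac12\Leftrightarrow\Re(w)=0$, $\Re(\mu)>-\tfrac12\Leftrightarrow\Re(w)>0$, and $\Im(\mu)\geq0\Leftrightarrow\Im(w)\geq0$, so that $\C_\mathrm{sl}$ becomes the open right half-plane together with the non-negative imaginary axis. This is exactly a fundamental domain for the two-to-one squaring map $w\mapsto w^2$, whose deck transformation is $w\mapsto -w$; for each value $c$ there is therefore a unique admissible representative, and that representative is by definition the semi-level $\mu$. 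The only points needing care are the boundary cases on the imaginary axis, where the convention $\Im(w)\geq0$ selects the representative, and the fixed point $c=0$ corresponding to $\mu=-\tfrac12$.

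In summary, the difficulty of this statement is entirely conceptual rather than computational: the main obstacle is recognizing that the centrality of $\mathcal{C}$ reduces the whole claim to the commutant computation of Proposition~\ref{p:comm-constant}, after which both the constancy of $\rho(\mathcal{C})$ and the existence and uniqueness of the semi-level follow without any case analysis on the orders or the symbols.
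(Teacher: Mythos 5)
Your proof is correct and follows essentially the same route as the paper: the centrality of $\mathcal{C}$ in $U(\sl(2))$ gives $[\rho(\mathcal{C}),\rho(L_i)]=0$, and Proposition~\ref{p:comm-constant} (applicable since $n_0=1$) then forces $\rho(\mathcal{C})$ to be a constant. Your additional verification that $\mu\mapsto(2\mu+1)^2$ is a bijection from $\C_\mathrm{sl}$ onto $\C$ is a welcome detail that the paper leaves implicit in its definition of the semi-level, but it does not change the substance of the argument.
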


\begin{proof} The representation of the Casimir operator $\rho(\mathcal C)$ commutes with every $\rho(L_i)$ for $-1\leq i\leq 1$. Therefore, thanks to Proposition \ref{p:comm-constant} it follows that $\rho(\mathcal C)$ is a constant.
\end{proof}

The main result of this section is the following Theorem. 

\begin{thm}\label{t:repr-sl2}
	Let $V=\C(z), \Cz$. Let us consider the set:
	\[
	{\mathcal S}\,:=\, \big\{ \rho\in \Hom_{\text{Lie-alg}}(\sl(2), \Diff(V)) \text{ s.t. $\rho(L_0)$ is of first order} \big\} 
	\]
	Then, it holds that ${\mathcal S}= {\mathcal S}_0 \sqcup {\mathcal S}_1 \sqcup {\mathcal S}_2$ where:
	\begin{itemize}
		\item  ${\mathcal S}_i$ consists of those maps such that $\ord(\rho(L_{-1}))=i$ for $i=0,1,2$. 
		\item ${\mathcal S}_0$ is parametrized by the set $\mathcal T_0$ of triples $ (h(z), b(z), c)$  such that  $h(z), b(z) \in V$, $c \in\C_\mathrm{sl}$, with $h'(z)\neq 0$ and $\rho\in\mathcal S_0$ associated to such a triple has semi-level $c$.  The correspondence is  given by:
			\begin{equation}\label{e:rhoexp}
			\begin{split}
			\rho(L_{-1}) \,& =\, h(z)^{-1} \, ,
			\\
			\rho(L_{0}) \,&=\, -\frac{h(z)}{h'(z)} \partial + b(z) \, ,
			\\
			\rho(L_1) \,&=\, 	h(z) \left(\rho(L_0)^2 - \rho(L_0) -c(c+1)\right)\, .
			\end{split}
			\end{equation}
		\item  ${\mathcal S}_1$ is parametrized by  the set $\mathcal T_1$ of triples $(h(z),b(z),c)$ such that  $h(z),b(z)\in V$,  $c \in\C$, with $h'(z)\neq 0$ and $\rho\in\mathcal S_1$ associated to such a triple has semi-level $c$ or $-c-1$.  The correspondence is given by:
		\begin{equation}\label{e:Lihcbsl2}
		\rho(L_i) =
		\frac{-h(z)^{i+1}}{h'(z)} \partial  +
		(b(z)+i \cdot c) \cdot h(z)^i \, .
		\end{equation}
		\item ${\mathcal S}_2$ is parametrized by the set $\mathcal T_2$ of triples $ (h(z), b(z), c)$  such that  $h(z), b(z) \in V$, $c \in\C_\mathrm{sl}$, with $h'(z)\neq 0$ and $\rho\in\mathcal S_0$ associated to such a triple has semi-level $c$.  The correspondence is  given by:
			\[
			\begin{split}
			\rho(L_{-1}) \,& =\, h(z)^{-1} \left(\rho(L_0)^2 + \rho(L_0) -c(c+1)\right)\, ,
			\\
			\rho(L_{0}) \,&=\, -\frac{h(z)}{h'(z)} \partial + b(z) \, ,
			\\
			\rho(L_1) \,&=\, 	h(z)\, .
			\end{split}
			\]
			${\mathcal S}_2$ is in bijection with ${\mathcal S}_0$ by mapping $\rho\in\mathcal S_0$ to $\rho^\Theta=\rho\circ\Theta\in\mathcal S_2$, where $\Theta$ is the Chevalley involution  \eqref{eq:Chevalley}.
	\end{itemize}
\end{thm}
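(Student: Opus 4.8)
The plan is to leverage the structural results of Section 2 to pin down the three order-profiles of $\sl(2)$-representations, and then to verify that each profile is exactly encoded by a triple $(h(z),b(z),c)$. By Proposition~\ref{p:c-1es01}(1), whenever $n_0=\ord(\rho(L_0))=1$ the triple $\{n_{-1},n_0,n_1\}$ must be one of $\{0,1,2\}$, $\{1,1,1\}$, $\{2,1,0\}$, which is precisely the trichotomy $\mathcal S_0\sqcup\mathcal S_1\sqcup\mathcal S_2$ according to the value $\ord(\rho(L_{-1}))\in\{0,1,2\}$. This gives the disjoint decomposition immediately, so the real content is the explicit parametrization of each stratum. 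The plan is to treat $\mathcal S_0$ in detail, deduce $\mathcal S_2$ formally via the Chevalley involution, and handle the genuinely different case $\mathcal S_1$ separately.

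For $\mathcal S_0$ (so $n_{-1}=0$, $n_1=2$), I would first invoke Theorem~\ref{t:symbolorder} to fix the symbols: there exists $h(z)$ with $h'(z)\neq0$ such that $a_{-1}=h(z)^{-1}$, $a_0=-h(z)/h'(z)$, $a_1=h(z)$. Since $\rho(L_{-1})$ has order $0$, it equals its symbol $h(z)^{-1}$, which is the first line of~\eqref{e:rhoexp}. For $\rho(L_0)$, having fixed the symbol $-h(z)/h'(z)$, the only remaining freedom is the zeroth-order term, which I name $b(z)\in V$; this yields the second line. The key relation $[\rho(L_0),\rho(L_{-1})]=\rho(L_{-1})$ should be checked to impose no constraint on $b(z)$ (it is automatically satisfied once the symbol of $\rho(L_0)$ is correct), so $b(z)$ is a free parameter. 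The third line, expressing $\rho(L_1)$ through $\rho(L_0)$, I would derive from the Casimir: by Theorem~\ref{t:Casimir} the Casimir $\mathcal C=4((L_0-\tfrac12)^2-L_{-1}L_1)$ acts by the constant $(2c+1)^2$ where $c$ is the semi-level, and solving $\rho(L_{-1})\rho(L_1)=(\rho(L_0)-\tfrac12)^2-\tfrac14(2c+1)^2$ for $\rho(L_1)$, using $\rho(L_{-1})=h(z)^{-1}$, gives exactly $\rho(L_1)=h(z)(\rho(L_0)^2-\rho(L_0)-c(c+1))$ after simplifying $(c-\tfrac12)^2\cdots$; one checks $(2c+1)^2/4+\tfrac14=c(c+1)+\tfrac12$ type identities. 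The remaining task is the bijection: given any triple, I must verify the three operators so defined genuinely satisfy the $\sl(2)$ relations, the crucial and least automatic being $[\rho(L_1),\rho(L_{-1})]=2\rho(L_0)$ and $[\rho(L_0),\rho(L_1)]=-\rho(L_1)$.

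For $\mathcal S_2$ I would not recompute anything: the map $\rho\mapsto\rho\circ\Theta$ uses $\Theta(L_i)=(-1)^{i+1}L_{-i}$, which swaps the roles of $L_{-1}$ and $L_1$ (up to sign) and fixes $L_0$ up to sign, hence exchanges orders $n_{-1}\leftrightarrow n_1$ and sends $\mathcal S_0$ bijectively to $\mathcal S_2$. Since $\Theta(\mathcal C)=\mathcal C$, the semi-level is preserved, and applying $\Theta$ to the formulas~\eqref{e:rhoexp} produces the stated formulas for $\mathcal S_2$; I would only need to track the signs carefully, particularly that $\Theta(L_0)=-L_0$ forces a sign adjustment so that the middle line of the $\mathcal S_2$-formula again reads $\rho(L_0)=-h(z)/h'(z)\,\partial+b(z)$ rather than its negative. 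The main subtlety here is ensuring the involution lands in the claimed normal form with the correct signs in the $\pm\rho(L_0)$ terms.

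The genuinely separate case is $\mathcal S_1$ (all orders equal to $1$), where the Casimir trick degenerates differently. Here Theorem~\ref{t:symbolorder} gives $a_i=-h(z)^{i+1}/h'(z)$ for each $i$, fixing the first-order part of each $\rho(L_i)$, and the zeroth-order part I would write as $\beta_i(z)\cdot h(z)^i$ for scalar-valued coefficients to be determined. Imposing $[\rho(L_0),\rho(L_1)]=-\rho(L_1)$ and $[\rho(L_0),\rho(L_{-1})]=\rho(L_{-1})$ should force $\beta_1-\beta_0$ and $\beta_{-1}-\beta_0$ to be constants, leading to the affine form $\beta_i=b(z)+i\cdot c$ with $b(z)\in V$ free and $c\in\C$ a constant; the relation $[\rho(L_1),\rho(L_{-1})]=2\rho(L_0)$ then fixes $c$ as the semi-level. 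I expect the main obstacle to lie precisely in this stratum: unlike $\mathcal S_0$ and $\mathcal S_2$, the semi-level is determined by $c$ only up to the ambiguity $c\leftrightarrow-c-1$ (reflecting the symmetry $\mu\mapsto-\mu-1$ in $(2\mu+1)^2$), so I must carefully compute the Casimir on~\eqref{e:Lihcbsl2}, confirm it equals the constant $(2c+1)^2$, and reconcile this with the normalization defining $\C_{\mathrm{sl}}$ that singles out a unique semi-level—explaining why $\mathcal T_1$ is parametrized by $c\in\C$ (not $\C_{\mathrm{sl}}$) while the associated representation has semi-level $c$ \emph{or} $-c-1$.
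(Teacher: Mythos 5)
Your proposal follows essentially the same route as the paper's proof: the trichotomy via Proposition~\ref{p:c-1es01}(1), the symbol normalization via Theorem~\ref{t:symbolorder}, the Casimir-plus-invertibility argument to recover $\rho(L_1)$ in the case ${\mathcal S}_0$, the Chevalley involution to dispose of ${\mathcal S}_2$, and for ${\mathcal S}_1$ the constancy of the zeroth-order coefficients followed by the Casimir computation resolving the $c\leftrightarrow -c-1$ ambiguity. The only slip is one of bookkeeping: the two relations $[\rho(L_0),\rho(L_{\pm1})]=\mp\rho(L_{\pm1})$ give only that $\beta_1-\beta_0$ and $\beta_{-1}-\beta_0$ are constants, and it is the third relation $[\rho(L_1),\rho(L_{-1})]=2\rho(L_0)$ that forces these constants to be opposite (yielding the affine form $\beta_i=b(z)+i\cdot c$), while the semi-level statement comes from the Casimir computation you describe afterwards --- exactly as in the paper.
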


\begin{proof}
	The fact that ${\mathcal{S}}$ is the disjoint union of ${\mathcal{S}}_i$   follows easily from the first item of Proposition~\ref{p:c-1es01}. It is also straightforward to see that the Chevalley involution induces a bijection  ${\mathcal S}_0 \simeq {\mathcal S}_2$. 
	
\textsl{The case of ${\mathcal S}_0$}. Given a triple $ (h(z), b(z), c)$   as in the statement, a straightforward computation shows that $\rho$ defined as in~\eqref{e:rhoexp} is a Casimir representation of semi-level $c$. Conversely, let $\rho\in {\mathcal S}_0$ be given, and let us compute the associated triple. Bearing in mind Proposition~\ref{p:c-1es01} and Theorem~\ref{t:symbolorder}, the expression of  $\rho(L_{-1})$ shows that $h(z)\in V$. Let $b(z)\in V$ be defined by  $b(z):=\rho(L_0)+\frac{h(z)}{h'(z)}\partial\in \Diff^0(V)=V$. 

Thus, it remains to validate the expression for $\rho(L_1)$. Thanks to Theorem  \ref{t:Casimir} we know that $\rho$ is a Casimir representation of $\sl(2)$. Let $\mu\in\C_{\mathrm{sl}}$ be the semi-level of $\rho$.  Bearing in mind that $\rho(L_{-1})$ is invertible and that  $\rho({\mathcal{C}})=(2\mu+1)^2$ is a constant, from identity (\ref{eq:Casimir1}) we obtain:
	\[
	\rho(L_1)\,=\, \rho(L_{-1})^{-1}\left( \rho(L_0)^2 -\rho(L_0)-\mu(\mu+1) \right).
	\]

\textsl{The case of ${\mathcal S}_1$}. 	Given a triple $(h(z),b(z),c))$  as in the statement, a straightforward computation shows that $\rho$ given by~\eqref{e:Lihcbsl2} defines a Casimir representation such that $\rho(\mathcal C)=(2c+1)^2$, hence its semi-level is either $c$ or $-c-1$.
	Let us now determine the triple associated to a representation $\rho\in {\mathcal S}_1$. Recalling Theorem~\ref{t:symbolorder}, we may write:
	\[
	\rho(L_i)\,=\, h(z)^i( \rho(L_0) + c_i(z)) \qquad i=-1,1 
	\]
	for certain $h(z), c_{-1}(z), c_1(z)\in V$ with $h'(z)\neq 0$. Expanding the identity $[\rho(L_{0}), \rho(L_{-1})]=\rho(L_{-1})$, one gets:
	\[
	- \frac1{h'(z)}c_{-1}'(z) \, =\, 0
	\]
	and, thus, $c_{-1}(z)$ is a constant. Proceeding similarly with $[\rho(L_0),\rho(L_1)]$, one gets that $c_1(z)$ is constant too. Finally, the identity $[\rho(L_{1}),\rho(L_{-1})]=2\rho(L_{0})$ implies that $c_{1}(z)=- c_{-1}(z)$. Denoting this constant by $c$ and defining:
	\[
	b(z)\,:=\, \rho(L_0)+\frac{h(z)}{h'(z)}\partial
	\, \in \, \Diff^0(V) = V
	\]
	we get the expressions (\ref{e:Lihcbsl2}).

Recall now that  $\rho$ is a  Casimir representation and Theorem   \ref{t:Casimir}. Let $\mu\in\C_{\mathrm{sl}}$ be the semi-level of $\rho$. By means of (\ref{eq:Casimir2})  and the identities $\rho(L_{-1})=h(z)^{-1}(\rho(L_0)-c)$, $\rho(L_{-1})=h(z)(\rho(L_0)+c)$, we get the equality 
	\begin{equation}\label{eq:id-Casimir}
	\left( \rho(L_0)+\frac{1}{2}\right)^2-\left(\mu+\frac{1}{2}\right)^2=
	h(z)(\rho(L_0)+c)h(z)^{-1}(\rho(L_0)-c)\end{equation}
Since   $\rho(L_0)+\frac{h(z)}{h'(z)}\partial=b(z)$, one has $[h(z),\rho(L_0)]=\frac{h(z)}{h'(z)}\partial h(z)=h(z)$ and thus $h(z)\circ \rho(L_0)=(\rho(L_0)+1)\circ h(z)$.  This implies\begin{align*}h(z)(\rho(L_0)+c)h(z)^{-1}(\rho(L_0)-c)&=(\rho(L_0)+c+1)(\rho(L_0)-c)\end{align*} and substituting it into (\ref{eq:id-Casimir}) we get $$\left( \rho(L_0)+\frac{1}{2}\right)^2-\left(\mu+\frac{1}{2}\right)^2=\left( \rho(L_0)+\frac{1}{2}\right)^2-\left(c+\frac{1}{2}\right)^2.$$ Whence it holds that either $\mu=c$ or $\mu=-c-1$ and  $\rho(\mathcal C)=(2c+1)^2$.

\textsl{The case of ${\mathcal S}_2$}. The proof is completely analogous to the one given for the case $\mathcal S_0$.
	
\end{proof}

\begin{rem}\label{r:Chevalley} 
Let us make explicit the action of the Chevalley involution on the set ${\mathcal S}_1$. Recall that, for a representation $\rho$, one defines another representation $\rho^{\Theta}(L_i):=(-1)^{i+1} \rho(L_{-i})$. Hence, if $\rho\in {\mathcal S}_1$ is associated to a triple $(h(z),b(z),c)\in\mathcal T_1$, then $\rho^{\Theta}$ is associated to another triple $( h^\Theta(z), b^\Theta(z),c^\Theta)\in\mathcal T_1$. Writing down the relations $\rho^{\Theta}(L_i)=(-1)^{i+1} \rho(L_{-i})$, one finds:
	\[
	h^\Theta(z) \, =\, - \frac1{h(z)} \quad , \quad
         b^\Theta(z)\, =\, -b(z)  \quad , \quad
         c^\Theta \, =\, c\, .
	\] In a similar way, the bijection of $\mathcal S_0$ with $\mathcal S_2$ induced by the Chevalley involution gives a bijection of the space of triples $\mathcal T=\mathcal T_0=\mathcal T_2$ such that for $(h(z),b(z),c)\in \mathcal T$ one has: \[
	h^\Theta(z) \, =\,  \frac1{h(z)} \quad , \quad
         b^\Theta(z)\, =\, -b(z)  \quad , \quad
         c^\Theta \, =\, c\, .
	\] 
\end{rem}

\subsection{Enveloping Algebra}\label{subsec:env}

\begin{thm}\label{t:Un-Env-Al-sl1}
	Let $\rho\in {\mathcal{S}}$ be the representation associated to a triple $(h(z),b(z),c)$. If  ${\mathcal U}(\sl(2))$ is the universal enveloping algebra of $\sl(2)$ and ${\mathcal C}$ is the Casimir operator, then  $\rho$ induces an injection:
		\[
		{\mathcal U}(\sl(2)) / \langle {\mathcal C} - (2c+1)^2 \rangle \, \hookrightarrow\, \Diff^1(\Cz)
		\, . \] 
\end{thm}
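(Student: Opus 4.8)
The plan is to invoke the universal property of the enveloping algebra and then exploit the grading by $\ad L_0$. Since $\Diff(\Cz)$ is an associative algebra whose commutator recovers the Lie bracket, the Lie homomorphism $\rho$ extends uniquely to a homomorphism of unital associative algebras $\tilde\rho\colon\mathcal U(\sl(2))\to\Diff(\Cz)$ (if $V=\C(z)$, the coefficients lie in $V\subseteq\Cz$, so all operators act on $\Cz$). By Theorem~\ref{t:Casimir} together with Theorem~\ref{t:repr-sl2} one has $\tilde\rho(\mathcal C)=(2c+1)^2$, so the two-sided ideal $\langle\mathcal C-(2c+1)^2\rangle$ lies in $\ker\tilde\rho$ and $\tilde\rho$ descends to $\bar\rho\colon A_c:=\mathcal U(\sl(2))/\langle\mathcal C-(2c+1)^2\rangle\to\Diff(\Cz)$. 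The real content of the statement is thus the injectivity of $\bar\rho$.

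To reduce injectivity to a one-variable computation, I would use the $\Z$-grading. Assign $L_i$ the degree $i$; since $[L_0,x]=-\deg(x)\,x$ on homogeneous $x$ and $\mathcal C$ is homogeneous of degree $0$, this grading descends to $A_c$. Moreover $\bar\rho$ is compatible with it: for homogeneous $x$ one has $[\rho(L_0),\bar\rho(x)]=-\deg(x)\,\bar\rho(x)$, so $\bar\rho$ sends the degree-$d$ component of $A_c$ into the $(-d)$-eigenspace of $\ad\rho(L_0)$ on $\Diff(\Cz)$. As eigenvectors of $\ad\rho(L_0)$ for distinct eigenvalues are linearly independent, $\ker\bar\rho$ is a homogeneous ideal, and it suffices to prove injectivity on each graded piece separately.

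Then I would identify the graded pieces. Using the relation $\mathcal C=(2c+1)^2$ to rewrite every occurrence of $L_1L_{-1}$ (equivalently $L_{-1}L_1$) as a polynomial in $L_0$, the PBW theorem gives, as a graded vector space, $A_c=\bigoplus_{d\geq 0}L_1^{\,d}\,\C[L_0]\ \oplus\ \bigoplus_{d>0}L_{-1}^{\,d}\,\C[L_0]$ (this is the generalized Weyl algebra description of \cite{Bavula}). Hence a nonzero homogeneous element of degree $d\geq 0$ has the form $L_1^{\,d}p(L_0)$ with $p\in\C[X]\setminus\{0\}$, and $\bar\rho(L_1^{\,d}p(L_0))=\rho(L_1)^d\,p(\rho(L_0))$. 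Now I would apply the order/symbol calculus of \S\ref{sec:symbolorder}: by Theorem~\ref{t:symbolorder} the symbol $a_1$ of $\rho(L_1)$ is nonzero and $\rho(L_0)$ is of order exactly one with symbol $a_0=-h/h'\neq 0$, so $\rho(L_1)^d p(\rho(L_0))$ has order $d\cdot\ord(\rho(L_1))+\deg p$ and symbol $a_1^{\,d}\,(\operatorname{lc}p)\,a_0^{\deg p}$. Since $V$ is a field this symbol is nonzero, whence $\bar\rho(L_1^{\,d}p(L_0))\neq 0$. The case $d<0$ (using $L_{-1}$) is identical, and the case $d=0$ reduces to the injectivity of $\C[L_0]\hookrightarrow\Diff(\Cz)$, which again follows from $\rho(L_0)$ having order exactly one. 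Therefore $\ker\bar\rho=0$.

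The main obstacle is the last step: one must be certain that the Casimir relation collapses $A_c$ precisely onto the pieces $L_1^{\,d}\C[L_0]$ and $L_{-1}^{\,d}\C[L_0]$, with no surviving relations, so that linear independence of the images genuinely reduces to the multiplicativity of symbols in one variable; this is exactly where the generalized Weyl algebra structure is needed. The grading trick is what makes the argument uniform across the three families $\mathcal S_0,\mathcal S_1,\mathcal S_2$, the only difference being the values $\ord(\rho(L_{\pm1}))\in\{0,1,2\}$, which enter solely through the harmless bookkeeping of orders and symbols.
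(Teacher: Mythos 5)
Your proof is correct, but it follows a recognizably different route from the paper's. The paper works directly with the PBW basis $\{L_1^{\alpha}L_0^{\beta}L_{-1}^{\gamma}\}$ of ${\mathcal U}(\sl(2))$: it computes $\rho(L_1^{\alpha}L_0^{\beta}L_{-1}^{\gamma})=h(z)^{\alpha-\gamma}\bigl(L^{\alpha+\beta+\gamma}+\cdots\bigr)$ via Pochhammer symbols, uses the Casimir relation to cut every monomial down to $\beta\in\{0,1\}$, and then shows by an elementary parity argument that, once $\beta\in\{0,1\}$, the pair $(\alpha-\gamma,\ \alpha+\beta+\gamma)$ determines $(\alpha,\beta,\gamma)$ uniquely, so no cancellation among leading terms can occur in a kernel element. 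You replace this collision/parity analysis by the $\Z$-grading by $\ad L_0$-eigenvalues, which makes the kernel homogeneous and so decouples the different values of $d=\alpha-\gamma$ for free, and you replace the paper's normal form by the generalized Weyl algebra form $L_{\pm 1}^{d}\,p(L_0)$; within a fixed degree, injectivity then comes from the same symbol--order calculus that the paper uses. So the essential ingredients coincide (rewriting modulo the Casimir relation, plus multiplicativity of symbols over the field $V$), but your grading reduction is cleaner and visibly uniform across $\mathcal{S}_0,\mathcal{S}_1,\mathcal{S}_2$, whereas the paper writes out only the case $\mathcal{S}_1$ and asserts the other two are similar. One remark: the ``main obstacle'' you flag, namely that the Casimir relation collapses $A_c$ \emph{exactly} onto $\bigoplus_{d\geq 0}L_1^{d}\C[L_0]\oplus\bigoplus_{d>0}L_{-1}^{d}\C[L_0]$ with no surviving relations, is not actually needed: your argument only requires that these pieces \emph{span} each graded component, which follows from the rewriting you describe. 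Indeed, a nonzero homogeneous $x$ of degree $d\geq 0$ can be written as $L_1^{d}p(L_0)$ for some $p$, and $p$ must be nonzero since $p=0$ would force $x=0$; the symbol computation then gives $\bar\rho(x)\neq 0$. The direct-sum (generalized Weyl algebra) statement is a consequence of the injectivity you are proving, not a prerequisite for it.
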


\begin{proof}
Let us prove the case $\rho\in {\mathcal{S}}_1$, the cases $\rho\in {\mathcal{S}}_0, {\mathcal{S}}_2$ can be proved similarly. 

One has to check that $( {\mathcal C} - (2c+1)^2)$ generates the kernel of the induced map ${\mathcal U}(\sl(2))\to \Diff^1(\Cz)$.  The Poincaré–Birkhoff–Witt Theorem implies that $\{ L_1^{\alpha}L_0^{\beta}L_{-1}^{\gamma}\,\vert\, \alpha, \beta, \gamma\geq 0\}$ is a basis of ${\mathcal U}(\sl(2))$ as a $\C$-vector space. Let us denote $L:=\rho(L_0)$. Using the identities $[h(z)^j, L]= j h(z)^j $ and $\rho(L_i):= h(z)^i(L+i c)$, one proves the following relations:	
	\begin{align*}
	&\rho(L_{-1}^{\gamma})\, =\, h(z)^{-1}(L-c) \, \overset{\gamma}\cdots\, h(z)^{-1}(L-c) 
		\, =\, h(z)^{-\gamma} P(L-c,\gamma),
	\\
	&\rho(L_{1}^{\alpha})\, =\, h(z)(L+c) \, \overset{\alpha}\cdots\, h(z)(L+c)
		\, =\, h(z)^{\alpha} P(L+c - \alpha+1, \alpha),
	\end{align*}
where $P$ is the Pochhammer symbol (see~\eqref{e:Poch}). Accordingly,
	\[
	\begin{aligned}
	&\rho(L_1^{\alpha}L_0^{\beta}L_{-1}^{\gamma})\,  =\, \\&=
	h(z)^{\alpha-\gamma} P(L+c- \alpha+\gamma+1, \alpha) (L+\gamma)^{\beta}   P(L-c,\gamma)
	\,=\\ & =\,  h(z)^{\alpha-\gamma} (L^{\alpha+\beta+\gamma}+\cdots)
	\end{aligned}\]
and note that the exponent of $h(z)$ is equal or smaller than that of $L$. 

If there is a linear combination of monomials $L_1^{\alpha}L_0^{\beta}L_{-1}^{\gamma}$ lying in the kernel, then it implies that there are two triples $\alpha, \beta, \gamma$ and $\alpha', \beta', \gamma'$ such that $\alpha-\gamma=\alpha'-\gamma'$ and $\alpha+\beta+\gamma= \alpha'+\beta'+\gamma'$. Recalling expression (\ref{eq:Casimir1}) for the Casimir operator and that it acts by a constant, it follows that:
	\[
	\rho(L_0)^2 \,=\, \rho(L_0) + \rho(L_{-1})\rho(L_1)+ \frac14(\rho(\mathcal C)-1)
	\]
and, accordingly, we can assume that $0\leq \beta, \beta'\leq 1$. 

It is enough to show that given two integer numbers $a,b$ with $b\geq 0$ and $b\geq a$, there is a unique solution of:
	\[
	\left\{
	\begin{aligned} & a\, =\, \alpha-\gamma \\  & b= \alpha+\beta+\gamma \end{aligned}
	\right.
	\]
where $\alpha,\beta,\gamma$ are non negative integer numbers and $\beta=0,1$. Indeed, the unique solution is given as follows. If $a,b$ have the same parity, then $\alpha=\frac12(a+b)$, $\beta=0$, $\gamma=\frac12(-a+b)$. Similarly, if $a,b$ have different parities, then  	$\alpha=\frac12(a+b-1)$, $\beta=1$, $\gamma=\frac12(-a+b-1)$. 
\end{proof}

\subsection{Conjugated Representations} 

In this subsection we consider the action of the  group of semilinear transformations on the set of representations ${\mathcal S}$. For the definition of this group and its action on ${\mathcal S}$ we address the reader to \cite[\S2.2]{Plaza-Opers}. More precisely, let  $\SGL_{\C((z))}(V)$ denote the group of semilinear transformations of $V:=\Cz$ as a $\Cz$-vector space. This group consists of those $\C$-linear automorphisms $\gamma:V\to V$ such that there exists a $\C$-algebra automorphism $\Phi$ of $\Cz$ satisfying:	
\[
\gamma(f(z)\cdot v)\,=\, \Phi(f(z)) \cdot \gamma(v)\qquad \forall f(z)\in \Cz, v\in V \, . 
\]

There is a bijection between the group  $\operatorname{Aut}_{\text{$\C$-alg}}(\Cz)$ and the  invertible series $\phi(z)\in \Czz^*$. The association between an automorphism $\Phi$ and a series $\phi(z)$ is given by the relation $\Phi(f(z))=f(\phi(z))$. The group of automorphisms also acts on the group of homotheties by conjugation; i.e. given an homothety $H_{s(z)}$ of ratio $s(z)\in  \Cz^*$, and an automorphism $\Phi$, one has that $\Phi(H_{s(z)}) = \Phi\circ H_{s(z)}\circ \Phi^{-1}=H_{s(\phi(z))}$ is the homothety of ratio  $s(\phi(z))$.  Accordingly,  $\SGL_{\C((z))}(V)$ can be identified with  the following semidirect product:
\[
\SGL_{\C((z))}(V) \, =\, \operatorname{Aut}_{\text{$\C$-alg}}(\Cz)\ltimes \Cz^*
\, . \]
It is worth pointing out that the Lie algebra of $\SGL_{\C((z))}(V)$ consists of first-order
differential operators on $V$. 

This group acts on  ${\mathcal S}$ by conjugation:
\[
(\gamma,\rho) \,\mapsto \, \rho^\gamma \qquad\text{  where }\rho^\gamma(L_k):= \gamma\circ\rho(L_k)\circ\gamma^{-1}\quad \forall k
\]
for $\gamma\in \SGL_{\C((z))}(V) $ and $\rho\in {\mathcal S}$. Let us describe explicitly the action  on  ${\mathcal S}$. 	
An automorphism  $\Phi\in \operatorname{Aut}_{\text{$\C$-alg}}(\Cz)$ acts by:
	\[
	(\Phi,\rho) \,\mapsto \, \rho^\Phi \qquad\text{  where }\rho^\Phi(L_k):= \Phi\circ\rho(L_k)\circ\Phi^{-1}\quad \forall k\, .
	\]
Hence, the transformation $\Phi$ transforms triples as follows:\todo{¿poner el cálculo?}
		\begin{equation}\label{e:Aut-act}
	 \left(\Phi , (h(z),b(z),c)\right) \,\mapsto \, (h(\phi^{-1}(z)),b(\phi^{-1}(z)),c)\, .
	 \end{equation}
Now, we write down the action of $\C((z))^*$ on  ${\mathcal S}$; which is given by:
	\[
	\left(s(z), \rho\right)\,\mapsto \, \left( s(z) \circ \rho \circ   s(z)^{-1}\right)
	\]
so that, in terms of triples, it holds that:
	\begin{equation}\label{e:homot-act}
	\left(s(z), (h(z),b(z),c) \right)\,\mapsto \, \left(h(z),b(z)+\frac{h(z)}{h'(z)}\frac{s'(z)}{s(z)},c\right)\, .
	\end{equation}
One now  checks that the first action intertwines the second one; that is:
	\[
	(\Phi\circ H_{s(z)})(\rho)= \Phi\circ H_{s(z)} \circ \rho \circ H_{s(z)^{-1}} \circ \Phi^{-1} =
	(H_{s(\phi(z))} \circ \Phi) (\rho)
	\]
 and, accordingly, the group $\SGL(\C((z)))$ acts on  ${\mathcal S}$.



\subsection{Relation with differential operators on the line}\label{subsec:sl2-line}

The above results  will allow us to classify representations $\rho:\sl(2)\to  \Diff(\Czz) $ such that $\rho(L_0)$ is a first order differential operator up to equivalence. Note that for all $k\geq 0$, the inclusion $\Czz \subset \Cz$ implies easily that:
\begin{equation}\label{e:DiffC[[z]]}
\Diff^k(\Czz) \, =\, \big\{ \sum_{j=0}^k \xi_j(z)\partial^j  \in \Diff^k(\Cz) \text{ s.t. } \xi_j(z)\in \Czz\big\}\, .
\end{equation}
Hence, given $\rho$ as above, it follows that $\rho\in {\mathcal S}$ and, thus, we distinguish the three cases $\rho\in {\mathcal S}_0$, $\rho\in {\mathcal S}_1$ or $\rho\in {\mathcal S}_2$. We proceed now to describe them.

\begin{thm}\label{t:sl2-C[[z]]}
Let $\rho \in \Hom_{\text{Lie-alg}}(\sl(2), \Diff^1(\Czz)) $. Then, there exist $\gamma \in \SGL_{\Czz}(\Czz)$ and  $c\in\C$ such that:
	 \[
	 \operatorname{Im}(\rho^\gamma)
	 \, =\, 
	 \langle \partial , z \partial - c , z^2\partial -2 c z \rangle\, .
	 \]
\end{thm}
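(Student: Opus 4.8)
The plan is to bring $\rho$ into the class $\mathcal S_1$, normalise its principal symbols by a coordinate change, and then correct the zeroth-order terms by a homothety. First I would note that $\Czz\subset\Cz$ together with \eqref{e:DiffC[[z]]} places $\rho$ in $\mathcal S$, while the hypothesis $\rho(L_i)\in\Diff^1(\Czz)$ forces $n_{-1},n_0,n_1\le 1$. By Proposition~\ref{p:c-1es01}(1) the only order pattern with $n_0=1$ and all orders $\le1$ is $(1,1,1)$, so $\rho\in\mathcal S_1$; by Theorem~\ref{t:repr-sl2} it is given by a triple $(h,b,c)$ with $\rho(L_i)=-\frac{h^{i+1}}{h'}\partial+(b+ic)h^i$ and symbols $a_i=-h^{i+1}/h'$. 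Requiring $a_{-1},a_0,a_1\in\Czz$ restricts the $z$-adic valuation $\nu(h)$ to the three values $-1,0,1$.

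The heart of the argument is to find $\Phi\in\operatorname{Aut}_{\text{$\C$-alg}}(\Czz)$ whose conjugation straightens the vector-field (symbol) part of $\rho$, so that the symbols of $\rho^{\Phi}(L_i)$ span $\langle\partial,z\partial,z^2\partial\rangle$. Under the automorphism attached to $\phi$ the triple becomes $(h\circ\phi^{-1},b\circ\phi^{-1},c)$, so I want $h\circ\phi^{-1}$ to be a Möbius function of $z$: indeed $\langle\partial,z\partial,z^2\partial\rangle$ is precisely the projective $\sl(2)$ and is preserved under Möbius reparametrisation. Concretely I would take $\phi=h$ when $\nu(h)=1$, $\phi=h-h(0)$ when $\nu(h)=0$, and $\phi=-1/h$ when $\nu(h)=-1$; in each case $\phi\in\operatorname{Aut}_{\text{$\C$-alg}}(\Czz)$ and $h\circ\phi^{-1}$ equals $z$, $z+h(0)$ or $-1/z$ respectively. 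I expect this straightening to be the main obstacle: the coordinate change must fix the origin in order to stay in $\operatorname{Aut}(\Czz)$, yet $\nu(h)$ may be $0$ or $-1$, so the naive substitution $w=h$ is unavailable and one must exploit the translation/Möbius freedom together with the fact that the origin of the formal disc need not be a fixed point of the $\sl(2)$-action.

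Since every $\gamma\in\SGL_{\Czz}(\Czz)$ acts by an order-preserving automorphism of $\Diff(\Czz)$, after this step $\operatorname{Im}(\rho^{\Phi})$ still lies in $\Diff^1(\Czz)$, and I may choose a basis $Z_{-1},Z_0,Z_1$ with symbols exactly $\partial,z\partial,z^2\partial$, say $Z_k=z^{k+1}\partial+g_k$ with $g_k\in\Czz$. Because the principal-symbol map is a Lie algebra homomorphism that is injective on the simple algebra $\operatorname{Im}(\rho^{\Phi})$, the brackets must close as $[Z_{-1},Z_0]=Z_{-1}$, $[Z_{-1},Z_1]=2Z_0$, $[Z_0,Z_1]=Z_1$, which read $g_0'-zg_{-1}'=g_{-1}$, $g_1'-z^2g_{-1}'=2g_0$ and $zg_1'-z^2g_0'=g_1$. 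I would then conjugate by the homothety $H_s$ with $s=\exp\!\big(\int g_{-1}\,dz\big)\in\Czz^*$, which replaces $g_k$ by $g_k-z^{k+1}\frac{s'}{s}=g_k-z^{k+1}g_{-1}$: the first relation shows $g_0-zg_{-1}$ is a constant $-c$, while the second and third relations (the last giving $g_1(0)=0$ on setting $z=0$) force $g_1-z^2g_{-1}=-2cz$. Hence $\gamma=H_s\circ\Phi\in\SGL_{\Czz}(\Czz)$ satisfies $\operatorname{Im}(\rho^{\gamma})=\langle\partial,\;z\partial-c,\;z^2\partial-2cz\rangle$, as required.
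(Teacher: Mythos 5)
Your proof is correct, and its middle part takes a genuinely different route from the paper's. The common ground is the opening: both arguments place $\rho$ in $\mathcal S_1$ via \eqref{e:DiffC[[z]]} and Proposition~\ref{p:c-1es01}, extract a triple $(h,b,c)\in\mathcal T_1$, and derive the valuation constraints on $h$. (Like the paper, you take $n_0=1$ for granted; if one reads the hypothesis as merely $\operatorname{Im}\rho\subseteq\Diff^1(\Czz)$, the case $n_0=0$ must also be excluded, which is immediate: all three operators would then be multiplication operators, forcing $2\rho(L_0)=[\rho(L_1),\rho(L_{-1})]=0$.) From there the paper never case-splits on $\nu(h)$: it first modifies $\rho$ by image-preserving moves --- the Chevalley involution, which replaces $h$ by $-1/h$, and the basis change $\rho^a(L_{-1})=\rho(L_{-1})$, $\rho^a(L_0)=\rho(L_0)+a\rho(L_{-1})$, $\rho^a(L_1)=\rho(L_1)+2a\rho(L_0)+a^2\rho(L_{-1})$, which replaces $h$ by $h+a$ --- so as to force $\nu(h)=1$; then the single substitution $\phi=h$ and one homothety normalize the triple to $(z,c,c)$, from which the image is read off. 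You instead exercise the M\"obius freedom on the coordinate side: three case-adapted automorphisms bring $h$ to $z$, $z+a$ or $-1/z$, and then you drop the triple formalism altogether, choosing a basis $Z_k$ of the image with symbols $z^{k+1}\partial$ (legitimate because the symbol map $\xi_1\partial+\xi_0\mapsto\xi_1\partial$ is a Lie homomorphism whose restriction to the simple three-dimensional image has trivial kernel), so that the bracket relations among the $Z_k$ are forced, and the homothety $s=\exp\int g_{-1}$ pins down the zeroth-order terms. Both are complete; what they buy differs. The paper's route is shorter and produces a normal form for $\rho$ itself (the triple $(z,c,c)$), in the same formalism reused in Theorems~\ref{t:sl2-line} and~\ref{t:Witt>-line}. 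Your route avoids both the Chevalley involution and the $\rho^a$ trick and isolates a reusable fact the paper never states explicitly: any three-dimensional Lie subalgebra of $\Diff^1(\Czz)$ whose symbols span $\langle\partial,z\partial,z^2\partial\rangle$ is conjugate, by a homothety alone, to $\langle\partial,z\partial-c,z^2\partial-2cz\rangle$; the cost is the three-case analysis at the straightening step.
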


\begin{proof}
Given $\rho$ as in the statement and bearing in mind~\eqref{e:DiffC[[z]]}, it follows that $\rho\in {\mathcal S}_1$ and, thus, let $(h(z),b(z),c)\in\mathcal T_1$ be its associated triple. 

From the very definition of the Chevalley involution $\Theta$, it is clear that $\rho$ and $\rho^\Theta$ (see Remark~\ref{r:Chevalley}) have the same image. Bearing in mind how  $\Theta$ acts on triples, one may assume that $\nu(h(z))\geq 0$, where $\nu$ denotes the valuation given by $z$. 

One can make a second assumption. Namely, given $\rho$ associated to a triple $(h(z),b(z),c)\in\mathcal T_1$ and a constant $a\in \C$, let us consider the representation $\rho^a$ associated to $(h(z)+a, (h(z)+a)(b(z)-c)h(z)^{-1}+c, c)$, notice that this triple is correctly defined  since $\rho(L_{-1})\in\Diff^1(\C[[z]])$ and by  \ref{e:DiffC[[z]]} this implies  $(b(z)-c)h(z)^{-1}\in\C[[z]])$. One checks that:	
	\[ 
	\begin{aligned}
	&\rho^a(L_{-1}) \, =\, \rho(L_{-1}) \\
	&\rho^a(L_0) \,=\, \rho(L_0) + a \rho(L_{-1})  \\
	&\rho^a(L_1) \,=\, \rho(L_1) + 2a\rho(L_0) + a^2 \rho(L_{-1})  \, .
	\end{aligned}\]
and, accordingly, $\operatorname{Im}\rho^a=\operatorname{Im}\rho$. So, we can assume $h(0)=0$ or, what is tantamount, $\nu(h(z))\geq 1$. 

From \eqref{e:DiffC[[z]]}, one has  $\frac{-1}{h'(z)}\in \Czz$ and thus, $\nu(h'(z))\leq 0$. 

Summing up, it follows that $\nu(h(z))= 1$ and $\nu(h'(z))=0$. Consider the $\C$-algebra automorphism of $\Czz$ given by $\Phi(f(z)) := f(h(z))$. Thanks to \eqref{e:Aut-act}, we can replace $\rho$ by $\rho^\Phi$. That is, we may assume that $\rho$ is associated to the triple $(z,b(z),c)\in\mathcal T_1$. Again, \eqref{e:DiffC[[z]]} implies that  $h(z)^{-1}(b(z)-c)\in \C[[z]]$ since $\rho(L_{-1})\in\Diff^1(\C[[z]])$.  Therefore,  there exists $s(z)\in \Czz$  such that $c-b(z)=\frac{h(z)}{h'(z)}\frac{s'(z)}{s(z)}$. Hence, conjugating by the homothety of ratio $s(z)$, and recalling \eqref{e:homot-act}, one concludes that the triple we started with can be assumed to be  $(z, c, c)\in\mathcal T_1$. Denote by  $\bar \rho$ its associated representation. Writing down the operators $\bar\rho(L_{-1})$, $\bar \rho(L_0)$ and $\bar\rho(L_{-1})$, one obtains the result. 
\end{proof}

\begin{rem}
	This Theorem, which studies the case $\rho\in {\mathcal S}_1$, can be thought of as an algebraic analogue of the classical result that  every representation of $\sl(2)$ by first order differential operators of the ring of real (or complex) analytic functions in the line is equivalent to $\langle \partial , z \partial + \lambda , z^2\partial +2 \lambda z \rangle $. 	See Miller \cite[\S8]{Miller} (see also \cite[Thm 1]{Kamran-Olver} for another proof and applications). Let us point out that the classification of finite dimensional Lie subalgebras on the module of derivations in one complex variable dates back to Lie itself (see \cite[vol. III]{Lie}). In order to be more precise, let us recall \cite[\S8.2]{Miller}, where the author studies the representations $\rho$  in terms of first order differential operators on ${\mathcal A}$, the space of complex  functions in ${\mathbb{C}}$ which are analytic in a neighborhood of $0\in{\mathbb{C}}$. For this goal, one fixes $\rho_0$, a realization of $\sl(2)$ by derivations of ${\mathcal A}$ and one introduces an equivalence relation in the set of those $\rho$ with the same $\rho_0$. The equivalence relation turns out to coincide with the   action of $\Cz^*$ defined in \eqref{e:homot-act}. Then it is shown (\cite[\S8.2]{Miller}) that the set of equivalence classes are given by the first cohomology group of $\sl(2)$ with values in ${\mathcal A}$ which is equal to ${\mathbb{C}}$, and  the representation $\langle \partial , z \partial + \lambda , z^2\partial +2 \lambda z \rangle $ is mapped to $\lambda$.
\end{rem}

\begin{rem}
The Lie algebra $\langle \partial , z \partial + \lambda , z^2\partial +2 \lambda z \rangle $ is a central object in the results of \cite{Turbiner-sl} when characterizing  differential equations admitting polynomial solutions. In fact, \cite[Lemma 1]{Turbiner-sl} can be deduced from what we have proved. Furthermore, due to our results for the enveloping algebra of $\sl(2)$ (see~\S\ref{subsec:env}) and for the  Witt algebra  (see~\S\ref{subsec:env-alg-w}), one could extend Turbiner's  characterization to Witt algebras. 
\end{rem}

\begin{thm}\label{t:sl2-line}
	Let $\rho \in \Hom_{\text{Lie-alg}}(\sl(2), \Diff(\Czz))$ be a representation such that $\rho\in {\mathcal{S}_0}$. Then $\rho$ is equivalent, under the action of $\SGL_{\Czz}(\Czz)$, to one of the following three types:
	\begin{enumerate}
		\item 	$\bar{\rho}(L_{-1})= z^2$, $\bar{\rho}(L_0)= \frac1{2}z \partial +\frac14$, $\bar{\rho}(L_{1})= \frac14\partial^2$;
		\item  $\bar{\rho}(L_{-1})= z$, $ \bar{\rho}(L_0)= z \partial +b$, $\bar{\rho}(L_{1})= z\partial^2 + 2b\partial$, for $b\in \C$;
		\item $\bar{\rho}(L_{-1})= {a+z}$, $\bar{\rho}(L_0)= (a+z) \partial$, $\bar{\rho}(L_{1})= (a+z) \partial^2 -c(c+1)(a+z)^{-1}$ for $a\in \C^*$ and $c\in\C_{\mathrm{sl}}$.		
	\end{enumerate}
\end{thm}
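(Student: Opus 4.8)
The plan is to begin from the parametrization of $\mathcal{S}_0$ provided by Theorem~\ref{t:repr-sl2}, impose that all coefficients of $\rho(L_{-1}),\rho(L_0),\rho(L_1)$ lie in $\Czz$, and then exploit the two generating families of $\SGL_{\Czz}(\Czz)$---the coordinate changes acting on triples by \eqref{e:Aut-act} and the homotheties acting by \eqref{e:homot-act}---to reduce the triple $(h(z),b(z),c)$ to a normal form. It is convenient to work with $u(z):=\rho(L_{-1})=h(z)^{-1}$; since $\rho\in\mathcal{S}_0$ means $\rho(L_{-1})\in\Diff^0(\Czz)$, we have $u\in\Czz$, and we set $m:=\nu(u)\geq 0$ for its valuation. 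In these variables $\rho(L_0)=\tfrac{u}{u'}\partial+b$, while by Theorem~\ref{t:symbolorder} the symbol of $\rho(L_1)$ equals $u/(u')^2$. Using \eqref{e:DiffC[[z]]}, requiring $\tfrac{u}{u'}\in\Czz$ and $u/(u')^2\in\Czz$ together with a short valuation count ($\nu(u/u')=1$ and $\nu(u/(u')^2)=2-m$ for $m\geq 1$, whereas $m=0$ forces $u'(0)\neq 0$) yields $m\in\{0,1,2\}$; this is precisely the trichotomy of the statement, with $m=2,1,0$ corresponding to cases (1),(2),(3).

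Next I would normalize $u$ by a coordinate change $\Phi$ in each case. For $m=1$ the series $u$ has valuation $1$ with $u'(0)\neq 0$, hence is itself an automorphism, and $\phi=u$ gives $u\circ\phi^{-1}=z$; for $m=0$ the same applied to $u-u(0)$ gives $u\circ\phi^{-1}=a+z$ with $a:=u(0)\in\C^*$; for $m=2$ I would extract a valuation-$1$ square root $w$ with $w^2=u$ (possible since the leading coefficient of $u$ is nonzero) and take $\phi=w$, obtaining $u\circ\phi^{-1}=z^2$. Correspondingly $\rho(L_0)=\tfrac{u}{u'}\partial+b$ becomes $z\partial+b$, $(a+z)\partial+b$, or $\tfrac12 z\partial+b$. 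I would then apply a homothety $s(z)$: by \eqref{e:homot-act} this shifts $b$ by $\tfrac{h}{h'}\tfrac{s'}{s}=-\tfrac{u}{u'}\tfrac{s'}{s}$, and since $\tfrac{u}{u'}$ equals $z$, $a+z$, or $\tfrac z2$, solving for $(\log s)'$ removes the positive-valuation part of $b$---and, when $a\neq 0$, even its constant term---reducing $b$ to a constant in cases (1),(2) and to $0$ in case (3).

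Finally I would expand $\rho(L_1)=h\bigl(\rho(L_0)^2-\rho(L_0)-c(c+1)\bigr)$ from \eqref{e:rhoexp} in each normalized case and require it to lie in $\Diff(\Czz)$. In case (3) the only potentially singular term is $-c(c+1)(a+z)^{-1}$, which lies in $\Czz$ because $a\neq 0$, so $c\in\C_{\mathrm{sl}}$ remains free; in case (2) the coefficient of $z^{-1}$ vanishes iff $b^2-b-c(c+1)=0$, which forces $c\in\{b-1,-b\}$ and leaves $b\in\C$ free; in case (1) the coefficients of $z^{-1}\partial$ and $z^{-2}$ vanish iff $b=\tfrac14$ and $c(c+1)=-\tfrac3{16}$, fixing the representation entirely. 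This reproduces exactly the three displayed normal forms.

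I expect the $m=2$ case to be the main obstacle: one must justify the valuation-$1$ square root $w$ of $u$ and verify that $\phi=w$ is a legitimate element of $\operatorname{Aut}_{\text{$\C$-alg}}(\Czz)$, and throughout one must keep careful track of which parameters survive the normalization---in particular checking that the residual lower-order constraints on $\rho(L_1)$ are automatic consequences of $\rho\in\Diff(\Czz)$ rather than extra hypotheses.
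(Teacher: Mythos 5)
Your proposal is correct and takes essentially the same route as the paper's proof: the symbol/valuation count gives the trichotomy $\nu(\rho(L_{-1}))\in\{2,1,0\}$, an automorphism of $\Czz$ normalizes $h$ to $z^{-2}$, $z^{-1}$ or $(a+z)^{-1}$, a homothety via \eqref{e:homot-act} reduces $b$ to a constant (or to $0$), and the lower-order coefficients of $\rho(L_1)$ impose exactly the constraints $b=\tfrac14$, $c=-\tfrac14$, resp.\ $b^2-b-c(c+1)=0$, resp.\ none. The only differences are cosmetic: you work with $u=h^{-1}$ instead of $h$, and you make explicit the square-root automorphism in the valuation-$2$ case, which the paper subsumes under ``acting by an automorphism.''
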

	
\begin{proof}
	Since $\rho\in{\mathcal{S}_0}$, let us consider its associated triple $(h(z),b(z),c)$. Recalling Theorem~\ref{t:symbolorder} and \eqref{e:DiffC[[z]]}, one gets:
		\[
		(2i+1)\nu(h(z))-(i+1) \nu(h'(z))\geq 0\quad \text{ for }i=-1,0,1\, ,
		\]
	 which yields the following three cases. 
	
	$(1)$. $\nu(h(z))=-2$. Hence, acting by an automorphism of $\Czz$, we may assume $h(z)=z^{-2}$. Since $\frac{h(z)}{h'(z)}=-\frac1{2}z$ and bearing in mind \eqref{e:homot-act} we may conjugate by an homothety $s(z)\in\Czz^*$ such that $b(z)+\frac{h(z)}{h'(z)}\frac{s'(z)}{s(z)} $ does  not depend on $z$. That is, we can  assume that $b(z)$ is a constant, say $b\in\C$. Using the expression \eqref{e:rhoexp} to compute the coefficients of $\partial$ in the operator $\rho(L_{1})$ and noting that they must lie in $ \Czz$, further constraints follow. Indeed, looking at the coefficient of $\partial$, one gets $b=\frac14$. Similarly, the fact that the free term of $\rho(L_1)$ belongs to $\Czz$ yields $b^2- b-c(c+1)=0$ and, thus, $c=-\frac1{4}\in\C_{\mathrm{sl}}$. 
	
	Summing up, the representation we started with is equivalent, under the action of $\SGL(\Czz)$, to the representation associated to the triple $(z^{-2},\frac14,-\frac1{4})\in\mathcal T_0$; that is,
		\[
		\bar{\rho}(L_{-1})= z^2, \qquad
		\bar{\rho}(L_0)= \frac1{2}z \partial +\frac14, \qquad
		\bar{\rho}(L_{1})= \frac1{4}\partial^2 \, .		
		\]
	
	$(2)$. $\nu(h(z))=-1$. Proceeding as above, acting by a suitable automorphism and a homothety, we may assume that $h(z)=z^{-1}$ and $b(z)$ is  constant, say $b\in \C$. Then, the coefficients of $\rho(L_1)$ lie in $\Czz$  if and only if $b^2- b-c(c+1)=0$. Hence, $\rho$ is equivalent to the representation associated to the triple $(z^{-1},b,c)\in\mathcal T_0$ where either $b=-c$ or $b=c+1$:
			\[
		\bar{\rho}(L_{-1})= z, \qquad
		\bar{\rho}(L_0)= z \partial +b, \qquad
		\bar{\rho}(L_{1})= z\partial^2 + 2b\,\partial \, .		
		\]
				
	$(3)$.  $\nu(h(z))=\nu(h'(z))=0$. Acting by a suitable automorphism and a homothety, we may assume that $h(z)=(a+ z)^{-1}$ for $a\in\C^*$ and that $b(z)=0$. There are no further constraints and, thus,   $\rho$ is equivalent to the representation associated to the triple $((a+z)^{-1},0,c)\in\mathcal T_0$:
	\[
	\bar{\rho}(L_{-1})= {a+z}, \,
	\bar{\rho}(L_0)= (a+z) \partial , \,
	\bar{\rho}(L_{1})= (a+z) \partial^2 -c(c+1)(a+z)^{-1} \, .		
	\]
\end{proof}	

Following the same ideas or just by using the Chevalley involution, one gets the following:

\begin{thm}\label{t:sl2-line-rho-2}
	Let $\rho \in \Hom_{\text{Lie-alg}}(\sl(2), \Diff(\Czz))$ be a representation such that $\rho\in {\mathcal{S}_2}$. Then $\rho$ is equivalent, under the action of $\SGL_{\Czz}(\Czz)$, to one of the following three types:
	\begin{enumerate}
		\item 	$\bar{\rho}(L_{-1})= \frac14\partial^2 $, $\bar{\rho}(L_0)=- \frac1{2}z \partial -\frac14$, $\bar{\rho}(L_{1})= z^2$;
		\item  $\bar{\rho}(L_{-1})= z\partial^2 - 2b\partial$, $ \bar{\rho}(L_0)= -z \partial +b$, $\bar{\rho}(L_{1})= z$, for $b\in \C$;
		\item $\bar{\rho}(L_{-1})= (a+z) \partial^2 -c(c+1)(a+z)^{-1} $, $\bar{\rho}(L_0)= -(a+z) \partial$, $\bar{\rho}(L_{1})= {a+z}$ for $a\in \C^*$ and $c\in\C_{\mathrm{sl}}$.		
	\end{enumerate}
\end{thm}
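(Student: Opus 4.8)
The plan is to deduce Theorem~\ref{t:sl2-line-rho-2} directly from Theorem~\ref{t:sl2-line} by transporting everything through the Chevalley involution $\Theta$, exactly as the preamble ``Following the same ideas or just by using the Chevalley involution'' suggests. First I would recall from Theorem~\ref{t:repr-sl2} that the assignment $\rho\mapsto\rho^\Theta=\rho\circ\Theta$ is a bijection $\mathcal S_0\xrightarrow{\sim}\mathcal S_2$, and from Remark~\ref{r:Chevalley} that $\Theta$ sends the triple $(h(z),b(z),c)$ of a representation in $\mathcal S_0$ to the triple $\bigl(\tfrac{1}{h(z)},-b(z),c\bigr)$ of the corresponding representation in $\mathcal S_2$. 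The key structural point is that $\Theta$ preserves the property of landing inside $\Diff(\Czz)$ in the appropriate sense, so that a representation $\rho\in\Hom_{\text{Lie-alg}}(\sl(2),\Diff(\Czz))$ with $\rho\in\mathcal S_2$ is precisely $\sigma^\Theta$ for a unique $\sigma\in\Hom_{\text{Lie-alg}}(\sl(2),\Diff(\Czz))$ with $\sigma\in\mathcal S_0$.

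Next I would invoke Theorem~\ref{t:sl2-line} to put $\sigma$ into one of its three normal forms up to the action of $\SGL_{\Czz}(\Czz)$, and then observe that this action is compatible with $\Theta$. Concretely, $\Theta$ commutes with conjugation: if $\sigma$ is equivalent to $\bar\sigma$ via $\gamma\in\SGL_{\Czz}(\Czz)$, meaning $\sigma^\gamma=\bar\sigma$, then applying $\Theta$ gives $(\sigma^\gamma)^\Theta=(\sigma^\Theta)^\gamma$, because $\gamma$ acts on the operators $\sigma(L_i)$ while $\Theta$ only permutes and rescales the indices. Hence $\rho=\sigma^\Theta$ is equivalent, under the same $\gamma$, to $\bar\sigma^\Theta$. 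It therefore remains to compute $\bar\sigma^\Theta$ for each of the three representatives of Theorem~\ref{t:sl2-line}, using the definition $\rho^\Theta(L_i)=(-1)^{i+1}\rho(L_{-i})$.

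I would carry out these three computations in order. For case~(1), starting from $\bar\sigma(L_{-1})=z^2$, $\bar\sigma(L_0)=\tfrac12 z\partial+\tfrac14$, $\bar\sigma(L_1)=\tfrac14\partial^2$, the involution gives $\bar\sigma^\Theta(L_{-1})=\bar\sigma(L_1)=\tfrac14\partial^2$, $\bar\sigma^\Theta(L_0)=-\bar\sigma(L_0)=-\tfrac12 z\partial-\tfrac14$, and $\bar\sigma^\Theta(L_1)=\bar\sigma(L_{-1})=z^2$, matching the statement. For case~(2), the parameter $b$ must be tracked: $\bar\sigma^\Theta(L_0)=-\bar\sigma(L_0)=-z\partial-b$, so after relabelling the free constant $-b$ as $b$ one recovers $\bar\rho(L_0)=-z\partial+b$, together with $\bar\rho(L_{-1})=\bar\sigma(L_1)=z\partial^2+2b\partial$, which becomes $z\partial^2-2b\partial$ under the same relabelling, and $\bar\rho(L_1)=z$. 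Case~(3) is analogous, with $h^\Theta=1/h$ sending $(a+z)^{-1}$-type symbols to their reciprocals, and $c$ unchanged since $c^\Theta=c$. The main obstacle is purely bookkeeping: one must be careful with the sign conventions in $\rho^\Theta(L_i)=(-1)^{i+1}\rho(L_{-i})$ and with the harmless relabelling of the free parameter $b$ in case~(2), so that the three transported normal forms coincide verbatim with those listed in the theorem; no new ideas beyond Theorem~\ref{t:sl2-line} and the functoriality of conjugation with respect to $\Theta$ are required.
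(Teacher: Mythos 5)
Your proposal is correct and is exactly the route the paper takes: the paper's entire proof is the remark ``Following the same ideas or just by using the Chevalley involution,'' i.e.\ transporting the normal forms of Theorem~\ref{t:sl2-line} through $\Theta$, which is what you carry out. Your verification that conjugation by $\gamma\in\SGL_{\Czz}(\Czz)$ commutes with $\Theta$ (since one acts on the target and the other on the source) and the three explicit computations, including the harmless relabelling $b\mapsto -b$ in case~(2), are all accurate.
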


\section{The Cases of Witt and Virasoro algebras}\label{sec:Witt}

Let the Pochhammer symbol be defined by:
\begin{equation}\label{e:Poch}
P(f,n) \,:=\, 
\begin{cases} f (f+1)\cdot\ldots \cdot (f+n-1)  & \text{ for } n> 0 \\
1  & \text{ for } n=0 \\
\frac1{(f+n)(f+n+1)\dots (f-1)} & \text{ for } n\leq -1
\end{cases}
\end{equation}
and note that it makes sense for $f$ in any ring with unity for $n\geq 0$ and in any field of characteristic $0$ and $f\notin {\mathbb{Z}}$ for $n<0$. 

\begin{thm}\label{t:Witt>}
	Let ${\mathfrak g}=\Witt_{>}, \Witt_<, \Witt, \Vir$ and $V=\C(z), \Cz$. It holds that:
\begin{enumerate}
	\item[(a)] 	\[ 
	{\mathcal R}(\g)\,:=\, \left\{\begin{gathered}  \rho\in \Hom_{\text{Lie-alg}}({\mathfrak{g}}, \Diff(V)) \\  
	\text{ s.t. $\rho(L_0)$ is of first order} \end{gathered} \right\} 
	\, = \, {\mathcal R}_0(\g) \sqcup {\mathcal R}_1(\g) \sqcup {\mathcal R}_2(\g)
	\]
	where $ {\mathcal R}_i(\g)$  consists of maps $\rho$ such that $\rho(L_{-1})$ has order $i$.
	\item[(b)] The embedding $\sl(2)\hookrightarrow {\mathfrak g}$ induces a restriction map:
	\[
	\begin{aligned}
	r\colon {\mathcal{R}}(\g) \,& \longrightarrow\, {\mathcal{S}}
	\\ \rho \, & \longmapsto \rho\vert_{\sl(2)}
	\end{aligned}
	\]
	that yields  a bijection $r\colon  {\mathcal{R}}_1(\g) \to {\mathcal{S}}_1$ and maps  $r\colon  {\mathcal{R}}_0(\g) \to {\mathcal{S}}_0$, $r\colon  {\mathcal{R}}_2(\g) \to {\mathcal{S}}_2$ whose fibers have at most cardinality $2$.
\end{enumerate}

More precisely, one has 	
	\begin{enumerate}
	\item Given $\rho\in  {\mathcal{R}}_1(\g)$, let $(h(z),b(z),c)\in\mathcal T_1$ be the triple associated to $\rho\vert_{\sl(2)} \in {\mathcal{S}}_1$, then:
		\begin{equation}\label{e:Lihcb}
		\rho(L_i) =
		\frac{-h(z)^{i+1}}{h'(z)} \partial  +(b(z)+i \cdot c) \cdot h(z)^i
		\quad\forall L_i\in{\mathfrak{g}}
		\end{equation}
	and, in the case ${\mathfrak{g}}= \Vir$, the central element is mapped to $0$, $\rho(K)=0$.
	\item If $\rho\in  {\mathcal{R}}_0( \Witt_>)$, and $(h(z),b(z),c)\in\mathcal T_0$ is the triple associated to $\rho\vert_{\sl(2)} \in {\mathcal{S}}_0$, then:
		\begin{equation}\label{e:Li012}
	\begin{split}
	\rho(L_{0}) \,&=\, -\frac{h(z)}{h'(z)} \partial + b(z) \, ,
	\\
	\rho(L_i) \,&=\, 	h(z)^i \left(\rho(L_0)  +i \lambda \right) 
	P(\rho(L_0) -\lambda-i, i)	 \, , \ \text{for}\  i\neq 0,
	\end{split}
		\end{equation}
	where   $\lambda$ is equal to either $c$ or $-c-1$ and $P$ is the Pochhammer symbol. Therefore,  the mapping $r\colon  {\mathcal{R}}_0(\Witt_>) \to {\mathcal{S}}_0$ is surjective and the fibers have cardinality $2$, except for $c=-\frac12$ where the cardinality is $1$.
	\item If $\rho\in  {\mathcal{R}}_2( \Witt_<)$, and $(h(z),b(z),c)\in\mathcal T_2$ is the triple associated to $\rho\vert_{\sl(2)} \in {\mathcal{S}}_2$, then:
		\begin{equation}\label{e:Li210}
	\begin{split}
	\rho(L_{0}) \,&=\, -\frac{h(z)}{h'(z)} \partial + b(z) \, ,
	\\
	\rho(L_i) \,&=\, 	h(z)^i \left(\rho(L_0)  +i \lambda \right) 
	P(\rho(L_0) +\lambda+1 ,-i)	 \, , \ \text{for}\  i\neq 0,
	\end{split}
		\end{equation}
	where   $\lambda$ is equal to either $c$ or $-c-1$ and $P$ is the Pochhammer symbol. Thus,  $r\colon  {\mathcal{R}}_0(\Witt_<) \to {\mathcal{S}}_2$ is surjective and its fibers have cardinality $2$, except for $c=-\frac12$ where the cardinality is $1$.  
	
	${\mathcal R}_2(\Witt_>)$ is in bijection with ${\mathcal R}_0(\Witt_<)$ by mapping $\rho\in\mathcal R_0$ to $\rho^\Theta=\rho\circ\Theta\in\mathcal R_2$, where $\Theta$ is the Chevalley involution  .

	\item The subsets  $ {\mathcal R}_0(\Witt_<) $, $ {\mathcal R}_2(\Witt_>) $ are empty.
	\item For  ${\mathfrak g}= \Witt, \Vir$, the subsets  $ {\mathcal R}_0(\g) $, $ {\mathcal R}_2(\g) $ are empty. 
\end{enumerate}

\end{thm}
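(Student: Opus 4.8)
The plan is to reduce everything to the order computation of Proposition~\ref{p:c-1es01} and the symbol normalization of Theorem~\ref{t:symbolorder}, and then to propagate the $\sl(2)$-data of \S\ref{sec:expre} along the Witt relations. Part (a) is immediate: by Proposition~\ref{p:c-1es01} the order $n_{-1}=\ord(\rho(L_{-1}))$ can only be $0,1$ or $2$, so $\mathcal R(\g)$ splits as claimed. The same proposition gives the emptiness statements (4) and (5): for $\g=\Witt_<$ one has $n_{-1}\in\{1,2\}$, hence $\mathcal R_0(\Witt_<)=\emptyset$, and dually $\mathcal R_2(\Witt_>)=\emptyset$. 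For $\g=\Witt,\Vir$ I first observe that $\{L_i\mid i\ge -1\}$ and $\{L_i\mid i\le 1\}$ span genuine subalgebras isomorphic to $\Witt_>,\Witt_<$ even inside $\Vir$, because the only pairs with $i+j=0$ occurring in each set satisfy $\Psi(L_i,L_{-i})=\tfrac1{12}(i^3-i)=0$, so no central element appears. Applying parts (2)--(3) of Proposition~\ref{p:c-1es01} to each restriction and intersecting the two possibilities $n_{-1}\in\{0,1\}$ and $n_{-1}\in\{1,2\}$ forces $n_{-1}=1$, hence $n_i=1$ for all $i$; this gives $\mathcal R_0(\g)=\mathcal R_2(\g)=\emptyset$ and reduces $\Witt,\Vir$ to the case $\mathcal R_1$.

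For statement (1) I set $X:=\rho(L_0)=-\tfrac{h}{h'}\partial+b$ and use $[X,h]=-h$, i.e. $hX=(X+1)h$. Since $n_i=1$ and the symbol of $\rho(L_i)$ is $h^i$ times that of $X$ (Theorem~\ref{t:symbolorder}), I may write $\rho(L_i)=h^i(X+\phi_i)$ with $\phi_i\in V$. Expanding $[\rho(L_0),\rho(L_i)]=-i\,\rho(L_i)$ yields $\tfrac{h^{i+1}}{h'}\phi_i'=0$, so each $\phi_i$ is constant; expanding $[\rho(L_{-1}),\rho(L_i)]=(-1-i)\rho(L_{i-1})$, whose $\sl(2)$-input fixes $\phi_{-1}=-c$, gives the recursion $(1+i)\phi_{i-1}=i\phi_i-c$, which together with $\phi_1=c$ forces $\phi_i=ic$ for all $i$. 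This is exactly \eqref{e:Lihcb} and shows that $r\colon\mathcal R_1(\g)\to\mathcal S_1$ is injective, while \eqref{e:Lihcb} conversely defines a homomorphism for every triple of $\mathcal T_1$, giving surjectivity. For $\g=\Vir$ the element $\rho(K)$ commutes with all $\rho(L_i)$, hence is constant by Proposition~\ref{p:comm-constant}; a direct computation with $hX=(X+1)h$ gives $[\rho(L_i),\rho(L_{-i})]=2i\,\rho(L_0)$ with no additive constant, so comparison with $2i\,\rho(L_0)+\tfrac1{12}(i^3-i)\rho(K)$ and the choice $i=2$ forces $\rho(K)=0$.

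The heart of the argument is statement (2), and the main obstacle is the determination of the fibers. After conjugating by a homothety we may assume $b=0$, so that $X=-\partial_w$ in the coordinate $w=\log h$; then any differential operator $P$ with $[X,P]=-iP$ is $h^i$ times a constant-coefficient operator in $\partial_w=-X$, whence $\rho(L_i)=h^i q_i(X)$ with $q_i$ a polynomial and $\deg q_i=\ord\rho(L_i)=n_i=i+1$ (the conclusion being conjugation-covariant, it holds for the original $X=\rho(L_0)$). The $\sl(2)$-restriction fixes $q_{-1}=1$, $q_0=X$ and $q_1=(X+c)(X-c-1)$, and since $\Witt_>$ is generated by $L_{-1}$ and $L_2$, the extension is determined by $\rho(L_2)=h^2q_2(X)$. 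The relation $[\rho(L_{-1}),\rho(L_2)]=-3\rho(L_1)$ reads $q_2(X+1)-q_2(X)=3q_1(X)$, determining $q_2$ up to an additive constant $A$. Relations involving only $L_{-1},L_0,L_1$ annihilate $A$ (the difference operator kills constants), so I pass to the first relation that is genuinely quadratic in the free data, namely $[\rho(L_2),\rho(L_3)]=-\rho(L_5)$, with $\rho(L_3),\rho(L_5)$ produced from $\rho(L_1),\rho(L_2)$ by the recursion $\rho(L_{i+1})=\tfrac1{1-i}[\rho(L_1),\rho(L_i)]$. Its left-hand side is quadratic in $A$ while its right-hand side is linear, so the resulting polynomial identity in $X$ is a nontrivial quadratic equation for $A$, whose two roots are precisely $\lambda=c$ and $\lambda=-c-1$, coinciding exactly when $2c+1=0$. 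Feeding $\lambda$ into the Pochhammer expression and verifying, by a direct computation with $q_i(X+1)-q_i(X)=(i+1)q_{i-1}(X)$ and $hX=(X+1)h$, that \eqref{e:Li012} satisfies all Witt relations shows that both roots yield genuine representations; hence $r\colon\mathcal R_0(\Witt_>)\to\mathcal S_0$ is surjective with fibers of the stated cardinality.

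Finally, statement (3) and the $\Witt_<$ part follow formally by precomposing with the Chevalley involution $\Theta\colon\Witt_<\xrightarrow{\sim}\Witt_>$: since $\ord(\rho\circ\Theta(L_i))=\ord(\rho(L_{-i}))$, the assignment $\rho\mapsto\rho\circ\Theta$ carries $\mathcal R_2(\Witt_<)$ bijectively onto $\mathcal R_0(\Witt_>)$ and intertwines the two restriction maps with the bijection $\mathcal S_2\simeq\mathcal S_0$ of Theorem~\ref{t:repr-sl2}, transporting formula \eqref{e:Li012} into \eqref{e:Li210}. Collecting the bijection $r\colon\mathcal R_1(\g)\to\mathcal S_1$ of the second paragraph with the surjections on $\mathcal R_0,\mathcal R_2$ just obtained then yields part (b).
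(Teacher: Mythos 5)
Your proposal is correct in substance and follows the paper's own strategy almost step for step: part (a) and the emptiness statements (4)--(5) come from Proposition~\ref{p:c-1es01}, exactly as in the paper; your recursion forcing $\phi_i=ic$ replaces the paper's citation of \cite[Theorem~2.1]{Plaza-Opers} for the bijection $r\colon\mathcal R_1(\g)\to\mathcal S_1$ (a nice self-contained substitute, though note that your recursion $(1+i)\phi_{i-1}=i\phi_i-c$ is vacuous at $i=-1$, so for $\Witt$, $\Witt_<$ and $\Vir$ you must either invoke the dual recursion coming from $\ad\rho(L_1)$ or, as the paper does, glue the restrictions to $\Witt_>$ and $\Witt_<$ along $\sl(2)$ --- you gesture at this but it should be made explicit); the computation $\rho(K)=0$ via $i=2$ is the paper's; and your treatment of $\mathcal R_0(\Witt_>)$ reproduces the paper's three lemmas: the reduction of the freedom to a single constant $A$ plays the role of Lemma~\ref{l:constant}, the quadratic in $A$ extracted from $[\rho(L_2),\rho(L_3)]=-\rho(L_5)$ is precisely Lemma~\ref{l:21}, and the verification that both $\lambda=c$ and $\lambda=-c-1$ yield genuine representations is Lemma~\ref{l:LiPoch}. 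Statement (3) via the Chevalley involution is also the paper's route.

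The one step that does not survive scrutiny as written is your justification of the normal form $\rho(L_i)=h(z)^i q_i(\rho(L_0))$, namely ``after conjugating by a homothety we may assume $b=0$, so that $X=-\partial_w$ in the coordinate $w=\log h$''. Neither operation is available inside $V$: killing $b(z)$ requires a homothety of ratio $s(z)\in\Cz^*$ with $\frac{s'}{s}=-\frac{h'}{h}\,b$, and logarithmic derivatives of elements of $\Cz^*$ lie in $\Z z^{-1}+\Czz$, so that, e.g., for $h=z$, $b=z^{-1}$ one would need $\frac{s'}{s}=-z^{-2}$, which is impossible (and $\log h\notin V$ either). Fortunately the claim you actually use is true and has an elementary proof in the spirit of \S\ref{sec:symbolorder}: by induction on the order, solving the first-order ODE satisfied by the symbol, one shows that any $P$ with $[\rho(L_0),P]=-iP$ equals $h^i q(\rho(L_0))$ for a polynomial $q$; alternatively, you can bypass the normal form altogether and argue as in Lemma~\ref{l:constant}, since the difference of two candidates for $\rho(L_2)$ commutes with $\rho(L_{-1})=h^{-1}$ (hence has order $0$) and is an $\ad\rho(L_0)$-eigenvector (hence equals $\alpha h^2$). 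A smaller remark of the same kind: ``left side quadratic in $A$, right side linear'' does not by itself exclude cancellation of the $A^2$-term; it survives because that coefficient equals $-[h^2,[\rho(L_1),h^2]]=8h(z)^5\neq 0$, which deserves one line (the paper's Lemma~\ref{l:21} is equally terse here, so this is parity with, not a deviation from, its standard).
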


\begin{proof}
	For the first part, let $\rho\in{\mathcal{R}}(\g)$ be given. Since $\ord(\rho(L_0))=1$, Proposition~\ref{p:c-1es01} implies that $\ord(\rho(L_{-1}))$ is either $0$ or $1$. Recall that these two cases correspond to $n_i=\ord(\rho(L_i))=i+1$ and  $n_i=\ord(\rho(L_i))=1$, respectively. Thus, applying Proposition~\ref{p:c-1es01} again, one checks easily that $r({\mathcal{R}}_i)(\g) \subseteq {\mathcal{S}}_i$ for $i=0,1,2$. 
	
	Now, let us study the map $r\colon  {\mathcal{R}}_1(\g) \to {\mathcal{S}}_1$. We begin with the case ${\mathfrak g}=\Witt_>$. Given $\rho\in  {\mathcal{S}}_1$, we know that $n_i=1$ for all $i=-1,0,1$. Then, the proof of \cite[Theorem~2.1]{Plaza-Opers} can be applied to both cases, $V=\C(z)$ and $V=\Cz$, and it yields that there exists a unique triple $(h(z),b(z),c)$ such that $\rho$ extends uniquely to a representation of $\Witt_>$ given by:
			\[
	\rho(L_i) =
	\frac{-h(z)^{i+1}}{h'(z)} \partial  +(b(z)+i \cdot c) \cdot h(z)^i
	\qquad \forall L_i\in 
	\Witt_>\, . \]
	Bearing in mind Theorem~\ref{t:repr-sl2}, one concludes that $r\colon  {\mathcal{R}}_1(\Witt_>) \to {\mathcal{S}}_1$ is bijective. 
	
	The case $\g=\Witt_<$. 	Note that  the Chevalley involution,  $\Theta$,  establishes an isomorphism between $\Witt_<$ and $\Witt_>$.  Since the result has already been proved for $\Witt_>$, given $\rho\in\mathcal R_1(\Witt_<)$ we may apply it to $\rho^\Theta:=\rho\circ\Theta \in\mathcal R_1(\Witt_>)$. Let $(h(z),b(z),c)$ be the triple associated to $\rho|_{\sl(2)}$. One has $(\rho|_{\sl(2)})^\Theta=(\rho^\Theta)|_{\sl(2)}$. Hence, thanks to Remark \ref{r:Chevalley}, $\rho^\Theta$ is given by formula (\ref{e:Lihcb}) for the triple  $( h^\Theta(z), b^\Theta(z),c^\Theta)$. A simple check shows that $\rho$ is given by the same formula (\ref{e:Lihcb}) for the triple  $( h(z), b(z),c)\in\mathcal T_1$. Therefore, $r\colon  {\mathcal{R}}_1(\Witt_<) \to {\mathcal{S}}_1$ is also bijective.

The case ${\mathfrak g}=\Witt$. Since $(\rho\vert_{\Witt_>})\vert_{\sl(2)}$ coincides with  $ (\rho\vert_{\Witt_<})\vert_{\sl(2)}$, it follows that $\rho$ is explicitly given by \eqref{e:Lihcb} for all $i\in {\mathbb Z}$. This immediately implies that $r\colon  {\mathcal{R}}_1(\Witt) \to {\mathcal{S}}_1$ is a bijection.
	
	Finally, let us deal with the case of $\Vir$. Notice that $\Witt_>$ is a  Lie subalgebra of $\mathrm{Vir}$ and we have proved that the restriction $\rho_>$ of $\rho$ to $\Witt_>$ is determined by a triple $\xi_>=(h_>(z), b_>(z),c_>)$. Analogously, $\Witt_<$ is a Lie subalgebra of $\mathrm{Vir}$ and we have seen that the restriction $\rho_<$ of $\rho$ to $\Witt_<$ is determined by another triple $\xi_<=(h_<(z), b_<(z),c_<)$. Since $\rho_>$ and $\rho_<$ do coincide on $\sl(2)=\Witt_>\cap \Witt_<$ we immediately get the equality $\xi_>=\xi_<$ and  therefore $\rho(L_i)$ must acquire the form \eqref{e:Lihcb} for all $i$. 
	Now it only remains to determine $\rho(K)$ where $K$ is the central element. Since $\rho$ is a map of Lie algebras  and ${\Vir}$ is defined by the cocycle~\eqref{eq:Vir-co}, it holds that $$\frac{1}{2}\rho(K)=[\rho(L_2),\rho(L_{-2})]-4\rho(L_0).$$
	Using the explicit expressions \eqref{e:Lihcb}, a simple computation shows now that the left hand side vanishes, therefore $\rho(K)=0$. This finishes the proof of (1).
	
It is clear that (3) follows from (2) and $(5)$ follows from (4) and this is a consequence of parts (2) and (3) of Proposition~\ref{p:c-1es01}.  
	
It remains to prove $(2)$. This follows from the  three Lemmas below.  
\end{proof}

It is worth noticing that, as a consequence of the above Theorem, any representation $\rho\in{\mathcal R}_1$ of $\Vir$ factorizes through a representation of $\Witt=\Vir/\langle K\rangle$ since $\rho(K)=0$.

The case $\rho: \Witt \hookrightarrow \Diff^1(\Cz)$, which corresponds to the condition $n_i=1$ for all $i\geq -1$,  has been exhaustively studied in \cite{Plaza-Opers}.

\begin{lem}\label{l:LiPoch}
Let $\mathfrak{g}=\Witt_>$, $V=\C(z), \Cz$ and $\rho\in {\mathcal{R}}_0$. Given $h(z), b(z)\in V$ with $h'(z)\neq 0$ and $c\in \C_{\mathrm{sl}}$, define $\rho(L_i)$ by the formulae~\eqref{e:Li012}.  

Then, $\rho$ is a homomorphism of Lie algebras and $\rho(L_i)$ coincides with the operators given in~\eqref{e:rhoexp} for $i=-1,0,1$.
\end{lem}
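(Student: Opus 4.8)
The plan is to reduce the homomorphism property to a single identity between scalar polynomials and then to prove that identity using the multiplicativity of the Pochhammer symbol. Write $L:=\rho(L_0)=-\frac{h(z)}{h'(z)}\partial+b(z)$. The backbone is the commutation relation $[h(z)^j,L]=j\,h(z)^j$ (a one-line computation, already used in the proof of Theorem~\ref{t:Un-Env-Al-sl1}), which is equivalent to the reordering rule $Q(L)\,h(z)^j=h(z)^j\,Q(L-j)$ valid for every rational function $Q$. Setting $Q_i(x):=(x+i\lambda)\,P(x-\lambda-i,i)$, formula~\eqref{e:Li012} reads $\rho(L_i)=h(z)^i\,Q_i(L)$, and applying the reordering rule to each product gives
\[
[\rho(L_i),\rho(L_j)]=h(z)^{i+j}\bigl(Q_i(L-j)\,Q_j(L)-Q_j(L-i)\,Q_i(L)\bigr).
\]
Hence, since $\rho(L_{i+j})=h(z)^{i+j}Q_{i+j}(L)$, the relation $[\rho(L_i),\rho(L_j)]=(i-j)\rho(L_{i+j})$ (trivial for $i=j$) is equivalent to the polynomial identity
\[
Q_i(x-j)\,Q_j(x)-Q_j(x-i)\,Q_i(x)=(i-j)\,Q_{i+j}(x)\qquad (i,j\ge -1).
\]

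The crux is to prove this identity, and the right device is to peel off the Pochhammer factor $g_k(x):=P(x-\lambda-k,k)$, so that $Q_k(x)=(x+k\lambda)\,g_k(x)$. Writing $g_k(x)=\prod_{m=1}^{k}(x-\lambda-m)$ for $k\ge 1$, $g_0=1$, and $g_{-1}=(x-\lambda)^{-1}$ (so that $Q_{-1}\equiv 1$), one has the multiplicativity $g_i(x-j)\,g_j(x)=g_{i+j}(x)=g_j(x-i)\,g_i(x)$, which is immediate from the telescoping of the products (equivalently from $g_k(x)=\Gamma(x-\lambda)/\Gamma(x-\lambda-k)$) and holds for all integers $i,j$. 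Substituting $Q_k=(x+k\lambda)g_k$ and using this multiplicativity, both products on the left carry the common factor $g_{i+j}(x)$, and the remaining scalar bracket collapses by the one-line expansion
\[
(x-j+i\lambda)(x+j\lambda)-(x-i+j\lambda)(x+i\lambda)=(i-j)\bigl(x+(i+j)\lambda\bigr).
\]
Therefore the left-hand side equals $(i-j)\,(x+(i+j)\lambda)\,g_{i+j}(x)=(i-j)\,Q_{i+j}(x)$, which proves the identity and hence that $\rho$ is a homomorphism of Lie algebras.

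It then remains to match~\eqref{e:Li012} with~\eqref{e:rhoexp} for $i=-1,0,1$, which is routine. For $i=0$ it is the definition of $L$. For $i=-1$ one has $Q_{-1}(x)=(x-\lambda)\,P(x-\lambda+1,-1)=(x-\lambda)(x-\lambda)^{-1}=1$, so $\rho(L_{-1})=h(z)^{-1}$. For $i=1$, $Q_1(x)=(x+\lambda)(x-\lambda-1)=x^2-x-\lambda(\lambda+1)$; since $\lambda\in\{c,-c-1\}$ gives $\lambda(\lambda+1)=c(c+1)$ in either case, $\rho(L_1)=h(z)\bigl(L^2-L-c(c+1)\bigr)$, exactly as in~\eqref{e:rhoexp}.

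I expect the only genuine difficulty to be the scalar identity of the second paragraph; once the factorization $Q_k=(x+k\lambda)g_k$ and the multiplicativity of $g_k$ are in place, the collapse is forced and everything else is bookkeeping.
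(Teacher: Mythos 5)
Your proof is correct and takes essentially the same route as the paper's: the paper likewise uses the reordering rule $Q(L)\,h(z)^j=h(z)^j\,Q(L-j)$ to write $[\rho(L_i),\rho(L_j)]$ as $h(z)^{i+j}$ times a difference of two Pochhammer products, and then collapses that difference to $(i-j)\bigl(L+(i+j)\lambda\bigr)P(L-\lambda-(i+j),i+j)$ "using their properties." Your factorization $Q_k(x)=(x+k\lambda)g_k(x)$ with the telescoping/Gamma-quotient multiplicativity of $g_k$, the resulting quadratic scalar identity, and the explicit check for $i=-1,0,1$ simply fill in the details the paper leaves implicit.
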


\begin{proof}
Note that $\rho(L_0)\circ h(z)^i = h(z)^i \circ (\rho(L_0)-i)$ and, thus, an analogous  commutation relation holds for polynomials in $\rho(L_0)$. 

For the sake of brevity, we denote $\rho(L_0)$ simply by $L$. Computing the Lie bracket explicitly in terms of Pochhammer symbols and using their properties, one gets:
	\begin{align*}
	[\rho(L_i) , & \rho(L_j) ]\, =\, 
	h(z)^{i+j}  \,\cdot 
	\\ 
	& \left( (L +i \lambda -j ) P(L -\lambda -i-j, i) (L +j\lambda ) P(L -\lambda-j, j)\right.
	\, - \\ &  -\,  
	\left.(L +j \lambda -i) P(L -\lambda -i-j, j) (L +i\lambda ) P(L -\lambda-i, i) \right)
	\,=\\ & = \, 
	(i-j) h(z)^{i+j} 	(L +(i+j) \lambda ) P(L -\lambda-(i+j), i+j)
	\end{align*}
and so the conclusion follows. 
\end{proof}

Observe that a morphism of Lie algebras from $\Witt_>$ to any Lie algebra is determined by its restriction to $\sl(2)$ and the image of $L_2$. Indeed, from \cite[\S2.3]{Plaza-Tejero} we know that given $\rho: \Witt_>\to {\mathfrak g}$, it holds that $\rho(L_{i+2})= \frac{(-1)^i}{i!} \ad(\rho(L_1))^i(\rho(L_2))$ for all $i>0$, where $\ad(M)(N):=[M,N]$. In the following two Lemmas, we use this fact.

\begin{lem}\label{l:constant}
Let $\mathfrak{g}=\Witt_>$, $V=\C(z), \Cz$ and $\rho,\rho'\in {\mathcal{R}}_0$.

If $r(\rho)= r(\rho')$, then, there exists $\alpha\in \C$ such that:
	\[
	\rho(L_2)= \rho'(L_2)+  \alpha h(z)^2
	\, .\]
\end{lem}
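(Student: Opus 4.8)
The plan is to study the single differential operator $D := \rho(L_2) - \rho'(L_2)$ and to show that the hypothesis $r(\rho) = r(\rho')$ forces it to be a multiple of $h(z)^2$. Since $r(\rho) = r(\rho')$ means that $\rho$ and $\rho'$ agree on $\sl(2) = \langle L_{-1}, L_0, L_1\rangle$, I set $M := \rho(L_{-1}) = \rho'(L_{-1})$ and $L := \rho(L_0) = \rho'(L_0)$. By Theorem~\ref{t:symbolorder} (with $n_{-1}=0$) one has $M = h(z)^{-1}$, a multiplication operator, while $L = -\frac{h(z)}{h'(z)}\partial + b(z)$.

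First I would extract two linear relations on $D$ from the fact that both $\rho$ and $\rho'$ are Lie homomorphisms. Applying each of them to the brackets $[L_{-1},L_2] = -3L_1$ and $[L_0,L_2] = -2L_2$, and subtracting the resulting identities (using that $\rho$ and $\rho'$ coincide on $L_{-1},L_0,L_1$), I obtain
\begin{equation*}
[M, D] = 0 \qquad\text{and}\qquad [L, D] = -2D.
\end{equation*}

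The key step is that the first relation already forces $D$ to have order $0$. Since $M$ is multiplication by the nonconstant function $h(z)^{-1}$, the condition $[M,D]=0$ is equivalent to $[h(z),D]=0$; writing $D = \sum_{j=0}^n d_j\partial^j$ and using $[h,\partial^j] = -\sum_{k\geq 1}\binom{j}{k}h^{(k)}\partial^{j-k}$, the coefficient of $\partial^{n-1}$ in $[h,D]$ equals $-n\,d_n\,h'$. As $h'(z)\neq 0$, this can vanish only if $n=0$, so $D = d_0$ for some $d_0\in V$. This is the heart of the argument and the only place the hypothesis $h'(z)\neq 0$ enters; it is the same symbol-comparison mechanism as in Proposition~\ref{p:comm-constant}, but note that here $D$ commutes only with $M$ rather than with all of $\rho(\g)$, so that proposition cannot be invoked directly.

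It then remains to solve the second relation for $d_0$. As $b(z)$ is a multiplication operator it commutes with $d_0$, so $[L,d_0] = -\frac{h}{h'}[\partial,d_0] = -\frac{h}{h'}d_0'$, and $[L,D]=-2D$ becomes the first-order equation $h\,d_0' = 2h'\,d_0$, i.e. $(d_0/h^2)'=0$. Since the kernel of $\partial$ on $V$ is exactly $\C$, I conclude $d_0 = \alpha\,h(z)^2$ for some $\alpha\in\C$, which is the claim. I do not anticipate any serious obstacle: the computation is short and explicit, and the one point requiring care is verifying that the commutator with $M$ annihilates every term of positive order.
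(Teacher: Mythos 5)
Your proof is correct and follows essentially the same route as the paper: the paper likewise sets $T:=\rho(L_2)-\rho'(L_2)$, derives the two relations $[\rho(L_{-1}),T]=0$ and $[\rho(L_0),T]=-2T$, uses $\rho(L_{-1})=h(z)^{-1}$ to force $\ord(T)=0$, and then integrates $-\frac{h}{h'}t'=-2t$ to get $t=\alpha h^2$. Your write-up merely makes explicit the symbol-comparison step and the derivation of the two commutator identities, which the paper leaves implicit.
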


\begin{proof}
Consider $T:=\rho(L_2)- \rho'(L_2)$, which is a differential operator of order at most  $2$. Then, $T$ satisfies:
	\begin{align*}
	[ \rho(L_{-1}), T] \, & =\, 0
	\\
	[ \rho(L_{0}), T] \, & =\, -2T
	\end{align*}
Since $\rho(L_{-1})= h(z)^{-1}$, the first identity implies that $T$ is a differential operator of order $0$; that is, $T=t(z)\in V$. 

The second identity yields:
	\[
	- \frac{h(z)}{h'(z)} t'(z) \, =\, -2 t(z)
	\]
and, thus, $t(z)= \alpha h(z)^2$ for some $\alpha\in \C$. 
\end{proof}

\begin{lem}\label{l:21}
Let $\mathfrak{g}=\Witt_>$, $V=\C(z), \Cz$ and $\rho\in {\mathcal{R}}_0$.

Then, there is at most another $\rho'\in {\mathcal{R}}_0$ such that $r(\rho)= r(\rho')$.
\end{lem}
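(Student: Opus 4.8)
The plan is to reduce the statement to counting the scalars $\alpha$ allowed by Lemma~\ref{l:constant} and to show that at most one \emph{nonzero} such $\alpha$ can arise. Let $\rho'\in{\mathcal R}_0$ satisfy $r(\rho')=r(\rho)$. Then $\rho'$ and $\rho$ have the same restriction to $\sl(2)$, hence the same triple $(h(z),b(z),c)$; in particular $\rho'(L_{-1})=\rho(L_{-1})=h(z)^{-1}$, $\rho'(L_0)=\rho(L_0)$ and $\rho'(L_1)=\rho(L_1)$, the last two being of order $1$ and $2$ respectively. By Lemma~\ref{l:constant} there is $\alpha\in\C$ with $\rho'(L_2)=\rho(L_2)+\alpha\,h(z)^2$. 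As recalled just before Lemma~\ref{l:constant}, a homomorphism of $\Witt_>$ is determined by its restriction to $\sl(2)$ together with the image of $L_2$; thus $\rho'\mapsto\alpha$ is an injection of the fibre $r^{-1}(r(\rho))$ into $\C$ sending $\rho$ itself to $0$. It therefore suffices to show that at most one value $\alpha\neq 0$ yields a genuine homomorphism.

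Next I would propagate the perturbation through the recurrence $\rho'(L_{i+2})=\frac{(-1)^i}{i!}\ad(\rho'(L_1))^i(\rho'(L_2))$. Since $\rho'(L_1)=\rho(L_1)$, linearity gives $\rho'(L_j)=\rho(L_j)+\alpha\,E_j$ for all $j\geq 2$, where $E_2=h(z)^2$, $E_3=-[\rho(L_1),h(z)^2]$ and, generally, $E_{i+2}=\frac{(-1)^i}{i!}\ad(\rho(L_1))^i(h(z)^2)$; note each difference $\rho'(L_j)-\rho(L_j)=\alpha E_j$ is \emph{linear} in $\alpha$. I would then impose the single relation $[\rho'(L_2),\rho'(L_3)]=-\rho'(L_5)$. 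Because $\rho$ already satisfies it, the $\alpha^0$ terms cancel; the $\alpha^1$ terms assemble into some fixed operator $M$ (coming from $[\rho(L_2),E_3]$, $[h(z)^2,\rho(L_3)]$ and $E_5$); and the only quadratic contribution is $[\alpha h(z)^2,\alpha E_3]=\alpha^2[h(z)^2,E_3]$. The relation thus collapses to $\alpha\bigl(M+\alpha N\bigr)=0$ with $N:=[h(z)^2,E_3]$.

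The crux of the argument is the nonvanishing of the quadratic coefficient $N$. By part~(1) of Proposition~\ref{p:c-1es01}, in ${\mathcal R}_0$ the operator $\rho(L_1)$ has order $2$, and by Theorem~\ref{t:symbolorder} (equivalently \eqref{e:rhoexp}) its symbol is $h(z)^3/h'(z)^2$. A short symbol computation then shows that $E_3=-[\rho(L_1),h(z)^2]$ is a first-order operator with symbol $-4h(z)^4/h'(z)$, and consequently $N=[h(z)^2,E_3]=8\,h(z)^5\neq 0$. This is precisely where the order-$2$ nature of $\rho(L_1)$ is essential: in ${\mathcal R}_1$ the operator $\rho(L_1)$ is of first order, every $E_j$ would be a function, and $N$ would vanish — which is exactly why $r\colon{\mathcal R}_1(\g)\to{\mathcal S}_1$ is a bijection rather than two-to-one. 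I expect verifying $N\neq 0$ to be the main technical point, though it reduces to a leading-symbol calculation.

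Finally, I would conclude by comparison. Suppose the fibre contained two elements $\rho',\rho''$ with nonzero parameters $\alpha'\neq\alpha''$. Each, being a homomorphism, satisfies $\alpha(M+\alpha N)=0$ with its own $\alpha$; dividing by the nonzero parameter gives $M+\alpha'N=0$ and $M+\alpha''N=0$, and subtracting yields $(\alpha'-\alpha'')N=0$, forcing $N=0$ against the previous step. Hence at most one nonzero value of $\alpha$ is admissible, so $r^{-1}(r(\rho))$ has at most two elements, which is the assertion of the Lemma.
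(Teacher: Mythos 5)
Your proposal is correct and follows essentially the same route as the paper: use Lemma~\ref{l:constant} to reduce the fibre $r^{-1}(r(\rho))$ to a single parameter $\alpha\in\C$ via $\rho'(L_2)=\rho(L_2)+\alpha h(z)^2$, propagate through the recurrence $\rho'(L_{i+2})=\tfrac{(-1)^i}{i!}\ad(\rho(L_1))^i(\rho'(L_2))$, and impose the single relation $[\rho'(L_2),\rho'(L_3)]=-\rho'(L_5)$, which collapses to a quadratic equation $\alpha(M+\alpha N)=0$ admitting the root $\alpha=0$, hence at most one further root. The only difference is that you explicitly verify the nondegeneracy $N=[h(z)^2,E_3]=8h(z)^5\neq 0$ (your leading-symbol computation is correct, using that $\rho(L_1)$ has order $2$ with symbol $h(z)^3/h'(z)^2$), whereas the paper merely asserts that the identity ``is a degree $2$ equation in $\alpha$''; this check is a genuine, if small, improvement, since without it the equation could a priori vanish identically and bound nothing.
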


\begin{proof}
Having in mind Lemma~\ref{l:constant}, one should determine those $\alpha\in \C$ such that:
	\begin{align*}
	& \rho'(L_{i}):= \rho(L_i) \quad \text{for }i=-1,0,1
	\\
	& \rho'(L_{i+2}):= \frac{(-1)^i} {i!} \ad(\rho(L_1))^i 
	\left(\rho(L_2)+ \alpha h(z)^2\right) 
	\quad \text{for }i\geq 0
	\end{align*}
define a map of Lie algebras. 

The first condition that must be imposed is $[\rho'(L_2) , \rho'(L_3) ] 
= - \rho'(L_5)$. Using the definition of $\rho'$ and the fact that $\rho$ is a map of Lie algebras, this identity is equivalent to: 
	\begin{multline*}
	[  \alpha h(z)^2  , \rho(L_3) ] +
	[\rho(L_2) , - \ad(\rho(L_1))\left( \alpha h(z)^2 \right)] + \\ +
	[  \alpha h(z)^2 , - \ad(\rho(L_1))\left( \alpha h(z)^2\right)] 
	\,=\, -\frac16 \ad(\rho(L_1))^3 \left( \alpha h(z)^2\right)
	\end{multline*}
which is a degree $2$ equation in $\alpha$. A solution of this equation is $\alpha=0$ which corresponds to $\rho$. Hence the other solution, if it exists, yields the desired map $\rho'$. 
\end{proof}

\subsection{Differential Operators on the Line} Now, we study the case of representations as operators on $V= \Czz$.

\begin{thm}\label{t:Witt>-line}
	It holds that the set:
	\[ 
	\left\{\begin{gathered}  \rho\in \Hom_{\text{Lie-alg}}( \Witt_{>} , \Diff( \Czz )) \\  
	\text{ s.t. $\rho(L_0)$ is of first order} \end{gathered} \right\} 
	\]
consists of those $\rho$, that up to the action of $\SGL_{\Czz}( \Czz)$, are of the following types:	
	\begin{enumerate}
		\item 	
		$\ord(\rho(L_i))=1$ for all $i\geq -1$ and  $\rho(L_i)$ is given by~\eqref{e:Lihcb} for a triple  $(a+z,0,c)\in\mathcal T_1$; 
		\item $\ord(\rho(L_i))=i+1$ for all $i\geq -1$  and  $\rho(L_i)$ is given by~\eqref{e:Li012} for $\lambda\in\C$ and  a triple $(h(z),b(z),c)\in\mathcal T_0$
		as follows:
		 \begin{itemize}
		 	\item $(z^{-1},1,0)$ with $\lambda=-1$;
		 	\item $(z^{-1},\frac12,-\frac12)$ with $\lambda=-\frac12$;
		 	\item $(z^{-1},0,0)$ with $\lambda=0$;
			\item $(z^{-1},c+1,c)$ with $\lambda=c$, $c\in \C_{\mathrm{sl}}$;
			\item $(z^{-1},-c,c)$ with $\lambda=-c-1$, $c\in \C_{\mathrm{sl}}$;
			\item $((z+a)^{-1},0,c)$ with $\lambda\in\{c,-c-1\}$, $c\in \C_{\mathrm{sl}}$, $ a\in\C^*$;
		 \end{itemize}
	\end{enumerate}	
\end{thm}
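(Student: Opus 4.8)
The plan is to feed the global description of $\mathcal R(\Witt_>)$ from Theorem~\ref{t:Witt>} into the two $\Czz$-classifications of $\sl(2)$-representations already obtained, and then to impose the single new condition that \emph{every} $\rho(L_i)$, $i\geq-1$, has coefficients in $\Czz$. First I would note that, by \eqref{e:DiffC[[z]]}, any $\rho$ in the stated set lies in $\mathcal R(\Witt_>)$, and since $\mathcal R_2(\Witt_>)=\emptyset$ (Theorem~\ref{t:Witt>}) we have $\rho\in\mathcal R_1(\Witt_>)\sqcup\mathcal R_0(\Witt_>)$; these two strata will produce types (1) and (2). Throughout I would use that conjugation by $\SGL_{\Czz}(\Czz)$ commutes with restriction to $\sl(2)$ and with the extension formulae \eqref{e:Lihcb} and \eqref{e:Li012}, so that one may first normalize $\rho\vert_{\sl(2)}$ and then read off the constraints on the extension from the transformation rules \eqref{e:Aut-act} and \eqref{e:homot-act}.

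\textbf{The case $\mathcal R_1$.} Here $\rho$ is given on all of $\Witt_>$ by \eqref{e:Lihcb} for a single triple $(h,b,c)$, whose symbol is $h^{i+1}(-h')^{-1}$ (Theorem~\ref{t:symbolorder}). Requiring this symbol to lie in $\Czz$ for every $i\geq-1$ translates, via \eqref{e:DiffC[[z]]}, into $(i+1)\nu(h)\geq\nu(h')$ for all $i\geq-1$; the instance $i=-1$ forces $\nu(h')\leq0$ and the limit $i\to\infty$ forces $\nu(h)\geq0$, leaving exactly $(\nu(h),\nu(h'))=(0,0)$ or $(1,0)$. When $\nu(h)=0$ I would use an automorphism \eqref{e:Aut-act} to put $h=a+z$ with $a\in\C^*$ and then a homothety \eqref{e:homot-act} to kill $b$, reaching the triple $(a+z,0,c)$; when $\nu(h)=1$ the image-preserving translation $h\mapsto h+a$ (the shift $\rho^a$ from the proof of Theorem~\ref{t:sl2-C[[z]]}, which extends to $\Witt_>$ since the latter is closed under polynomial reparametrization) reduces to the previous normal form. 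This yields type (1).

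\textbf{The case $\mathcal R_0$.} Now $\rho\vert_{\sl(2)}\in\mathcal S_0$, and Theorem~\ref{t:sl2-line} normalizes it, up to $\SGL_{\Czz}(\Czz)$, to $h=z^{-2}$, $h=z^{-1}$, or $h=(a+z)^{-1}$. I would first rule out $h=z^{-2}$: by Theorem~\ref{t:symbolorder} the symbol of $\rho(L_2)$ is $h^{5}(-h')^{-3}=\tfrac18 z^{-1}\notin\Czz$, so this type admits no extension to $\Witt_>$ inside $\Diff(\Czz)$. For $h=(a+z)^{-1}$ with $a\in\C^*$, every power $h^i$ is a unit of $\Czz$, so \eqref{e:Li012} automatically lands in $\Diff(\Czz)$ for either $\lambda\in\{c,-c-1\}$ and with $b=0$, giving the family $((z+a)^{-1},0,c)$. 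The remaining value $h=z^{-1}$ is where the real work lies.

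\textbf{The main obstacle: $h=z^{-1}$.} Here $\rho(L_0)=z\partial+b$ scales $z^n$ by $n+b$, and \eqref{e:Li012} gives $\rho(L_i)=z^{-i}(\rho(L_0)+i\lambda)\,P(\rho(L_0)-\lambda-i,i)$ for $i\geq1$; applying this to $z^n$ yields $z^{n-i}$ times the scalar $(n+b+i\lambda)\prod_{k=1}^{i}(n+b-\lambda-k)$. Thus $\rho(L_i)\in\Diff(\Czz)$ for all $i$ exactly when this scalar vanishes for every $i\geq1$ and every $0\leq n\leq i-1$, and the heart of the proof is the combinatorial analysis of this vanishing under the $\sl(2)$-constraints $b\in\{-c,c+1\}$ and $\lambda\in\{c,-c-1\}$. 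I expect the outcome to be: for generic $c\in\C_{\mathrm{sl}}$ the vanishing can only come from the Pochhammer factor, forcing $b-\lambda=1$ and hence the pairs $(c+1,c)$ and $(-c,-c-1)$; at $c=0$ the extra linear factor $(n+b+i\lambda)$ absorbs the leftover term, admitting also $(b,\lambda)=(1,-1)$ and $(0,0)$; and at $c=-\tfrac12$ all choices collapse to the single pair $(\tfrac12,-\tfrac12)$ with $\lambda=-\tfrac12$. Pinning down exactly which admissible pairs $(b,\lambda)$ annihilate the scalar for all $(i,n)$—and confirming that no pole survives in the lower-order terms—is the one genuinely delicate computation; everything else reduces to bookkeeping of the $\SGL$-action on triples recorded in \eqref{e:Aut-act} and \eqref{e:homot-act}.
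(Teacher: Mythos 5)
Your proposal follows the same skeleton as the paper's proof: embed everything in $\Diff(\Cz)$ via \eqref{e:DiffC[[z]]}, split the set as $\mathcal R_1(\Witt_>)\sqcup\mathcal R_0(\Witt_>)$ using Theorem~\ref{t:Witt>}, normalize $\rho\vert_{\sl(2)}$ by $\SGL_{\Czz}(\Czz)$ via the $\Czz$-classifications, and then impose that the extension has coefficients in $\Czz$. In the $\mathcal R_1$ stratum you are in fact \emph{more} careful than the paper, whose proof claims that $(i+1)\nu(h)-\nu(h')\geq 0$ for all $i\geq -1$ leaves only $\nu(h)=\nu(h')=0$: as you observe, $(\nu(h),\nu(h'))=(1,0)$ also survives, and it is genuinely realized (take $(h,b,c)=(z,c,c)$, i.e.\ the density representation $\rho(L_i)=-z^{i+1}\partial+(i+1)c\,z^i$, which lies in the set being classified). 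Be aware, however, that your way of absorbing this case — the shift $\rho\mapsto\rho^a$ — is \emph{not} conjugation by $\SGL_{\Czz}(\Czz)$: it is precomposition with an automorphism of $\Witt_>$, which preserves the image but changes the homomorphism. Since the actions \eqref{e:Aut-act} and \eqref{e:homot-act} leave $\nu(h)$ invariant, the triples $(z,c,c)$ with $c\neq 0$ are not conjugate to any $(a+z,0,c)$; they only have conjugate images. So your argument proves the statement in the sense in which Theorem~\ref{t:sl2-C[[z]]} is phrased (classification of images); under the literal reading of the present theorem, type (1) would have to be enlarged by the triples $(z,c,c)$. Either way, your treatment exposes a real inaccuracy in the paper's own proof of case (1).

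In the $\mathcal R_0$ stratum with $h=z^{-1}$ your setup is correct — the condition is precisely the vanishing of $(n+b+i\lambda)\prod_{k=1}^{i}(n+b-\lambda-k)$ for all $i\geq 1$ and $0\leq n\leq i-1$ — and your ``expected outcome'' coincides exactly with the paper's list, including the exceptional values $c=0$ and $c=-\tfrac12$. But this, the decisive computation, is left undone, and that is the one genuine gap in the proposal. The paper closes it with a finite check: since $\rho(L_{i+2})=\tfrac{(-1)^i}{i!}\ad(\rho(L_1))^i(\rho(L_2))$, preservation of $\Czz$ by $\rho(L_{-1}),\rho(L_0),\rho(L_1),\rho(L_2)$ already implies it for all $\rho(L_i)$, so only $\rho(L_2)$ must be examined; writing $L=z\partial+b$, the requirement becomes two explicit polynomial equations $A(b,\lambda)=B(b,\lambda)=0$, solved in the four cases $(b,\lambda)\in\{-c,c+1\}\times\{c,-c-1\}$. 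Your infinite family of conditions can also be handled directly (necessity already follows from $i=2$, and sufficiency is easy since for $b=\lambda+1$ the Pochhammer factor always contains a zero), but one of the two routes must actually be carried out; if you do check all $i$ by hand, you should either verify the vanishing for every $(i,n)$ or borrow the paper's generation trick to make the check finite.
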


\begin{proof}
Bearing in mind the inclusion $\Diff^k(\Czz)\hookrightarrow \Diff^k(\Cz)$, we may apply Theorem~\ref{t:Witt>} to $\rho$ and conclude that the set of the statement is identified with:
		\[ 
	\left\{\begin{gathered}  \rho\in \Hom_{\text{Lie-alg}}({\mathfrak{g}}, \Diff(\Cz)) \text{ s.t. $\rho(L_0)$ is of first order}
	\\  
	\text{and $\rho(L_i)(\Czz)\subseteq \Czz$ for $i=-1,0,1,2$} \end{gathered} \right\} 
	\]


$(1)$. If $\rho\in {\mathcal{R}_1}$ or, what is tantamount, $\ord(\rho(L_i))=1$ for all $i\geq -1$, equation ~\eqref{e:DiffC[[z]]} and Theorem~\ref{t:symbolorder} imply that $(i+1)\nu(h(z))-\nu(h'(z))\geq 0$ for all $i\geq -1$. The only possibility is $\nu(h(z))=\nu(h'(z))=0$. Therefore, acting by a suitable automorphism and a homothety, we may assume that $h(z)=a+z$ with $a\in\C^*$ and $b(z)=0$. 

$(2)$. If $\rho\in {\mathcal{R}_0}$ or, $\ord(\rho(L_i))=i+1$ for all $i\geq -1$, we may apply Theorem~\ref{t:sl2-line} to $\rho\vert_{\sl(2)}$ and obtain three cases. Therefore, it remains to impose the condition $\rho(L_2)(\Czz)\subseteq \Czz$. For this goal we recall the triples associated to $\rho\vert_{\sl(2)}$ by Theorem~\ref{t:sl2-line} and the explicit expression for $\rho(L_2)$ given by \eqref{e:Li012}: 
	\[ 
	\begin{aligned}
	\rho(L_2)=h(z)^{2}(\rho(L_0)+2\lambda)(\rho(L_0)-\lambda-2)(\rho(L_0)-\lambda-1)\, = \\ = \,
	h(z)^{2}\left( L^3 - 3L^2 +(2-3\lambda-3\lambda^2)L + 4\lambda +6 \lambda^2 +2\lambda^3\right)
	\end{aligned}
	\]  
with $L:=\rho(L_0)$ and $\lambda$ is equal to $c$ or $-c-1$.

In the first case, the triple was $(z^{-2}, \frac14,-\frac14)$.  Since $L=\frac12 z\partial +\frac14$, the highest order term in $\partial$ in $\rho(L_2)$  is $\frac18 h(z)^2z^3\partial^3=\frac18z^{-4} z^3\partial^3=\frac18 z^{-1}\partial^3$. 
So, there is none of these representations. 

For the second case, the associated triple  is $(z^{-1},b,c)$, with $b=-c$ or $b=c+1$.  Since $L=z\partial +b$, the desired condition is equivalent to saying that the free term $A(b,\lambda)$ and the coefficient $B(b,\lambda)$ of $z \partial$ in $\rho(L_2)$ vanish.  A straightforward computation shows 
	\begin{align*}A(b,\lambda)&=-3 b^2+b^3+4 \lambda+6 \lambda^2+2 \lambda^3+b\left(-3 \lambda^2-3 \lambda+2 \right),\\ 
	B(b,\lambda)&=3 (b^2- b- \lambda^2- \lambda).
\end{align*}

Now, there are four possibilities:

\begin{enumerate}
\item $b=-c$, $\lambda=c$.  One has: $$A(-c,c)=2 c \left(2 c^2+3 c+1\right),\quad B(-c,c)=0.$$ Hence $c\in\{-1,-\frac12,0\}$, but since $c\in\C_{\mathrm{sl}}$, the only possible values are $c=-\frac12$, $c=0$, that correspond, respectively,  to $(z^{-1},\frac12,-\frac12)$ with $\lambda=-\frac12$ and $(z^{-1},0,0)$ with $\lambda=0$.

\item $b=-c$, $\lambda=-c-1$. One has $$A(-c,-c-1)=0,\quad B(-c,-c-1)=0.$$ Hence there are no restrictions on $c\in \C_{\mathrm{sl}}$. This corresponds to the triple $(z^{-1},-c,c)$ with $\lambda=-c-1$ .

\item $b=c+1$, $\lambda=c$. One has $$A(c+1,c)=0,\quad B(c+1,c)=0.$$ Hence there are no restrictions on $c\in \C_{\mathrm{sl}}$. This corresponds to the triple $(z^{-1},c+1,c)$ with $\lambda=c$ .

\item $b=c+1$, $\lambda=-c-1$. One has $$A(c+1,-c-1)=-2 c \left(2 c^2+3 c+1\right),\quad B(c+1,-c-1)=0.$$ As in case 1), the only  possible values for $c\in \C_{\mathrm{sl}}$ are $-\frac12$ and $0$. They correspond, respectively,  to the triples $(z^{-1},\frac12,-\frac12)$ with $\lambda=-\frac12$ and $(z^{-1},1,0) $ with $\lambda=-1$.
\end{enumerate}

%

Finally, the last case given in Theorem~\ref{t:sl2-line} corresponds to the triple $((a+z)^{-1},0,c)\in\mathcal T_0$ with $a\in\C^*$ and since $h(z)=(a+z)^{-1}$ is invertible and $\rho(L_0)=(a+z)\partial$, it follows immediately that $\rho(L_i)$ given by \eqref{e:Li012} preserves $\C[[z]]$.
\end{proof}

\begin{thm}
	Let ${\mathfrak g}=\Witt, \Vir$. It holds that the set:
	\[ 
	\left\{\begin{gathered}  \rho\in \Hom_{\text{Lie-alg}}( {\mathfrak g} , \Diff( \Czz )) \\  
	\text{ s.t. $\rho(L_0)$ is of first order} \end{gathered} \right\} 
	\]
is in bijection with the triples  $(h(z),b(z),c)\in\mathcal T_1$ where $h(z),b(z)\in  \Czz$ and $h'(z)\in\Czz^*$ through expression~\eqref{e:Lihcb}.
\end{thm}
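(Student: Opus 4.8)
The plan is to reduce the classification for $\g=\Witt,\Vir$ over $\Czz$ to the already-established results for $\Witt_>$ over $\Czz$ (Theorem~\ref{t:Witt>-line}) together with the structural restriction theorem over $\Cz$ (Theorem~\ref{t:Witt>}). First I would note that, via the inclusion $\Diff^k(\Czz)\hookrightarrow\Diff^k(\Cz)$ of~\eqref{e:DiffC[[z]]}, any $\rho$ in the stated set lands in $\mathcal R(\g)$ over $V=\Cz$, so Theorem~\ref{t:Witt>} applies. By part~(5) of that theorem, $\mathcal R_0(\g)$ and $\mathcal R_2(\g)$ are empty for $\g=\Witt,\Vir$; hence necessarily $\rho\in\mathcal R_1(\g)$, meaning $\ord(\rho(L_i))=1$ for all $i$, and $\rho$ is given by the explicit formula~\eqref{e:Lihcb} for a triple $(h(z),b(z),c)\in\mathcal T_1$. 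In the $\Vir$ case the central element satisfies $\rho(K)=0$ by part~(1), so the representation factors through $\Witt$ and the two cases coincide.

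Next I would pin down exactly which triples occur. Since $\rho\in\mathcal R_1$, its restriction to $\Witt_>$ lies in $\mathcal R_1(\Witt_>)$, which by Theorem~\ref{t:Witt>-line}(1) forces, after acting by $\SGL_{\Czz}(\Czz)$, the normal form $h(z)=a+z$ with $a\in\C^*$ and $b(z)=0$; more intrinsically, the condition $\rho(L_i)(\Czz)\subseteq\Czz$ for all $i\geq -1$ is equivalent to the membership condition $h(z),b(z)\in\Czz$ together with $\tfrac{-1}{h'(z)}\in\Czz$, i.e. $h'(z)\in\Czz^*$. The key point is that for $\g=\Witt$ (and $\Vir$) we must impose stability under $\Czz$ for \emph{all} $i\in\Z$, in particular for the negative modes $L_{-i}$ with $i\geq 2$; so I would check that, given $h(z),b(z)\in\Czz$ with $h'(z)\in\Czz^*$, the operators $\rho(L_i)$ of~\eqref{e:Lihcb} automatically preserve $\Czz$ for every $i$, including negative ones. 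This is where the geometry of $\mathcal R_1$ helps: because $h'(z)\in\Czz^*$ (so $-1/h'(z)\in\Czz$) and $h(z)\in\Czz$, each coefficient $-h(z)^{i+1}/h'(z)$ and $(b(z)+ic)h(z)^i$ lies in $\Czz$ for $i\geq 0$, and the delicate direction is $i<0$.

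The main obstacle, therefore, is the negative modes. For $i\leq -1$ the factor $h(z)^i=h(z)^{-|i|}$ need not lie in $\Czz$ unless $h(z)$ is invertible in $\Czz$, i.e. $h(0)\neq 0$. I would resolve this by exploiting that the Chevalley involution $\Theta$ identifies $\Witt_<$ with $\Witt_>$ and that $\rho$ restricted to $\Witt_<$ must equally preserve $\Czz$; applying Theorem~\ref{t:Witt>-line}(1) to $\rho^\Theta\in\mathcal R_1(\Witt_>)$ and translating back via Remark~\ref{r:Chevalley} (where $h^\Theta=-1/h$) shows that $-1/h(z)$ must also have the normal form $a'+z$, forcing $h(z)$ itself to be invertible in $\Czz$. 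Combined with the $\Witt_>$ constraint this yields precisely $h'(z)\in\Czz^*$ and $h(z)$ a unit, and then every coefficient in~\eqref{e:Lihcb} lies in $\Czz$ for all $i\in\Z$. Conversely, any triple $(h(z),b(z),c)\in\mathcal T_1$ with $h(z),b(z)\in\Czz$ and $h'(z)\in\Czz^*$ produces, via~\eqref{e:Lihcb}, a genuine $\Witt$-homomorphism into $\Diff^1(\Czz)$ (the bracket relations hold in $\Diff(\Cz)$ by Theorem~\ref{t:Witt>} and the coefficients stay in $\Czz$), with $\rho(K)=0$ extending it to $\Vir$. This establishes the asserted bijection.
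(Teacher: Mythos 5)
Your overall route is the paper's own: the paper proves this theorem in one line, as a consequence of Theorems~\ref{t:Witt>} and~\ref{t:Witt>-line}, and your first paragraph (emptiness of ${\mathcal R}_0(\g)$ and ${\mathcal R}_2(\g)$, the formula~\eqref{e:Lihcb}, $\rho(K)=0$, hence $\Witt$ and $\Vir$ give the same answer) spells out exactly that reduction. Your handling of the negative modes via the Chevalley involution --- $\rho^\Theta\vert_{\Witt_>}$ must preserve $\Czz$, its triple is $(-1/h,-b,c)$ by Remark~\ref{r:Chevalley}, hence $-1/h\in\Czz$ and $h$ is a unit --- is correct and is the real content of the passage from $\Witt_>$ to the full $\Witt$.

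The gap is in your converse, and it contradicts your own forward step. Take $(h(z),b(z),c)=(z,0,1)\in\mathcal T_1$: it satisfies $h,b\in\Czz$ and $h'=1\in\Czz^*$, yet~\eqref{e:Lihcb} gives
\[
\rho(L_{-2})\,=\,-z^{-1}\partial-2z^{-2}\,\notin\,\Diff(\Czz)\, ,
\]
so this triple does \emph{not} produce a homomorphism into $\Diff(\Czz)$, and the bijection cannot hold for the set of triples as you describe it. What your forward argument actually proves is that the triples which occur satisfy the stronger condition $h\in\Czz^*$ (that is, $h(0)\neq 0$) together with $h'\in\Czz^*$ and $b\in\Czz$; for \emph{that} set the converse is immediate, since $h^i\in\Czz^*$ for every $i\in\Z$ puts all coefficients of~\eqref{e:Lihcb} in $\Czz$. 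So the two halves of your proof parametrize different sets, and the argument only closes up after replacing ``$h(z)\in\Czz$'' by ``$h(z)\in\Czz^*$'' (which is the only reading under which the correspondence~\eqref{e:Lihcb} is even well defined into $\Diff(\Czz)$). The same overreach occurs in your intermediate claim that, for $\Witt_>$ alone, preservation of $\Czz$ is \emph{equivalent} to $h,b\in\Czz$, $h'\in\Czz^*$: this ignores the order-zero coefficient $(b(z)-c)\,h(z)^{-1}$ of $\rho(L_{-1})$, which, when $h(0)=0$, imposes the extra condition $b(0)=c$; indeed the triple $(z,c,c)$ of Theorem~\ref{t:sl2-C[[z]]} extends to a legitimate $\Witt_>$-representation on $\Czz$ with $h$ not a unit, and is excluded here only by the negative modes of $\Witt$.
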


\begin{proof}
It is a consequence of Theorems~\ref{t:Witt>} and~\ref{t:Witt>-line}.
\end{proof}

%
%


%
%
%

%
\section{Final Remarks}\label{sec:remarks}

Let us finish this paper by addressing how our results connect with a bunch of topics such as enveloping algebras,  the ${\mathcal W}_{1+\infty}$-algebra, simple $\Vir$-modules (\cite{Iohara-Koga}), Virasoro constraints (\cite{Givental}), simple $\sl(2)$-modules (\cite{Block}) and polynomial solutions to differential equations (\cite{Turbiner}). 

\subsection{Enveloping Algebras and ${\mathcal W}$-algebras}\label{subsec:env-alg-w}

In this subsection we apply the above results to show how the universal enveloping algebras relate to the algebra of differential operators. 


\begin{prop}\label{p:UWitt>}
Let $\rho:\Witt_> \to {\mathbb C}[z][\partial]$ be given as in~\eqref{e:Lihcb}. Assume that  $\rho\vert_{\sl(2)}$ is associated to a triple $(h(z)=z,b(z),c)\in\mathcal T_1$ where $b(z)\in \C[z]$. 

If the Casimir operator of $r(\rho)=\rho\vert_{\sl(2)}$ is not $1$, then:
	\[
	\rho:{\mathcal U}(\Witt_>) \to {\mathbb C}[z][\partial] 
	\]
is surjective (here ${\mathcal U}$ denotes the universal enveloping algebra).
\end{prop}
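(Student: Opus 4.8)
The plan is to identify the target $\C[z][\partial]$ with the (first) Weyl algebra $W$, acting faithfully on $\C[z]$, and to show that both $z$ and $\partial$ lie in $\operatorname{Im}(\rho)$; since these two elements generate $W$, surjectivity follows. First I would normalize the representation. Because $\rho(L_{-1})=-\partial+(b(z)-c)z^{-1}$ must lie in $\C[z][\partial]$, one has $b(0)=c$, so $\beta(z):=(b(z)-c)/z\in\C[z]$. Composing $\rho$ with the automorphism $\Psi$ of $W$ determined by $\Psi(z)=z$ and $\Psi(\partial)=\partial+\beta(z)$ (which preserves the Weyl relation, hence is an automorphism, and therefore preserves surjectivity onto $W$) replaces the triple $(z,b(z),c)$ by $(z,c,c)$. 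Thus I may assume $b(z)=c$, in which case \eqref{e:Lihcb} becomes $\rho(L_i)=-z^{i+1}\partial+c(i+1)z^i$, acting on the monomial basis by $\rho(L_i)z^n=(c(i+1)-n)\,z^{n+i}$.

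Next I would exploit the $\Z$-grading of $W$ in which $z$ has degree $1$ and $\partial$ has degree $-1$: each $\rho(L_i)$ is then homogeneous of degree $i$, so $\operatorname{Im}(\rho)$ is a graded subalgebra and it suffices to locate $z$ and $\partial$ inside it. Here $\partial=-\rho(L_{-1})$ is immediate, and the degree-zero part is settled at once, since $z\partial=c-\rho(L_0)\in\operatorname{Im}(\rho)$ generates all polynomials in $z\partial$. The heart of the argument is therefore to produce $z$ itself.

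To this end I would analyse the degree-one part $\operatorname{Im}(\rho)_1$ through the assignment sending a homogeneous degree-one operator $z^n\mapsto p(n)z^{n+1}$ to its symbol $p\in\C[n]$. Right multiplication by polynomials in $z\partial$ multiplies this symbol by the corresponding polynomial in $n$, so the set of symbols occurring in $\operatorname{Im}(\rho)_1$ is an ideal of $\C[n]$. It contains the symbol $2c-n$ of $\rho(L_1)$ and the symbol $-n(3c-n+1)$ of $\rho(L_2)\rho(L_{-1})$. These two polynomials are coprime precisely when $2c\notin\{0,3c+1\}$, that is, when $c\neq 0,-1$; by Theorem~\ref{t:repr-sl2} this is exactly the hypothesis $\rho(\mathcal C)=(2c+1)^2\neq 1$. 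Hence the ideal is all of $\C[n]$, so it contains the constant symbol $1$, and the degree-one operator with symbol $1$ is multiplication by $z$. Therefore $z\in\operatorname{Im}(\rho)$, and together with $\partial$ this gives $\operatorname{Im}(\rho)=\C[z][\partial]$.

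The main obstacle is precisely this last step — the manufacture of a degree-one symbol that is \emph{not} divisible by $2c-n$ — and it is the only place where the hypotheses intervene. This is what forces the use of the generator $L_2$, which is unavailable in $\sl(2)$, and what makes the coprimality condition $c\neq0,-1$ necessary. It also clarifies why $\mathcal U(\sl(2))$ only embeds into, rather than fills, the differential operators (cf.\ Theorem~\ref{t:Un-Env-Al-sl1}): without the extra generator one cannot break the divisibility by $2c-n$ in degree one.
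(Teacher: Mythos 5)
Your proof is correct, but it takes a genuinely different route from the paper's. The paper works directly with the given triple $(z,b(z),c)$ and, for \emph{every} $k\geq 0$, produces $z^k$ in the image in one stroke: writing $L=\rho(L_0)$, the three products $\rho(L_{-1})\rho(L_{k+1})$, $\rho(L_0)\rho(L_k)$, $\rho(L_1)\rho(L_{k-1})$ each equal $z^k$ times a quadratic polynomial in $L$, and these three quadratics are linearly independent precisely when $c\neq 0,-1$, so the constant polynomial $1$ --- hence $z^k$ itself --- lies in their span; afterwards $\partial$ is recovered from $\rho(L_{-1})$. You instead normalize first, eliminating $b(z)$ by the triangular automorphism $\partial\mapsto\partial+\beta(z)$ (a composition of the Dixmier generators $\Phi_{n,\alpha}$ that the paper recalls in its last section), and then run a graded argument: the image is a graded subalgebra for the Euler grading, the symbols of its degree-one part form an ideal of $\C[n]$, and coprimality of $2c-n$ (from $\rho(L_1)$) and $-n(3c-n+1)$ (from $\rho(L_2)\rho(L_{-1})$) --- again exactly the condition $c\neq 0,-1$ --- produces the single element $z$, which together with $\partial=-\rho(L_{-1})$ generates the Weyl algebra. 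Both proofs thus hinge on the same obstruction, Casimir $\neq 1$, but in different guises: linear independence of three quadratics versus coprimality of two polynomials in $\C[n]$. Your version makes the role of the hypothesis and of the generator $L_2$ very transparent, uses only $L_{-1},L_0,L_1,L_2$, and makes explicit the constraint $b(0)=c$ that the paper leaves implicit; the paper's version avoids the preliminary automorphism and yields all the elements $z^k$ by a uniform formula, which is the mechanism it then reuses verbatim for the companion propositions (the $\mathcal T_0$ case and Proposition~\ref{p:UWitt}), where your normalization would have to be redone.
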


\begin{proof}
Let  us prove that $z^k = h(z)^k \in  \rho({\mathcal U}(\Witt_>))$ for all $k\geq 0$. Let us denote $\rho(L_0)$ by $L$. By \eqref{e:Lihcb}, one has $\rho(L_k)= h(z)^k (L+k c)$,  and taking into account that $L\circ h(z) = h(z)\circ (L-1)$, we get:
	\[
	\begin{aligned}
	\rho(L_{-1})\rho(L_{k+1}) & = 
	h(z)^k (L-c-k-1)(L+(k+1)c)  
	\\
	\rho(L_{0})\rho(L_{k}) & = 
	h(z)^k (L-k) (L+k c)   
	\\
	\rho(L_{1})\rho(L_{k-1}) & = 
	h(z)^k (L+c-k+1) (L+(k-1)c)   
	\end{aligned}
	\]
for any $k\geq 0$. We observe that  $ (L-c-k-1)(L+(k+1)c), (L-k) (L+k c) , (L+c-k+1) (L+(k-1)c)$ are linearly independent as polynomials in $L$ if and only if $c\neq 0,-1$. By Theorem~\ref{t:Un-Env-Al-sl1}, it follows that when the Casimir operator of $\rho\vert_{\sl(2)}$, which  is given by $ (2c+1)^2$, is different from $1$, we get:
	\[ 
	z^k =h(z)^k \,\in \, \rho({\mathcal U}(\Witt_>)) \qquad \forall k\geq 0
	\]
since it can be obtained as a linear combination of the three operators above. Recalling that $\rho(L_{-1})= \partial + b(z)$, one has that $\partial\in  \rho({\mathcal U}(\Witt_>))$, and  the statement  follows. 
\end{proof}


\begin{prop}
Let $\rho:\Witt_> \to {\mathbb C}[z][\partial]$ be given as in~\eqref{e:Li012}. 

If $\rho\vert_{\sl(2)}$ is associated to a triple $(\frac1z,b(z),c)\in\mathcal T_0$, then $b(z)\in \C[z]$ and  $b(0)^2 - b(0)-c(c+1)=0$. 

If the Casimir operator of $\rho\vert_{\sl(2)}$ is not $1$, then $\rho:{\mathcal U}(\Witt_>) \to {\mathbb C}[z][\partial] $ is surjective. 
\end{prop}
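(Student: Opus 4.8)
The plan is to prove the two assertions in turn. For the first one I would simply write out the operators attached to the triple $(\tfrac1z,b(z),c)\in\mathcal T_0$ through \eqref{e:rhoexp}. Since $h(z)=\tfrac1z$ gives $-\tfrac{h}{h'}=z$, one has $\rho(L_0)=z\partial+b(z)$, so membership in $\C[z][\partial]$ already forces $b(z)\in\C[z]$; and a direct expansion yields
\[
\rho(L_1)=\tfrac1z\bigl(\rho(L_0)^2-\rho(L_0)-c(c+1)\bigr)=z\partial^2+2b(z)\partial+b'(z)+\frac{b(z)^2-b(z)-c(c+1)}{z},
\]
whose zeroth order coefficient lies in $\C[z]$ if and only if $z$ divides $b^2-b-c(c+1)$, that is $b(0)^2-b(0)-c(c+1)=0$. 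This settles the first claim.

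For the surjectivity I would reduce everything to producing $z$ and $\partial$ inside the image $\rho(\mathcal U(\Witt_>))$, since these two operators generate $\C[z][\partial]$ as a $\C$-algebra. One generator is free: $\rho(L_{-1})=h^{-1}=z$, so $\C[z]\subseteq\rho(\mathcal U(\Witt_>))$, and $\rho(L_0)=L:=z\partial+b\in\rho(\mathcal U(\Witt_>))$, whence $\C[L]\subseteq\rho(\mathcal U(\Witt_>))$ as well. The whole difficulty is to manufacture $\partial$. Writing \eqref{e:Li012} as $\rho(L_i)=z^{-i}Q_i(L)$ with $Q_i(L):=(L+i\lambda)P(L-\lambda-i,i)\in\C[L]$, and using the commutation rule $Lz^{-1}=z^{-1}(L-1)$, I would first observe that
\[
\rho(L_{-1})^{m-1}\rho(L_m)=z^{m-1}\cdot z^{-m}Q_m(L)=z^{-1}Q_m(L)\in\rho(\mathcal U(\Witt_>))\qquad(m\geq 1).
\]
Consequently the set $J:=\{\,p\in\C[L]\ :\ z^{-1}p(L)\in\rho(\mathcal U(\Witt_>))\,\}$ is an ideal of $\C[L]$ (right multiplication by $q(L)$ sends $z^{-1}p(L)$ to $z^{-1}(pq)(L)$, again in the image), and it contains both $Q_1$ and $Q_2$.

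The key computation is then the greatest common divisor of $Q_1$ and $Q_2$: since $\C[L]$ is a principal ideal domain, $(Q_1,Q_2)=(\gcd(Q_1,Q_2))\subseteq J$. A short calculation shows that $L=\lambda+1$ is a root of every $Q_m$ (the last factor $L-\lambda-1$ of the Pochhammer symbol vanishes there), so $L-\lambda-1$ divides $\gcd(Q_1,Q_2)$, while it divides $Q_1$; the degree jumps to $2$ only if $Q_1\mid Q_2$, i.e. if the second root $-\lambda$ of $Q_1=(L+\lambda)(L-\lambda-1)$ also kills $Q_2$, which amounts to $Q_2(-\lambda)=2\lambda(\lambda+1)(2\lambda+1)=0$, that is $\lambda\in\{0,-1,-\tfrac12\}$. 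Translating through $\lambda\in\{c,-c-1\}$ this is $c\in\{0,-1,-\tfrac12\}$, and the borderline value $c=-\tfrac12$ (where $Q_1$ has a double root, not divided by the simple root of $Q_2$) is dispatched by hand, the gcd staying of degree one. Hence, precisely when the Casimir $(2c+1)^2\neq 1$, one gets $\gcd(Q_1,Q_2)=L-\lambda-1$ and therefore $z^{-1}(L-\lambda-1)\in\rho(\mathcal U(\Witt_>))\subseteq\C[z][\partial]$. Finally $z^{-1}(L-\lambda-1)=\partial+\tfrac{b(z)-\lambda-1}{z}$, and its very membership in $\C[z][\partial]$ forces the fraction to be a polynomial; subtracting it (it lies in $\C[z]\subseteq\rho(\mathcal U(\Witt_>))$) exhibits $\partial$ in the image, which together with $z$ proves surjectivity.

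The main obstacle is conceptual rather than computational: unlike Proposition~\ref{p:UWitt>}, here $\rho(L_{-1})$ supplies $z$ and not $\partial$, so $\partial$ must be extracted by a genuine cancellation. The right device is to recognize that an operator of the shape $z^{-1}(\text{degree-one polynomial in }L)$ is exactly $\partial$ up to a multiplication operator, and that producing such an element is governed by the ideal generated by the Pochhammer polynomials $Q_1,Q_2$; pinning down that this gcd drops to degree one exactly when the Casimir avoids the value $1$ is the crux, and it is the analogue of the linear independence argument used in Proposition~\ref{p:UWitt>}.
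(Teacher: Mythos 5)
Your proof is correct, and while your treatment of the first claim coincides with the paper's (both read the constraints off the coefficient of $\rho(L_0)$ and the free term of $\rho(L_1)$), your surjectivity argument takes a genuinely different route. The paper never manufactures $\partial$ itself: for every $k\geq 1$ it forms the three products $\rho(L_{-1})\rho(L_{k+1})$, $\rho(L_0)\rho(L_{k})$, $\rho(L_1)\rho(L_{k-1})$, each of the shape $h(z)^k\,(\text{monic quadratic in }L)\,P(L-\lambda-k,k)$, observes that the three quadratics are linearly independent exactly when $\lambda\neq 0,-1$ (equivalently, the Casimir is not $1$), and thus obtains $h(z)^kP(L-\lambda-k,k)\in\operatorname{Im}\rho$, an operator of order $k$ with symbol $1$; together with $\C[z]\subseteq\operatorname{Im}\rho$ this exhausts $\C[z][\partial]$ by induction on the order. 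You instead extract $\partial$ in one shot: your set $J=\{p\in\C[L]\,:\,z^{-1}p(L)\in\operatorname{Im}\rho\}$ is an ideal of $\C[L]$ containing $Q_1=(L+\lambda)(L-\lambda-1)$ and $Q_2=(L+2\lambda)(L-\lambda-2)(L-\lambda-1)$, whose gcd drops to the linear factor $L-\lambda-1$ precisely when $\lambda\neq 0,-1$ (your separate check at $\lambda=-\frac12$, where $Q_1$ has a double root, is the one delicate spot, and you handle it correctly); then $z^{-1}(L-\lambda-1)=\partial+\frac{b(z)-\lambda-1}{z}$ lies in $\operatorname{Im}\rho\subseteq\C[z][\partial]$, which forces $b(0)=\lambda+1$, and subtracting the resulting multiplication operator exhibits $\partial\in\operatorname{Im}\rho$; since $z$ and $\partial$ generate the Weyl algebra, surjectivity follows. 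Each route buys something. The paper's linear-independence device treats all orders uniformly and is the same mechanism it reuses for $\C[z,z^{-1}][\partial]$ and, as remarked there, for $\C(z)$ and $\Cz$, where ``generated by $z$ and $\partial$'' is no longer the right endgame. Your B\'ezout argument is more economical: it uses only $\rho(L_{-1}),\rho(L_0),\rho(L_1),\rho(L_2)$ and a single gcd computation in a PID, and it delivers the sharper by-product $b(0)=\lambda+1$ (not merely $b(0)\in\{\lambda+1,-\lambda\}$, which is all the quadratic constraint gives), in agreement with the classification of Theorem~\ref{t:Witt>-line}.
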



\begin{proof}
Note that $\rho(L_0)(\C[z])\subseteq \C[z]$, implies that $b(z)\in \C[z]$. Recalling the expression of the free term of $\rho(L_1)$ from the proof of Theorem~\ref{t:repr-sl2}, one obtains the constraint  $b(0)^2 - b(0)-c(c+1)=0$.
	
Note that $\rho(L_{-1})=z\in \rho({\mathcal U}(\Witt_>))$ and, thus $\C[z]\subseteq \rho({\mathcal U}(\Witt_>))$. It remains to show that for each $k\geq 1$ there exists an operator in $ \rho({\mathcal U}(\Witt_>))$ of order $k$ and symbol $1$. 

If we  denote $\rho(L_0)$ by $L$, recalling the expression~\eqref{e:Li012} for $\rho(L_i)$ and the computations of the proof of Lemma~\ref{l:LiPoch}, we get:
	\[
	\begin{aligned}
	\rho(L_{-1})\rho(L_{k+1}) & = 
	h(z)^k (L-\lambda-k-1)(L+(k+1)\lambda) P(L-\lambda-k,k) 
	\\
	\rho(L_{0})\rho(L_{k}) & = 
	h(z)^k (L-k) (L+k\lambda ) P(L-\lambda-k,k) 
	\\
	\rho(L_{1})\rho(L_{k-1}) & = 
	h(z)^k (L+\lambda-k+1) (L+(k-1)\lambda) P(L-\lambda-k,k) 
	\end{aligned}
	\]
for any $k\geq 0$. We observe that  $ (L-\lambda-k-1)(L+(k+1)\lambda), (L-k) (L+k \lambda) , (L+\lambda-k+1) (L+(k-1)\lambda)$ are linearly independent as polynomials in $L$ if and only if $\lambda\neq 0,-1$. Recalling Theorem~\ref{t:Un-Env-Al-sl1}, it follows that when the Casimir operator of $\rho\vert_{\sl(2)}$, which  is given by $(2c+1)^2=(2\lambda+1)^2$ since either $\lambda=c$ or $\lambda=-c-1$, is different from $1$, we get:
	\[ 
	h(z)^k P(L-\lambda-k,k) \,\in \, \rho({\mathcal U}(\Witt_>)) \qquad \forall k\geq 0
	\]
since it can be obtained as a linear combination of the three operators above. A straightforward computation shows that $h(z)^k P(L-\lambda-k,k)$ is a differential operator  of order $k$ and symbol $1$.
\end{proof}

The same arguments as in  Proposition~\ref{p:UWitt>} prove the following result. 

\begin{prop}\label{p:UWitt}
Let $\rho:\Witt \to {\mathbb C}[z,z^{-1}][\partial]$ be given as in~\eqref{e:Lihcb}  by a triple $(h(z)=z^{\pm 1},b(z),c)\in\mathcal T_1$ where $c\in \C$ and $b(z)\in \C[z,z^{-1}]$.

If the Casimir operator of $\rho\vert_{\sl(2)}$ is not $1$, then:
	\[
	\rho:{\mathcal U}(\Witt) \to {\mathbb C}[z,z^{-1}][\partial] 
	\]
is surjective (here ${\mathcal U}$ denotes the universal enveloping algebra).
\end{prop}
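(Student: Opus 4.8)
The plan is to mirror the proof of Proposition~\ref{p:UWitt>}, the only genuine novelty being that the full Witt algebra supplies generators $L_i$ for every $i\in\Z$, so one can reach \emph{all} powers of $z$ — positive and negative — rather than only the nonnegative ones. First I would set $L:=\rho(L_0)$ and record from~\eqref{e:Lihcb} that $\rho(L_i)=h(z)^i(L+ic)$, together with the commutation rule $L\circ h(z)^k=h(z)^k\circ(L-k)$ valid for all $k\in\Z$, exactly as in the proof of Lemma~\ref{l:LiPoch}. Since the Casimir of $\rho\vert_{\sl(2)}$ equals $(2c+1)^2$, the hypothesis that it is not $1$ is precisely the condition $c\neq 0,-1$.

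Next, for each fixed $k\in\Z$ I would form the three products $\rho(L_{-1})\rho(L_{k+1})$, $\rho(L_0)\rho(L_k)$ and $\rho(L_1)\rho(L_{k-1})$ and, using the commutation rule, rewrite them as $h(z)^k$ times the quadratics
\begin{align*}
&(L-c-k-1)(L+(k+1)c), \quad (L-k)(L+kc), \\
&(L+c-k+1)(L+(k-1)c),
\end{align*}
respectively, exactly as in Proposition~\ref{p:UWitt>}. These three polynomials in $L$ are linearly independent over $\C$ precisely when $c\neq 0,-1$; hence, under our hypothesis, a suitable $\C$-linear combination of the three products equals $h(z)^k\cdot 1=h(z)^k$. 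As these products lie in $\rho({\mathcal U}(\Witt))$ and the image is a subring, we obtain $h(z)^k\in\rho({\mathcal U}(\Witt))$ for \emph{every} $k\in\Z$. Since $h(z)=z^{\pm1}$, the family $\{h(z)^k\}_{k\in\Z}$ exhausts all powers $z^m$, $m\in\Z$, so that $\C[z,z^{-1}]\subseteq\rho({\mathcal U}(\Witt))$.

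Then I would extract $\partial$. The symbol of $\rho(L_i)$ is $-h(z)^{i+1}/h'(z)$, which is a nonzero constant for a suitable index $i$ ($i=-1$ when $h=z$, giving symbol $-1$; $i=1$ when $h=z^{-1}$, giving symbol $1$); for that $i$ one has $\rho(L_i)=(\text{const})\,\partial+\xi(z)$ with $\xi(z)\in\C[z,z^{-1}]$ already in the image, whence $\partial\in\rho({\mathcal U}(\Witt))$. Finally, because $\C[z,z^{-1}][\partial]$ is generated as a $\C$-algebra by $\C[z,z^{-1}]$ together with $\partial$, and both lie in the subring $\rho({\mathcal U}(\Witt))$, surjectivity follows.

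The computations are entirely parallel to the $\Witt_>$ case, so I do not expect a real obstacle; the only points meriting care are (i) confirming that the linear-independence criterion for the three quadratics is insensitive to the sign of $k$, so that negative powers of $z$ are produced by the very same mechanism as positive ones, and (ii) checking that for the distinguished index $i$ the lower-order term $\xi(z)$ of $\rho(L_i)$ is a genuine Laurent polynomial, so that subtracting it keeps us inside the image.
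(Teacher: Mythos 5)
Your proposal is correct and is essentially the paper's own argument: the paper proves Proposition~\ref{p:UWitt} by invoking verbatim "the same arguments as in Proposition~\ref{p:UWitt>}", which is exactly what you carry out, with the right observation that for the full Witt algebra the index $k$ ranges over all of $\Z$ so the mechanism produces every power $h(z)^k$, $k\in\Z$, and hence all of $\C[z,z^{-1}]$ (the determinant of the three quadratics is $-2c(c+1)$ independently of $k$, confirming your point (i)). Your extraction of $\partial$ from the index $i$ with constant symbol, using that the lower-order term lies in $\C[z,z^{-1}]\subseteq\rho({\mathcal U}(\Witt))$, likewise matches the paper's closing step.
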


\begin{rem}
Proposition~\ref{p:UWitt} shows that ${\mathbb C}[z,z^{-1}][\partial]$  can be obtained as a quotient of the universal enveloping algebra of the Witt algebra by a certain ideal. This was known for particular choices of $\rho:\mathcal U(\Witt)\to {\mathbb C}[z,z^{-1}][\partial] $ (see \cite{Pope}). It is not difficult to generalize Proposition~\ref{p:UWitt} to  the cases of $\C(z)$ and $\Cz$. Making use of the results of \cite{Plaza-Tejero} as well as our \S\ref{subsec:env}, one can explicitly compute that ideal.
\end{rem}

%



\subsection{${\mathcal W}_{1+\infty}$-algebra} 
The ${\mathcal W}_{1+\infty}$-algebra, also known as $\widehat{\mathcal D}$ in the literature, is the universal central extension of ${\mathbb C}[z,z^{-1}][\partial]$. Indeed from \cite{GF}, it is known that the Lie algebra ${\mathbb C}[z,z^{-1}][\partial]$ has a unique, up to isomorphism, central extension whose cocycle, discovered by Gelfand and Fuks, is given by:
	\[
	\Psi_{\mathrm{GF}}(f(z)\partial^m, g(z)\partial^n) 
	\,:=\, 
	\frac{m! n!}{(m+n+1)!} \operatorname{Res}_{z=0}(\partial^{n+1} f(z)) (\partial^m g(z))d z\, .
	\]

Now, let $\rho:\Witt\to {\mathbb C}[z,z^{-1}][\partial]$ be given by a triple $(h(z),b(z),c)\in\mathcal T_1$ as in \eqref{e:Lihcb}. It follows that $\rho^*\Psi_{\mathrm{GF}}$, which yields a $2$-cocycle of $\Witt$, has to be proportional to the  cocycle $\Psi$ given by equation~\eqref{eq:Vir-co}. Recalling \cite[Theorem 2.14]{Plaza-Sigma}, we obtain:
	\[
	\rho^*\Psi_{\mathrm{GF}} 
	 \,=\, -2(1- 6 c+ 6 c^2)v(h(z)) \Psi
	\,=\, (1- 3\rho( {\mathcal{C}}))v(h(z)) \Psi
	\]
where $v$ is the valuation of $\C[z,z^{-1}]$ defined by $z$ and $\rho({\mathcal{C}})$ is the Casimir operator of $\rho\vert_{\sl(2)}$ (see Theorem~\ref{t:Casimir}). Hence, mapping the central element $K$ of $\Vir$ to the central element of ${\mathcal W}_{1+\infty}$, we conclude that $\rho$ induces a map:
	\[
	\widehat \rho: \Vir \, \longrightarrow\, {\mathcal W}_{1+\infty}\, . 
	\]

Conversely, assume we are given a representation $\tau:\Vir \to {\mathcal W}_{1+\infty}$ that maps the central element to the central element and such that the differential operator part of $\tau(L_0)$ is of order $1$. After quotienting by the central element, we get a map $\bar\tau: \Witt\to {\mathbb C}[z,z^{-1}][\partial]$ that must acquire the form \ref{e:Lihcb}. It is straightforward to check that $\tau= \widehat{\bar\tau}$. 

On the one hand, consider the subalgebra $V'=\C[z,z^{-1}]$ of $V=\C(z)$.  Then, the parameters of Theorem~\ref{t:Witt>} can be chosen such that $\rho(V')\subseteq V'$. For instance, set $(h(z)= z, b(z)=\beta, c=\alpha)\in\mathcal T_1$ with $\alpha,\beta\in \C$. Then, the restriction to $V'$ of the representation $\rho$ defined by \eqref{e:Lihcb} acquires the form:
	\[
	\rho(L_i)(z^n)\,=\, (-z^{i+1}\partial +(\alpha i +\beta)z^i)(z^n)\,=\,  (\alpha i + \beta -n) z^{n+i}
	\]
which is the expression of the $\Vir$-modules of the intermediate series (see \cite[\S1.2.6]{Iohara-Koga}). 

\subsection{Instances of $\Witt_>$ in the literature} 
The differential operators $\{L_i\vert i\geq -1\}$ considered by Givental in \cite[\S3]{Givental} are precisely those defined by the representation $\rho\in  {\mathcal R}_0(\Witt_{>})$ associated to the triple $(z, -\frac12,\frac12)\in\mathcal T_0$ by \eqref{e:Li012}.


In the case $V'=\C(q)$, we set $\lambda=-1$ and $b(q)=q t(q)$ with $t(q)\in \C(q)$ with poles in $\alpha_1,\cdots,\alpha_j$, then $\rho$ induces an action on $\C[q, (q-\alpha_1)^{-1},\ldots, (q-\alpha_j)^{-1}]$. This is the case of \cite[\S7.1]{Block}. 

Similarly, consider the natural embedding of $\C[z]$ in $\C(z)$ or in $\Cz$. If $n$ is a non-negative integer, set $(h(z)=z,b(z)=c,c=\frac{n}2)\in\mathcal T_1$. The first three operators of the representation~\eqref{e:Lihcb} read as follows:
	$$ \rho(L_{-1}) \,= \,   -\partial,\quad \rho(L_0)\,  = \,  -z \partial+ \frac{n}2,\quad \rho(L_1)\,  = \,  -z^2 \partial + n z, $$
and they  generate the Lie algebra $\langle J_n^+, J_n^0, J_n^{-} \rangle$ considered in \cite[Equation~(8)]{Turbiner}.


\subsection{Examples with $n_0>1$}

Let us exhibit instances of representations where $n_0>1$. 

\begin{prop}
Let a polynomial $b(z)\in\C[z]$ and a constant $c\in\C$ be given. Then, the map $\widehat\rho : \Witt_>\to \Diff(\C[q,q^{-1}])$ defined by:
	\[
    \widehat\rho(L_i)= 
    \left(-q^2\partial_q\right)^{i+1}\circ\left(\frac{1}{q}\right)  +\left(b\left(-q^2\partial_q\right) + ic\right)\circ \left(-q^2\partial_q\right)^i
    \, .\]
is a representation of  $\Witt_>$.
\end{prop}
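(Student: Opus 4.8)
The plan is to realize $\widehat\rho$ as the image of the standard representation \eqref{e:Lihcb} under a homomorphism of Weyl algebras, so that bracket-preservation is inherited for free. Set $X:=-q^2\partial_q$ and let $Y$ be the operator of multiplication by $1/q$, both viewed inside $\Diff(\C[q,q^{-1}])$. A one-line computation gives $[X,Y]=1$: for any $f$ one has $X(f/q)-(1/q)(Xf)=(-qf'+f)-(-qf')=f$. Thus $X$ and $Y$ generate a copy of the first Weyl algebra, and the assignment $z\mapsto X$, $\partial\mapsto -Y$ respects the relation $[\partial,z]=1$, since $[-Y,X]=[X,Y]=1$. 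Hence it extends to an algebra homomorphism $\Phi\colon\C[z][\partial]\to\Diff(\C[q,q^{-1}])$.

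Next I would recall from Theorem~\ref{t:Witt>} that, for the triple $(z,b(z),c)\in\mathcal T_1$, formula \eqref{e:Lihcb} defines a genuine Lie algebra homomorphism $\rho\colon\Witt_>\to\Diff(\C(z))$ with $\rho(L_i)=-z^{i+1}\partial+(b(z)+ic)z^i$. For $i\geq 0$ these operators lie in $\C[z][\partial]$, and applying $\Phi$ term by term yields
\[
\Phi(\rho(L_i))=-X^{i+1}(-Y)+(b(X)+ic)X^i=X^{i+1}Y+(b(X)+ic)X^i=\widehat\rho(L_i).
\]
Because an algebra homomorphism is automatically a homomorphism for the commutator bracket, the composite $\Phi\circ\rho$ preserves all brackets among the $L_i$ with $i\geq 0$; that is, $\widehat\rho$ restricted to the subalgebra generated by $\{L_i: i\geq 0\}$ is already a Lie algebra homomorphism.

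The step I expect to be the main obstacle is the case $i=-1$, together with the verification that $\widehat\rho$ genuinely takes values in $\Diff(\C[q,q^{-1}])$ and not merely in some localization. The difficulty is that $\rho(L_{-1})=-\partial+(b(z)-c)z^{-1}$ involves $z^{-1}$, which lies outside the domain $\C[z][\partial]$ of $\Phi$, and the naive substitution produces the ill-defined symbol $X^{-1}=(-q^2\partial_q)^{-1}$. I would resolve this by checking directly that $\widehat\rho(L_{-1})$ is a bona fide differential operator: evaluating the prescribed operator on the basis $\{q^n\}_{n\in\Z}$ and confirming that, for each fixed shift of the exponent, the resulting coefficient is polynomial in $n$ (equivalently, that the potentially singular contribution $(b(X)-c)X^{-1}$ actually lands in $\C\langle X,Y\rangle$). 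Once $\widehat\rho(L_{-1})$ is known to be a well-defined operator obeying the stated formula, the remaining brackets $[\widehat\rho(L_{-1}),\widehat\rho(L_j)]=(-1-j)\widehat\rho(L_{j-1})$ follow by the same Weyl-algebra bookkeeping.

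Should one prefer to bypass the homomorphism $\Phi$ entirely, an equivalent and fully self-contained route is a direct bracket computation modelled on Lemma~\ref{l:LiPoch}: expanding $[\widehat\rho(L_i),\widehat\rho(L_j)]$ using only the relation $[X,Y]=1$ and the commutation rule $YX^k=X^kY-kX^{k-1}$, and collecting terms, one obtains $(i-j)\widehat\rho(L_{i+j})$. This is more computational but makes the role of $[X,Y]=1$ transparent and treats all indices on the same footing.
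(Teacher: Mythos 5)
Your strategy is the same as the paper's: the paper defines precisely your homomorphism $\Phi$ (motivated there by the Fourier transform, via $q=-1/(2\pi{\mathbf i}\xi)$), sending $z^n\mapsto(-q^2\partial_q)^n$ and $\partial^n\mapsto(-1/q)^n$, asserts that it is a homomorphism of Lie algebras, and obtains $\widehat\rho$ by applying it to the representation $\rho$ of~\eqref{e:Lihcb} attached to the triple $(z,b(z),c)\in\mathcal T_1$. Your explicit verification of $[X,Y]=1$ and the computation $\Phi(\rho(L_i))=\widehat\rho(L_i)$ for $i\geq 0$ simply make precise what the paper leaves implicit, so up to that point the two arguments coincide.

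The divergence is at $i=-1$, which you rightly single out as the main obstacle --- but your proposed resolution cannot be carried out in the stated generality. Writing $b(z)-c=z\tilde b(z)+(b(0)-c)$ with $\tilde b(z)\in\C[z]$, one has
\[
(b(X)-c)\circ X^{-1}\,=\,\tilde b(X)+(b(0)-c)X^{-1},
\]
and $X^{-1}$ is genuinely not a differential operator on $\C[q,q^{-1}]$: the operator $X=-q^2\partial_q$ annihilates $1$ and its image misses $q$ (on the basis one would need $q^m\mapsto -\tfrac{1}{m-1}\,q^{m-1}$, a coefficient which is not polynomial in $m$ and is undefined at $m=1$). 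Hence the ``confirmation'' you plan --- that $(b(X)-c)X^{-1}$ lands in $\C\langle X,Y\rangle$ --- succeeds if and only if $b(0)=c$; the same wall is hit by your alternative direct-bracket route, since for $b(0)\neq c$ the operator $\widehat\rho(L_{-1})$ is not even defined. This constraint is not an artifact of your method: it is exactly the condition under which $\rho(L_{-1})=-\partial+(b(z)-c)z^{-1}$ has coefficients in $\C[z]$, i.e.\ under which the formula of the Proposition defines an element of $\Diff(\C[q,q^{-1}])$ at all for $i=-1$. The paper's proof assumes it tacitly (it applies the transform to ``the representation $\rho$ on $\C[z]$''), and every instance worked out in the paper has $b(z)-c=0$. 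So to complete your argument you should add the hypothesis $b(0)=c$ (equivalently $z\mid b(z)-c$); then $\rho(L_i)\in\C[z][\partial]$ for every $i\geq -1$, so $\widehat\rho=\Phi\circ\rho$ on the whole of $\Witt_>$, with $\widehat\rho(L_{-1})=1/q+\tilde b(X)$, and bracket preservation is inherited from $\Phi$ exactly as in your $i\geq 0$ case, with no separate verification needed.
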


Note that $\widehat\rho(L_i) \in  \operatorname{Diff}(\C[q,q^{-1}] )$  is a differential operator of order $ \operatorname{deg} b(z) +i $. 

\begin{proof}
For the proof, we start with the case of \S\ref{sec:Witt} and mimic  the Fourier transform.  Recall that the Fourier transform of a function $f(z)$ is given  by the formula:
	\[
	\widehat f(\xi)  \,:=\,  \int_{-\infty}^{\infty} f(z)   e^{-2\pi {\mathbf i} z\xi} dz
	\]
and, accordingly, the following expressions hold:
	\begin{align*}
	\left( \frac{d^n}{dz^n} f(z) \right)^{\widehat{\,}} \,&  =\, (2\pi {\mathbf i} \xi)^n \widehat f(\xi)
	\\
	\left( z^n f(z) \right)^{\widehat{\,}} \,&   =\, \left(\frac{{\mathbf i} }{2\pi }\right)^n \frac{d^n \widehat f(\xi)}{d\xi^n}  
	\end{align*}
	
In our setting, let   $q=\frac{-1}{2\pi{\mathbf i}\xi}$ and, accordingly, it holds that  $-q^2\partial_q = \frac{{\mathbf i} }{2\pi }\partial_{\xi}$. Inspired by the Fourier transform, we define a map from differential operators acting on $\C[z]$ to differential operators acting on $\C[q,q^{-1}]$ as follows:
	\begin{align*}
	\operatorname{Diff}(\C[z]) \, \longrightarrow & \, \operatorname{Diff}(\C[q,q^{-1}])
	\\
	\partial^n \, \longmapsto & \, \left(\frac{-1}{q}\right)^n 
	\\
	z^n   \, \longmapsto & \,  (-q^2\partial_q)^n 
	\end{align*}
that is a homomorphism of Lie algebras.

Applying this transformation to the representation $\rho$ on $\C[z]$, given by the expression \eqref{e:Lihcb} associated to a triple $(h(z)=z, b(z),c)\in\mathcal T_1$, we obtain the claimed $\widehat\rho$. 
\end{proof}

\begin{exam}
Computing $\rho(L_i)$ ($i=-1,0,1$) for $\rho$ associated to the triple $(h(z)=z, b(z)=-c,-c)\in\mathcal T_1$ given by \eqref{e:Lihcb}, we obtain:
	\begin{align*}
	\rho(L_{-1}) = -\partial  \qquad & \qquad \widehat\rho(L_{-1}) =  q^{-1}
	\\
	\rho(L_{0}) = -z \partial - c   \qquad & \qquad  \widehat\rho(L_0) =  -q\partial_q - c +1
	\\
	\rho(L_{1}) = -z^2\partial - 2c z \qquad & \qquad \widehat\rho(L_1) =   q^3\partial_q^2 + 2c q^2 \partial_q
	\end{align*}
that corresponds to  the triple $(h(q)=q, b(q)= 1-c,\lambda)\in\mathcal T_0$ in equation~\eqref{e:Li012} where $\lambda\in\{-c, c-1\}$ is the unique element  that belongs to $\C_{\mathrm{sl}}$. This relation has already appeared in the literature on Conformal Field Theory;  see, for instance,  \cite{Ribault}, equations~2.3.5 and 4.2.23.
\end{exam}

Let us exhibit a general method to produce new representations out of those with $n_0=1$. Recall from \cite{Dixmier} that the automorphism group of the Weyl algebra $\C[z][\partial]$ is generated by $\Phi_{n,\alpha}, \Phi'_{n,\alpha}$ defined by:
	\[
	\begin{aligned}
	\Phi_{n,\alpha}(z) &\,=\, z 
	\\
	\Phi_{n,\alpha}(\partial) &\,=\, \alpha z^n + \partial
	\end{aligned}
	\qquad \qquad
	\begin{aligned}
	\Phi'_{n,\alpha}(z) &\,=\, z + \alpha \partial^n
	\\
	\Phi'_{n,\alpha}(\partial) &\,=\,  \partial
	\end{aligned}
	\]
where $n\in \Z_{\geq 0}$ and $\alpha\in \C$. 	As long as an automorphism $\Phi$ is defined on the image of $\rho:\Witt_>\to \C[z,z^{-1}][\partial]$, it holds that $\Phi\circ \rho$ is another representation. 

\begin{exam}
Let $\rho$ be as in~\eqref{e:rhoexp} defined by  $(h(z),  b(z), c)\in \mathcal T_0$ where:
	\[
	h(z)=\frac1z \quad , \quad 
	b(z)=\frac{1-d}2
	\quad  \quad    
	\]
with  $d\in \C$ and $c$ is equal to the unique element of $\{\frac{d-1}{2}, -\frac{1+d}{2}\}$ that belongs to $\C_{\mathrm{sl}}$. Let us consider $\Phi'_{1,-1}$. Then, it holds that:
	\[
	\begin{aligned}
	( \Phi'_{1,-1} \circ\rho)(L_{-1})  &\,=\, - \partial + z, 
	\\
	( \Phi'_{1,-1} \circ\rho)(L_{0})  &\,=\,  -\partial^2+z \partial + \frac{1-d}2 ,
	\\
	( \Phi'_{1,-1} \circ\rho)(L_{1})  &\,=\,   -\partial^3 + z \partial^2 +(1-d)\partial. 	\end{aligned}
	\]
This is the subalgebra of  $\C[z][\partial]$ considered in \cite[Equation~8.8]{Mulase}.
\end{exam}


\end{document}